\newtheorem{remark}{Remark}
\newtheorem{corollary}{Corollary}
\newtheorem{theorem}{Theorem}
\newtheorem{lemma}{Lemma}
\newtheorem{assumption}{Assumption}
\newtheorem{proposition}{Proposition}
\newtheorem{inverse_problem}{Inverse Problem}
\numberwithin{definition}{section}
\numberwithin{remark}{section}
\numberwithin{corollary}{section}
\numberwithin{theorem}{section}
\numberwithin{lemma}{section}
\numberwithin{assumption}{section}
\numberwithin{proposition}{section}
\numberwithin{inverse_problem}{section}
\numberwithin{forward_problem}{section}
\newcommand{\norm}[1]{\left\lVert #1 \right\rVert}
\newcommand{\R}{\mathbb{R}}
\newcommand{\N}{\mathbb{N}}
\newcommand{\cR}{\mathcal{R}}
\newcommand{\cJ}{\mathcal{J}}
\newcommand{\divergence}{\nabla \cdot}
\newcommand{\symgrad}{\hat{\nabla}}
\newcommand{\trace}{\operatorname{tr}}
\newcommand{\defeq}{\overset{\text{\tiny def}}{=}}
\newcommand\restr[2]{{% we make the whole thing an ordinary symbol
  \left.\kern-\nulldelimiterspace % automatically resize the bar with \right
  #1 % the function
  \vphantom{\big|} % pretend it's a little taller at normal size
  \right|_{#2} % this is the delimiter
  }}
\newcommand{\bu}{\mathbf{u}}
\newcommand{\bn}{\mathbf{n}}
\newcommand{\p}{\partial}
\newcommand{\vect}{\mathbf}
\newcommand{\mvect}{\boldsymbol}
\newcommand{\mat}{\mathbf}
\newcommand{\tens}{\mathbf}
\title{On Parameter Identification in Three-Dimensional Elasticity and Discretisation with Physics-Informed Neural Networks}
\author[1,2,3]{Federica Caforio}
\author[3,4]{Martin Holler}
\author[3,4]{Matthias Höfler}
\affil[1]{Department of Mathematics and Scientific Computing, University of Graz, NAWI Graz, Austria}
\affil[2]{Gottfried Schatz Research Center: Division of Biophysics, Medical University of Graz, Austria}
\affil[3]{BioTechMed-Graz, Austria}
\affil[4]{IDea\_Lab, University of Graz, Austria}
\date{}
\begin{document}

\maketitle

\begin{center}
\vspace{-1cm}
  Emails follow the format: \texttt{firstname.lastname@uni-graz.at}
\end{center}

\begin{abstract}
Physics-informed neural networks have emerged as a powerful tool in the scientific machine learning community, with applications to both forward and inverse problems. While they have shown considerable empirical success, significant challenges remain—particularly regarding training stability and the lack of rigorous theoretical guarantees, especially when compared to classical mesh-based methods.

In this work, we focus on the inverse problem of identifying a spatially varying parameter in a constitutive model of three-dimensional elasticity, using measurements of the system’s state. This setting is especially relevant for non-invasive diagnosis in cardiac biomechanics, where one must also carefully account for the type of boundary data available.

To address this inverse problem, we adopt an all-at-once optimisation framework, simultaneously estimating the state and parameter through a least-squares loss that encodes both available data and the governing physics. For this formulation, we prove stability estimates ensuring that our approach yields a stable approximation of the underlying ground-truth parameter of the physical system independent of a specific discretisation. We then proceed with a neural network-based discretisation and compare it to traditional mesh-based approaches. Our theoretical findings are complemented by illustrative numerical examples.

\end{abstract}

\section{Introduction}
Physics-informed neural networks (PINNs) have recently attracted significant attention within the scientific computing and machine learning communities and nowadays represent a relevant tool for the numerical solution of partial differential equations (PDEs). 
Despite their empirical success, as with many nonlinear parametrisations common in machine learning, rigorous theoretical guarantees regarding their convergence and generalisation remain limited.

Early contributions to the approximation theory of PINNs for forward problems include the works of \cite{de_ryck_error_2022} and \cite{mishra_estimates_2022}. 
These studies provide generalisation error bounds that can be reinterpreted as best-approximation errors with respect to the physical state.
This perspective is natural, given that the actual optimisation error is both difficult to quantify and inherently stochastic. 
A subsequent study by the same authors \cite{mishra_estimates_2023} extends these considerations to inverse problems, leveraging conditional stability estimates to derive generalisation bounds. The analysis in \cite{mishra_estimates_2023} mainly addresses unique continuation problems, which benefit from a rich literature on conditional stability (e.g., \cite{alessandrini_stability_2009}).

Nonetheless, many inverse problems of practical interest — particularly parameter identification — do not fall into the class of unique continuation problems. Instead, these involve estimating spatially varying parameters in PDE models. 
In principle, provided that suitable stability estimates exist, the same techniques can be applied to derive generalisation bounds.
However, it remains unclear how this translates not only into generalisation bounds of the state but also of the sought parameters.

Recent applications of PINNs to parameter identification have shown promising performance, not only in forward simulations but also in inverse formulations. 
In the context of forward problems, we refer, for instance, to \cite{wang_simulating_2025, bai_physics-informed_2023, doumeche_convergence_2023}. 
Applications to inverse problems in elasticity include \cite{kamali_elasticity_2023, haghighat_physics-informed_2021}, with particular emphasis on cardiac modelling, where PINNs have been employed to infer local mechanical properties of myocardial tissue \cite{caforio_physics-informed_2024, hofler_physics-informed_2025}. 
These advances motivate a systematic theoretical study of parameter identification in three-dimensional elasticity within the PINN framework.

To enable such analysis, we follow the theoretical pathway established by \cite{mishra_estimates_2023}, which requires a well-posed physical system admitting conditional stability estimates.
Consequently, we restrict our attention to certain classes of systems arising in linear, quasi-static, three-dimensional elasticity. 
Already in this setting of linear, quasi-static, three-dimensional elasticity, one cannot expect to have such stability estimates available for the general case \cite{isakov_weak_2010, lin_detecting_2018}. 
However, building on the stability results developed in \cite{lin_detecting_2018, di_fazio_stable_2017}, we aim at establishing theoretical properties for the inverse problem of identifying spatially varying parameters for these classes of constitutive laws.

In this work, we:
\begin{enumerate}
	\item 
    Extend the conditional stability estimates from \cite{lin_detecting_2018, di_fazio_stable_2017} to a form that is consistent with the theoretical framework presented in these works and incorporates practical considerations for identifying local material properties in in-vivo myocardial tissue — where direct access to the parameter (e.g., through pointwise or boundary measurements) is typically not available, and only indirect data such as boundary traction measurements can be used.
	\item Derive approximation-independent properties of the all-at-once optimisation problem, where the state and the parameter are jointly inferred.
	\item Establish generalisation (or discretisation) error estimates for the parameter of interest, following the approach of \cite{mishra_estimates_2022}.
	\item Provide a numerical comparison between the PINN-based method and finite element (FE)-based approaches, both in an all-at-once as well as in a reduced setting.
\end{enumerate}

\subsection{Basic notation}
Let $\Omega \subset \R^3$ denote the physical domain of interest, representing a bounded region with boundary $\p \Omega$. 
The inner part of $\Omega$ with a distance of at least $d>0$ to the boundary is denoted by $\Omega_d=\{x \in \Omega: \operatorname{dist}(x, \partial \Omega)>d\}$. 
We adopt the following conventions for notational clarity throughout this work: Scalars are denoted by regular lowercase letters, e.g., $t \in \R$, vectors are represented by bold lowercase letters, e.g., $\vect u \in \R^3$, matrices and tensors are written in bold uppercase letters, e.g., the identity matrix $\mat I \in \R^{3 \times 3}$. 
For a sufficiently smooth vector field $\vect u: \R^3 \longrightarrow \R^3$, we denote the gradient by $\nabla \vect u$, the symmetric gradient with $\symgrad \vect u \defeq \frac{1}{2}\left( \nabla \vect u + \nabla \vect u ^T \right)$, and its divergence with $\divergence \vect u$.
Similarly, for a sufficiently smooth tensor field $\tens U: \R^3 \longrightarrow \R^{3 \times 3}$, we denote the tensor divergence also by $\divergence \tens U$, which is applied row-wise.
Scalar products are denoted by $\vect u \cdot \vect v$, the tensor product by $\vect u \otimes \vect v$, and the Frobenius product by $\mat U : \mat V$.
We denote by $\delta_{ij}$ the delta function, which is 1 for the case $i=j$ and 0 otherwise.

The space of Lebesgue-integrable scalar functions where the $p$-th power is integrable is denoted by $L^p(\Omega, \R^3)$ for vector-valued functions and by $L^p(\Omega)$ for scalar functions. 
The space of functions whose $k$-th derivative is Lipschitz continuous is denoted by $\mathcal C^k(\Omega)$, whereas $\mathcal C^k(\bar \Omega)$ consists of functions $f \in \mathcal C^k(\Omega)$ whose derivatives can be continuously extended to $\bar \Omega$ (see e.g. \cite[Section 3.6]{alt_linear_2016}).
The Sobolev space of vector-valued functions with weak derivatives up to order $k$ in $L^p$ is denoted by $W^{k,p}(\Omega, \R^3)$, and for the special case of a Hilbert space structure by $H^k \defeq W^{k,2}$.

Strong convergence is denoted by an arrow, e.g., $\vect u_n \longrightarrow \vect u$, weak convergence by $\vect u_n \rightharpoonup \vect u$, and weak* convergence by $\vect u_n \overset{*}{\rightharpoonup} \vect u$.

We denote by $f(x)=\mathcal O(g(x))$ an asymptotic upper bound (in practice basically up to constant factors, i.e. $\norm{f} \leq C \norm{g}$ for suitably chosen norms) and by $g(x)=\Omega(f(x))$ an asymptotic lower bound (which can also be regarded as a bound up to constant factors).

\section{Stability}

We begin by presenting the conceptual framework adopted in this work, which comprises the governing equations of the physical system and the general strategy for addressing the associated inverse problem. Central to our approach is the formulation of an objective functional subject to minimisation. The central insight is that a stability property of the physical system can be systematically transferred to ensure stability within the corresponding optimisation formulation.

Throughout this article, we assume the following regularity on the domain.
\begin{assumption}[Regularity of the domain]\label{met:ass:domain}
    We assume that $\Omega \subset \R^3$ is a bounded domain with $\mathcal{C}^{1}$ boundary.
\end{assumption}

\subsection{Linear elasticity}
To model three-dimensional linear elasticity, we define the governing PDE system involving the ground-truth displacement field $\vect u^* \in H^1(\Omega, \R^3)$ that satisfies the overdetermined boundary value problem
\begin{equation}\label{the:eqn:ground-truth}
\begin{cases}
    \divergence\left(\tens C_{t^*} \nabla \vect u^* \right)=0 & \text { in } \Omega, \\ 
    \vect u^*=\vect g & \text { on } \partial \Omega, \\
    \tens C_{t^*} \nabla \vect u^* \cdot \vect n = \vect p & \text { on } \partial \Omega.
\end{cases}
\end{equation}
Here, $\tens C_{t^*}: \bar \Omega \longrightarrow \times_{i=1}^4\R^3$ denotes a parametrised elasticity tensor, depending on the scalar field $t^*: \bar \Omega \longrightarrow \R$. 
The boundary data — namely, the boundary displacement $\vect g \in H^1(\partial \Omega, \R^3)$ and traction $\vect p \in H^1(\partial \Omega, \R^3)$ — are assumed to be known, possibly up to measurement noise.
We consider two specific material models.

\textbf{Isotropic linear elasticity.} In this classical case, the elasticity tensor follows Hooke’s law
\begin{equation}\label{met:eqn:isotropic_law}
\tens C_{t^*} \nabla \vect u^* = \mathbb{C}_{\mu^*} \hat{\nabla} \mathbf{u}=\lambda(\nabla \cdot \mathbf{u}) \mathbf{I}+2 \mu^* \hat{\nabla} \mathbf{u}.
\end{equation}
Lamé's first parameter $\lambda: \bar \Omega \rightarrow \R$ is assumed to be known, and the goal is to recover the shear modulus $\mu^*: \bar \Omega \rightarrow \R$.

\textbf{Anisotropic linear elasticity.} We also consider a structured anisotropic model defined by
\begin{equation}\label{met:eqn:anisotropic_law}
    \mathbf{C}_{t^*}=\mathbf{L}+t^* \mathbf{R},    
\end{equation}
where $\mathbf{L}$ encodes the classical isotropic response via $\mathbf L \nabla \vect u = \mathbb C \symgrad \vect u$, and $\tens R$ represents a directional perturbation of the form $\tens R = (r_j \delta_{jl} \delta_{ik})_{ijkl}$. 
In this setting, we assume knowledge of the Lamé's first parameter $\lambda$, shear modulus $\mu$, and scaling coefficients $r_j: \Omega \longrightarrow \R$ for $j = 1,2,3$, and the unknown parameter to be reconstructed is $t^*: \bar \Omega \rightarrow \R$.

This model choice is motivated by the analysis in \cite{lin_detecting_2018}, which establishes three-spheres inequalities and corresponding propagation properties — key ingredients for the stability analysis of the inverse problem.

A representative example is $\mathbf{R} = (\delta_{fl} \delta_{ik})_{ijkl}$ for a fixed $f \in \{1,2,3\}$, corresponding to a material with a preferred stress direction aligned with a unit vector $\vect e_f$. 
This structure is akin to transverse isotropy, with $\vect e_f$ serving as the fibre direction in a fibre-reinforced material like cardiac muscle tissue \cite{sommer2015biomechanical}.

Furthermore, we put the following assumptions on the boundary data of the ground-truth solution.
\begin{assumption}[Regularity of the boundary data]\label{met:ass:boundary-data}
    We assume that the boundary data satisfies the regularity condition $\vect g \in H^{3/2}\left(\partial \Omega\right)$ for the displacement and $\vect p \in L^{2}\left(\partial \Omega\right)$ for the traction data. Furthermore, the displacement data $\vect g$ should be far from rigid movements, i.e.
    \[
    \Theta(\vect g):=\min \left\{\|\vect g-(\vect a+ \mat W x)\|_{H^{1 / 2}(\partial \Omega)}: \vect a \in \mathbb{R}^n, \mat W \in \mathbb{R}^{n \times n}, \mat W= \mat W^t\right\} \geq \delta_0 > 0.
    \]
\end{assumption}

We state the following inverse problem.
\begin{inverse_problem} \label{met:inv:general_problem}
    Given the boundary displacement $\vect g$, traction data $\vect p$, knowledge of the tensor $\tens C_t$ either as given by \cref{met:eqn:isotropic_law} or by \cref{met:eqn:anisotropic_law} up to the parameter $t$, and the ground-truth solution $\vect u^*$, reconstruct the unknown field parameter $t^*$.
\end{inverse_problem}

\subsection{Stability estimate}
Our main tool for deriving discretisation error estimates and convergence rates is a modified version of the conditional stability result established in \cite[Theorem 3.1]{di_fazio_stable_2017}. The key idea is to introduce a second displacement field $\vect u \in H^1(\Omega, \R^3)$ solving the inhomogeneous boundary value problem:
\begin{equation}\label{eqn:second-system}
\begin{cases}
\divergence \left(\tens C_t \nabla \vect u \right) = \vect f & \text{in } \Omega, \\
\vect u = \vect k & \text{on } \partial \Omega,
\end{cases}
\end{equation}
where $\vect f \in L^2(\Omega, \R^3)$ is a given force term and $\vect k \in H^{3/2}(\partial \Omega)$ denotes the prescribed boundary displacement.

This formulation departs from the setting in \cite{di_fazio_stable_2017} in two essential aspects:
\begin{enumerate}
    \item We allow one of the solutions (here, $\vect u$) to satisfy an inhomogeneous PDE.
    \item Instead of requiring knowledge of the inverse parameter on the boundary, we assume access to the ground-truth pressure $\vect p$ on $\partial \Omega$.
\end{enumerate}
The first adjustment is needed by the structure of the theoretical proofs. The latter one is particularly motivated by practical considerations in in-vivo cardiac tissue identification, where wall pressure measurements are accessible via imaging or catheter-based techniques, whereas direct boundary probing of tissue properties is not feasible.

To derive the stability estimate presented in \Cref{res:cor:stability_estimate}, we now list the structural conditions imposed on the elasticity tensor.

\begin{assumption}\label{met:ass:elasticity_boundedness}
    We require the regularity $t \in \mathcal C ^{3}(\bar \Omega)$ and further assume that
    \begin{enumerate}
        \item $(C_t)_{i,j,k,l} \in \mathcal C^{3}(\bar \Omega)$, $i,j,k,l=1,2,3$, for the components of the elasticity tensor $\tens C_t$,

        \item we have a uniform bound $\norm{t}_{\mathcal C^3(\bar \Omega)} \leq M$,

        \item there is a global $\rho>0$ such that
        \[
        \rho |\mat A|^2 \leq \tens C_t(\vect x) \mat A \cdot \mat A \quad \text{ for all } \mat A \in \operatorname{Sym}(\R^{3 \times 3}) \text{ and all } \vect x \in \bar \Omega.
        \]
    \end{enumerate}
    
\end{assumption}

\begin{remark}
    The high regularity demand is required for the stability results established in \cite{lin_detecting_2018}. For the isotropic case in \cref{met:eqn:isotropic_law}, it suffices to assume $t \in \mathcal C^{1}(\bar \Omega)$. Note, however, that also in view of the existence result of the optimisation problem in \Cref{opt:cor:globalconvergence} below, a control at least on the $H^2(\Omega)$ norm seems to be a reasonable choice. Given our discretisation approach, as developed in \cite{caforio_physics-informed_2024, hofler_physics-informed_2025}, this regularity assumption is appropriate for both settings.
\end{remark}

\begin{remark}
    For the isotropic case, \Cref{met:ass:elasticity_boundedness} means that $\lambda, \mu \in \mathcal C^1(\bar \Omega)$ with the standard convexity condition
    \[
    \mu(\vect x) \geq \mu_0 > 0, \quad 2\mu(\vect x) + 3 \lambda(\vect x) \geq \rho > 0 \quad \text{ for all } \vect x \in \bar \Omega. 
    \]
    For the anisotropic case, one requires $\lambda, \mu, r_1, r_2, r_3 \in \mathcal C^3(\bar \Omega)$, and for the convexity, in addition to the requirements for the isotropic part, that $t \geq t_0>0$ and that $\min_j r_j > 0$. 
\end{remark}

Using the same techniques as in \cite{di_fazio_stable_2017}, we can derive the following proposition, whose derivation is detailed in \Cref{sta:sec}.

\begin{proposition}\label{res:cor:stability_estimate}
    Let \Cref{met:ass:domain} and \Cref{met:ass:boundary-data} hold, $d>0$ be such that $\Omega_d$ is still a connected domain, $(\vect u^*, t^*)$ be solution to \cref{the:eqn:ground-truth}, and $(\vect u, t)$ be solution to \cref{eqn:second-system}. Furthermore, let $t, t^*$ fulfil \Cref{met:ass:elasticity_boundedness} with the a-priori bound $M>0$.
    Then, there exists a constant $C=C(\Omega, M, \norm{\vect k}_{H^{3 / 2}(\partial \Omega)}, \norm{\vect g}_{H^{3 / 2}(\partial \Omega)})>0$ and $\nu \in (0,1)$ such that
    \begin{equation}\label{res:eqn:stability_estimate}
        \left\|t-t^*\right\|_{L^{\infty}\left(\Omega_d\right)} \leq 
        C \Big\{ \norm{\vect f}_{L^2(\Omega)} + \norm{\tens {C}_t \symgrad \vect u \cdot \vect n - \vect p}_{L^2(\partial \Omega)} +
        \norm{\vect u - \vect u^*}_{L^2(\Omega)}^{1/4}
        \Big\}^\nu.
    \end{equation}
    Moreover, the constants $C, \nu$ do not depend on the field parameters $t, t^*$.
\end{proposition}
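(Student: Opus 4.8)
The plan is to transfer the conditional stability of the forward elasticity system to the parameter error by following the argument of \cite{di_fazio_stable_2017}, modified to accommodate the inhomogeneous equation \eqref{eqn:second-system} and the use of the traction data $\vect p$ in place of boundary knowledge of the parameter. The organising idea is to introduce the difference field $\vect w := \vect u - \vect u^*$, to show that the parameter error $t - t^*$ enters the equation governing $\vect w$ only as a source, and then to quantify this source in terms of the three data quantities on the right-hand side of \eqref{res:eqn:stability_estimate}.

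First I would derive the difference equation. Subtracting \eqref{the:eqn:ground-truth} from \eqref{eqn:second-system} and exploiting that $t \mapsto \tens C_t$ is affine in both material models — so that $(\tens C_t - \tens C_{t^*})\nabla\vect u^*$ is linear in $t - t^*$, equal to $(t-t^*)\tens R\nabla\vect u^*$ in the anisotropic case and to the analogous $2(t-t^*)\symgrad\vect u^*$-type expression in the isotropic one — I obtain
\[
\divergence\big(\tens C_t\nabla\vect w\big) = \vect f - \divergence\big((t-t^*)\tens R\nabla\vect u^*\big),
\]
so that $t - t^*$ appears precisely as the datum of a source term. On $\partial\Omega$ the Dirichlet trace of $\vect w$ is $\vect k - \vect g$, which is only bounded a priori and hence is absorbed into the constant $C$ via $\norm{\vect k}_{H^{3/2}(\partial\Omega)}$ and $\norm{\vect g}_{H^{3/2}(\partial\Omega)}$; the associated traction, after adding and subtracting $\vect p = \tens C_{t^*}\nabla\vect u^*\cdot\vect n$, is controlled by the second data term $\norm{\tens C_t\symgrad\vect u\cdot\vect n - \vect p}_{L^2(\partial\Omega)}$. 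This is exactly the point at which the traction measurement replaces the boundary value of the parameter used in \cite{di_fazio_stable_2017}.

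The next step upgrades the interior control of $\vect w$. The a priori bounds of \Cref{met:ass:elasticity_boundedness} together with interior elliptic regularity bound $\vect w$ in a high-order Sobolev norm on interior subdomains by a fixed constant, while the third data term gives smallness of $\norm{\vect w}_{L^2(\Omega)}$; interpolating the small $L^2$ norm against the bounded higher-order norm yields smallness of $\nabla\vect w$ and $\nabla^2\vect w$ in the interior with a fractional power, which is the origin of the exponent $1/4$ attached to $\norm{\vect u - \vect u^*}_{L^2(\Omega)}$. Combined with the smallness of $\vect f$, this makes the right-hand side of the difference equation small in $\Omega_d$. Finally, to pass from smallness of this source to a pointwise bound on $t - t^*$, I would invoke the three-spheres inequality and the propagation-of-smallness estimates of \cite{lin_detecting_2018} for the structured anisotropic operator, using the non-rigid-motion condition $\Theta(\vect g)\geq\delta_0$ of \Cref{met:ass:boundary-data} to guarantee that $\symgrad\vect u^*$ — hence the field $\tens R\nabla\vect u^*$ — does not degenerate; this both furnishes an interior region where $t - t^*$ is directly controlled and allows its smallness to be propagated throughout the connected set $\Omega_d$ with a single Hölder modulus $\nu\in(0,1)$.

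I expect the core difficulty to lie in this last step: adapting the three-spheres and propagation machinery of \cite{lin_detecting_2018, di_fazio_stable_2017}, which is developed for the homogeneous equation with exactly matched Cauchy data, to our setting where the equation carries the force $\vect f$ and the Cauchy data is matched only approximately (through the traction term and the interior displacement term). One must track how both $\vect f$ and the traction mismatch enter the three-spheres chain without destroying the Hölder dependence, and one must make the non-degeneracy of $\nabla\vect u^*$ quantitative and uniform in the field parameters so that the source relation can be inverted to an $L^\infty$ — rather than merely $L^2$ — bound on $t - t^*$; it is precisely this $L^\infty$ inversion that motivates the $\mathcal C^3$ regularity demanded in \Cref{met:ass:elasticity_boundedness}. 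Verifying that the resulting constant $C$ and exponent $\nu$ depend only on $\Omega$, $M$, and the boundary-data norms, and not on $t, t^*$ themselves, then completes the argument.
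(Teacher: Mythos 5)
Your high-level reading of the result is right --- affine dependence of $\tens C_t$ on $t$, the traction mismatch replacing boundary knowledge of the parameter, and quantitative non-degeneracy of $\nabla\vect u^*$ via the three-spheres machinery of \cite{lin_detecting_2018} together with the condition $\Theta(\vect g)\geq\delta_0$ --- but the proposal omits the one idea that actually closes the argument, and the route you sketch in its place does not work as stated. The paper does not control $\nabla\vect w$, $\nabla^2\vect w$ by interpolating $\norm{\vect w}_{L^2}$ against interior regularity and then ``invert'' the source relation $\divergence\left((t-t^*)\tens R\nabla\vect u^*\right)\approx 0$: that inversion is a first-order transport problem for $\varphi=t-t^*$ along $\tens R\nabla\vect u^*$, and without boundary data for $\varphi$ (which this setting deliberately withholds) your sketch offers no mechanism to pass from $L^2$-smallness of that divergence to $L^\infty$-smallness of $\varphi$. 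What the paper does instead (\Cref{lem:integral_estimate}) is test the difference of the two weak formulations with the truncated function $\mvect\zeta=h^{-1}\min\{\max\{\varphi,0\},h\}\,\vect u^*$, integrate by parts so that derivatives land on $\vect u^*$ and on the level sets of $\varphi$, and obtain the weighted $L^1$ bound $\int_\Omega|\varphi|\,|\nabla\vect u^*|^2\,dx\leq C\{\norm{\vect f}_{L^2(\Omega)}+\norm{\tens C_t\nabla\vect u\cdot\vect n-\vect p}_{L^2(\p\Omega)}+\norm{\vect u-\vect u^*}_{L^2(\Omega)}^{1/4}\}$. The exponent $1/4$ comes from optimising the truncation height $h$ and an auxiliary Young parameter, both chosen as powers of $\norm{\vect u-\vect u^*}_{L^2(\Omega)}$, not from Gagliardo--Nirenberg interpolation of $\vect w$.

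The second half is then elementary and again different from what you propose: at a point $x_0$ where $|\varphi|$ attains its maximum over $\bar\Omega_d$, Lipschitz continuity of $\varphi$ (with constant $2M$ from \Cref{met:ass:elasticity_boundedness}) gives $|\varphi(x_0)|\leq|\varphi(x)|+2Mr$ on $B_r(x_0)$; multiplying by $|\nabla\vect u^*|^2$, integrating, inserting the weighted $L^1$ bound and the lower bound $\int_{B_r(x_0)}|\nabla\vect u^*|^2\,dx\geq\eta>0$ (this is where \cite{lin_detecting_2018} and Korn's inequality enter), and optimising over $r$ yields the H\"older exponent $\nu$ (\Cref{the:thm:stabilityabstract}). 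There is no propagation of smallness of $t-t^*$ itself through $\Omega_d$: $\varphi$ solves no PDE, so three-spheres inequalities cannot be applied to it; propagation is applied only to $\nabla\vect u^*$ so that its non-vanishing is uniform over all admissible centres $x_0\in\Omega_d$. The truncated test function identity is the missing ingredient you would need to supply to make your route rigorous.
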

\begin{proof}
    See \Cref{sta:sec}.
\end{proof}

Unlike the estimate of \cite{di_fazio_stable_2017}, the right-hand side of the stability estimate \cref{res:eqn:stability_estimate} includes both the force term from the second system of \cref{eqn:second-system} and the discrepancy in traction data. 
Following the methodology proposed in \cite{mishra_estimates_2022, mishra_estimates_2023}, the second system of \cref{eqn:second-system} represents the state and parameter to be optimised. 
Ultimately, our goal is for the force term in \cref{eqn:second-system} to vanish, thereby bringing the system into closer alignment with the ground-truth model of \cref{the:eqn:ground-truth}. 
From the standpoint of parameter reconstruction, the stability estimate \cref{res:eqn:stability_estimate} suggests that achieving a good approximation of $t^*$ requires minimising the three terms on its right-hand side. With this in mind, we now proceed with the definition and properties of the continuous optimisation problem.

\subsection{The optimisation problem}\label{sec:opt-error}
To address the inverse problem stated in the \Cref{met:inv:general_problem}, we adopt an all-at-once optimisation framework, where both the parameter field $t$ and the state variable $\vect u$ are treated as optimisation variables. 
The aim is to minimise a loss functional that quantifies the discrepancy in physical residuals and observed data. 
We define the following residual terms:
\begin{equation}\label{opt:eqn:residual}
    \begin{aligned}
        \cR_{PDE}[t](\vect u) &\defeq \divergence \left(\tens C_t \nabla \vect u \right) - \divergence \left(\tens C_{t^*} \nabla \vect u^* \right) & \text{ in } \Omega, \\
        \cR_{BCN}[t](\vect u) &\defeq \tens C_t \nabla \vect u \cdot \vect n - \tens C_{t^*} \nabla \vect u^* \cdot \vect n & \text{ on } \p \Omega, \\
        \cR_{DATA}(\vect u) &\defeq \vect u - \vect u^* & \text{ in } \Omega.
    \end{aligned}
\end{equation}

Note that since the ground-truth solution $(\vect u^*, t^*)$ fulfils \cref{the:eqn:ground-truth}, we can rewrite the first two lines of \cref{opt:eqn:residual} as follows:
\begin{equation*}
    \begin{aligned}
        \cR_{PDE}[t](\vect u) &= \vect f & \text{ in } \Omega, \\
        \cR_{BCN}[t](\vect u) &= \tens C_t \nabla \vect u \cdot \vect n - \vect p & \text{ on } \p \Omega.
    \end{aligned}
\end{equation*}

Based on these residuals, the objective functional to be minimised is given by
\begin{equation}\label{met:eqn:lossfunction}
\begin{aligned}
    \cJ(\vect u, t) &= \lambda_{PDE} \norm{\cR_{PDE}[t](\vect u)}_{L^2(\Omega)}^2
    + \lambda_{BCN} \norm{\cR_{BCN}[t](\vect u)}_{L^2(\partial \Omega)}^2 \\
    & \quad \quad \quad \quad + \lambda_{DATA} \norm{\cR_{DATA}(\vect u)}_{L^2(\Omega)}^2.
\end{aligned}
\end{equation}

Without yet specifying the functional spaces for $t$ and $\vect u$, the continuous-level optimisation problem reads
\begin{equation}\label{met:eqn:opt-problem}
    \inf_{\vect u, t} \cJ(\vect u, t).
\end{equation}

To conclude, we connect the discrepancy in the parameters to the value of the loss functional by introducing the growth function
\[
B(\varepsilon) \coloneqq \begin{cases}
    (\varepsilon/\lambda)^{1/8}, & \text{ for } \varepsilon/\lambda \leq (\varepsilon^*)^2, \\
    (\varepsilon^*)^{1/4}+\sqrt{\varepsilon/\lambda-(\varepsilon^*)^2}, & \text{ for } \varepsilon/\lambda > (\varepsilon^*)^2, \\
\end{cases}
\]
where $0<\epsilon^*<1/2$ is characterised in \Cref{opt:lem:optbound} and 
$\lambda = \min\{\lambda_{PDE}, \lambda_{BCN}, \lambda_{DATA} \}$.

\begin{proposition}\label{res:cor:estimate_param_loss}
    In the setting of \Cref{res:cor:stability_estimate}, it holds that
    \begin{equation}\label{eqn:estimate_param_loss}
        \norm{t - t^*}_{L^\infty(\Omega_d)} \leq CB\left(\cJ(\vect u, t)\right)^\nu.
    \end{equation}
\end{proposition}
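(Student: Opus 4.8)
The plan is to substitute the three residual norms defining $\cJ$ into the stability estimate \cref{res:eqn:stability_estimate}, and then bound the resulting combination of norms by a single function of $\cJ$.

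First, I would read off that the three terms on the right-hand side of \cref{res:eqn:stability_estimate} are exactly the residual norms from \cref{opt:eqn:residual}: by the reformulation given after \cref{opt:eqn:residual}, $\norm{\vect f}_{L^2(\Omega)} = \norm{\cR_{PDE}[t](\vect u)}_{L^2(\Omega)}$, the traction mismatch $\norm{\tens C_t \symgrad \vect u \cdot \vect n - \vect p}_{L^2(\partial\Omega)} = \norm{\cR_{BCN}[t](\vect u)}_{L^2(\partial\Omega)}$ (identifying $\tens C_t \nabla \vect u$ with $\tens C_t \symgrad \vect u$ through the constitutive law), and $\norm{\vect u - \vect u^*}_{L^2(\Omega)} = \norm{\cR_{DATA}(\vect u)}_{L^2(\Omega)}$. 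Abbreviating $a \defeq \norm{\cR_{PDE}[t](\vect u)}_{L^2(\Omega)}$, $b \defeq \norm{\cR_{BCN}[t](\vect u)}_{L^2(\partial\Omega)}$, and $c \defeq \norm{\cR_{DATA}(\vect u)}_{L^2(\Omega)}$, the estimate \cref{res:eqn:stability_estimate} becomes $\norm{t - t^*}_{L^\infty(\Omega_d)} \leq C(a + b + c^{1/4})^\nu$.

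Next I would relate $(a,b,c)$ to $\cJ$. From \cref{met:eqn:lossfunction}, $\cJ(\vect u, t) = \lambda_{PDE} a^2 + \lambda_{BCN} b^2 + \lambda_{DATA} c^2 \geq \lambda(a^2 + b^2 + c^2)$ with $\lambda = \min\{\lambda_{PDE}, \lambda_{BCN}, \lambda_{DATA}\}$, so that $(a,b,c)$ lies in the nonnegative orthant of the Euclidean ball of radius $(\cJ/\lambda)^{1/2}$. It therefore suffices to control $a + b + c^{1/4}$ over that ball, i.e. to bound $\sup\{a + b + c^{1/4} : a,b,c \geq 0,\ a^2 + b^2 + c^2 \leq \cJ/\lambda\}$ by $B(\cJ)$ up to a constant. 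I would estimate the two residual contributions together via $a + b \leq \sqrt{2(a^2 + b^2)} \leq (2\cJ/\lambda)^{1/2}$, and handle the concave data contribution $c^{1/4} = (c^2)^{1/8} \leq (\cJ/\lambda)^{1/8}$ separately, since for a small budget $c^{1/4}$ grows much faster than the square-root contributions. Balancing these competing rates over the available budget $\cJ/\lambda$ produces exactly the two branches of $B$: in the regime $\cJ/\lambda \leq (\varepsilon^*)^2$ the data term dominates and yields the $(\cJ/\lambda)^{1/8}$ branch, whereas for $\cJ/\lambda > (\varepsilon^*)^2$ one saturates the data budget at $c = \varepsilon^*$ and lets the residual terms absorb the surplus, yielding the $(\varepsilon^*)^{1/4} + (\cJ/\lambda - (\varepsilon^*)^2)^{1/2}$ branch.

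The main obstacle is this extremal computation: pinning down the switch-over threshold $\varepsilon^*$ and checking that a single constant works uniformly on both sides of the non-smooth junction $\cJ/\lambda = (\varepsilon^*)^2$, where the two branches must be matched continuously. This is precisely what \Cref{opt:lem:optbound} supplies, and I would invoke it to conclude $a + b + c^{1/4} \leq C' B(\cJ)$. Finally, since $x \mapsto x^\nu$ is nondecreasing on $[0,\infty)$, combining this with the matched stability estimate gives $\norm{t - t^*}_{L^\infty(\Omega_d)} \leq C(C' B(\cJ))^\nu$; absorbing $(C')^\nu$ into the constant yields \cref{eqn:estimate_param_loss}, with $C$ and $\nu$ still independent of $t, t^*$ by \Cref{res:cor:stability_estimate}.
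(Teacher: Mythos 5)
Your proposal is correct and follows essentially the same route as the paper: the paper's proof is exactly the combination of the stability estimate \cref{res:eqn:stability_estimate} with \Cref{opt:lem:optbound}, which you invoke after identifying the residual norms with the terms $a$, $b$, $c^{1/4}$. The only cosmetic difference is that \Cref{opt:lem:optbound} already yields $a+b+c^{1/4}\leq B(\cJ)$ directly, so the auxiliary constant $C'$ and the intermediate bound $a+b\leq\sqrt{2(a^2+b^2)}$ are not needed.
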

\begin{proof}
    This follows by using the stability estimate \cref{res:eqn:stability_estimate} together with the bound from \Cref{opt:lem:optbound}.
\end{proof}

In particular, \cref{eqn:estimate_param_loss} means that, given that we can reach the best optimal loss value $\cJ(\vect u, t)=0$, we get $\vect u = \vect u^*$ in the $L^2(\Omega)$-sense and $t=t^*$ in the $L^\infty(\Omega_d)$-sense. 
Of course, this also means that the candidate spaces we choose for \cref{met:eqn:opt-problem} need to include the ground-truth parameter and state, and that we can also effectively reduce the optimisation error
\[
\mathcal J(\hat{\vect u}, \hat t\, ) - \inf_{\vect u, t} \cJ(\vect u, t),
\]
where $\hat{\vect u}, \hat t$ correspond to the actual computed solution found by our numerical algorithm - still a current challenge when training PINNs.

\section{Regularised inverse problem}

With the minimisation problem and an initial stability estimate in place, we proceed to study the existence of solutions, their convergence properties, and the stability of the method under perturbations caused by noise in the data.

In practice, the measurements of the displacement $\vect u^*$ and boundary traction $\vect p$ are subject to noise. We denote the noisy observations by $\vect u^*(\delta_1)$ and $\vect p (\delta_2)$, which satisfy the error bounds $\norm{\vect u^* - \vect u^*(\delta_1)}_{L^2(\Omega)} \leq \delta_1$ and $\norm{\vect p - \vect p(\delta_2)}_{L^2(\partial \Omega)} \leq \delta_2$.
To incorporate this noise into the loss functional from \cref{met:eqn:lossfunction}, we define modified residuals as follows:
\begin{equation}\label{opt:eqn:residualnoise}
    \begin{aligned}
        \cR_{BCN}^{\delta_2}[t](\vect u) &= \tens C_t \nabla \vect u \cdot \vect n - \vect p(\delta_2), \\
        \cR_{DATA}^{\delta_1}(\vect u) &= \vect u - \vect u^*(\delta_1).
    \end{aligned}
\end{equation}
Assuming for simplicity that the noise levels in both measurements are equal ($\delta \defeq \delta_1 = \delta_2$), the noisy loss functional becomes
\begin{equation}\label{met:eqn:loss-delta}
\begin{aligned}
    \cJ^\delta(\vect u, t) &= \lambda_{PDE} \norm{\cR_{PDE}[t](\vect u)}_{L^2(\Omega)}^2
    + \lambda_{BCN} \norm{\cR_{BCN}^\delta[t](\vect u)}_{L^2(\partial \Omega)}^2 \\
    & \quad \quad \quad \quad + \lambda_{DATA} \norm{\cR_{DATA}^\delta(\vect u)}_{L^2(\Omega)}^2.
    \end{aligned}
\end{equation}

\subsection{Least squares inversion}

At this point, we turn our attention to analysing and clarifying the optimisation problem of \cref{met:eqn:opt-problem}. 
We begin by considering the case with a fixed noise level.

\begin{proposition}\label{opt:cor:globalconvergence}
    Let \Cref{met:ass:domain} and \Cref{met:ass:boundary-data} hold and $t^*$ fulfil \Cref{met:ass:elasticity_boundedness}. Define for given $K,N \geq 0$ the set of admissible solutions as $$\mathcal A=\{ (\vect u, t) \in H^2(\Omega, \R^3) \times W^{2,\infty}(\Omega) \mid \norm{t}_{W^{2,\infty}(\Omega)} \leq K, \norm{\vect u}_{H^2(\Omega)} \leq N \}.$$
    Then, the problem 
    \begin{equation}\label{eqn:noisy-opt-problem}
    \inf_{(\vect u, t) \in \mathcal A} \cJ^\delta(\vect u, t)
    \end{equation}
    admits a solution and any minimising sequence $(\vect u_n, t_n)_n \subset \mathcal A$ of it
    admits a convergent subsequence such that $t_n \rightarrow \tilde t$ in $\mathcal C(\Omega)$ and $\vect u_n \rightharpoonup \tilde{\vect u}$ in $H^2(\Omega, \R^3)$ with $(\tilde{\vect u}, \tilde t) \in \mathcal A$ being solution to \cref{eqn:noisy-opt-problem}.\\
    If we additionally assume that the limit point $\tilde t$ fulfils \Cref{met:ass:elasticity_boundedness}, then we have the a-posteriori estimate
    \begin{equation}\label{opt:eqn:posteriori-estimate}
    \begin{aligned}
    \norm{\tilde t - t^*}_{L^\infty(\Omega_d)} &\leq C B\left( \inf_{(\vect u, t) \in \mathcal A} \cJ^\delta(\vect u, t) + 2 \max\{\lambda_{BCN}, \lambda_{DATA}\} \delta^2 \right)^\nu \\
    &\leq C B\left( 4 \max\{\lambda_{BCN}, \lambda_{DATA}\} \delta^2 \right)^\nu,
    \end{aligned}
    \end{equation}
    where the constants $C>0$, $\nu>0$ are those from the stability estimate in \Cref{res:cor:stability_estimate}.
\end{proposition}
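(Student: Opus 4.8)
The plan is to establish existence and the stated compactness by the direct method of the calculus of variations, and then to obtain the a-posteriori bound by combining feasibility of the ground truth with \Cref{res:cor:estimate_param_loss}. For existence, I would take a minimising sequence $(\vect u_n, t_n)_n \subset \mathcal A$ of \cref{eqn:noisy-opt-problem}. The bound $\norm{t_n}_{W^{2,\infty}(\Omega)} \leq K$ makes $\{t_n\}$ bounded and equicontinuous in $\mathcal C^1(\bar\Omega)$, so Arzelà--Ascoli yields a subsequence with $t_n \to \tilde t$ in $\mathcal C^1(\bar\Omega)$ (in particular in $\mathcal C(\Omega)$), while weak-$*$ lower semicontinuity of the $L^\infty$-norm of the second derivatives gives $\tilde t \in W^{2,\infty}(\Omega)$ with $\norm{\tilde t}_{W^{2,\infty}} \leq K$. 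The bound $\norm{\vect u_n}_{H^2} \leq N$ gives, along a further subsequence, $\vect u_n \rightharpoonup \tilde{\vect u}$ in $H^2(\Omega,\R^3)$ with $\norm{\tilde{\vect u}}_{H^2} \leq N$; the compact embedding $H^2 \hookrightarrow\hookrightarrow H^1$ then yields $\nabla \vect u_n \to \nabla \tilde{\vect u}$ strongly in $L^2(\Omega)$, and compactness of the trace $H^1 \to H^{1/2}(\p\Omega) \hookrightarrow\hookrightarrow L^2(\p\Omega)$ yields $\nabla \vect u_n|_{\p\Omega} \to \nabla \tilde{\vect u}|_{\p\Omega}$ in $L^2(\p\Omega)$. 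Hence $(\tilde{\vect u}, \tilde t) \in \mathcal A$.

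It then remains to show $\liminf_n \cJ^\delta(\vect u_n, t_n) \geq \cJ^\delta(\tilde{\vect u}, \tilde t)$. Using that the ground truth solves \cref{the:eqn:ground-truth}, the PDE residual reduces to $\cR_{PDE}[t](\vect u) = \divergence(\tens C_t \nabla \vect u)$; expanding this by the product rule splits it into a term coupling the first derivatives of the coefficients of $\tens C_{t_n}$ (which depend affinely on $t_n, \nabla t_n$ and hence converge in $\mathcal C(\bar\Omega)$) with $\nabla \vect u_n$ (converging strongly in $L^2$), and a highest-order term $\tens C_{t_n} : \nabla^2 \vect u_n$. The latter is the delicate one: since $\tens C_{t_n} \to \tens C_{\tilde t}$ strongly in $L^\infty$ and $\nabla^2 \vect u_n \rightharpoonup \nabla^2 \tilde{\vect u}$ weakly in $L^2$, a weak--strong pairing shows $\tens C_{t_n} : \nabla^2 \vect u_n \rightharpoonup \tens C_{\tilde t} : \nabla^2 \tilde{\vect u}$ in $L^2(\Omega)$. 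Thus $\cR_{PDE}[t_n](\vect u_n) \rightharpoonup \cR_{PDE}[\tilde t](\tilde{\vect u})$ weakly in $L^2(\Omega)$ and weak lower semicontinuity of the norm handles the PDE term. For the boundary term $\cR_{BCN}^{\delta}[t_n](\vect u_n) = \tens C_{t_n} \nabla \vect u_n \cdot \vect n - \vect p(\delta)$, strong $L^\infty(\p\Omega)$-convergence of $\tens C_{t_n}$ together with the strong $L^2(\p\Omega)$-convergence of $\nabla \vect u_n|_{\p\Omega}$ gives strong $L^2(\p\Omega)$-convergence, and the data term converges strongly in $L^2(\Omega)$ by $\vect u_n \to \tilde{\vect u}$ in $L^2$. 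Combining these, $\cJ^\delta$ is sequentially lower semicontinuous along the subsequence, so $(\tilde{\vect u}, \tilde t)$ is a minimiser.

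For the a-posteriori bound I would first record that the ground truth $(\vect u^*, t^*)$ is feasible (by \Cref{met:ass:elasticity_boundedness}, $t^* \in \mathcal C^3(\bar\Omega) \subset W^{2,\infty}$, and $\vect u^* \in H^2$ by elliptic regularity, for $K,N$ chosen large enough), and that since it annihilates the PDE residual and matches the clean data up to noise, $\inf_{\mathcal A} \cJ^\delta \leq \cJ^\delta(\vect u^*, t^*) \leq (\lambda_{BCN} + \lambda_{DATA}) \delta^2 \leq 2 \max\{\lambda_{BCN}, \lambda_{DATA}\} \delta^2$. Applying \Cref{res:cor:estimate_param_loss} to the minimiser gives $\norm{\tilde t - t^*}_{L^\infty(\Omega_d)} \leq C B(\cJ(\tilde{\vect u}, \tilde t))^\nu$ with the noiseless loss $\cJ$. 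A triangle-inequality comparison of $\cJ$ and $\cJ^\delta$ at $(\tilde{\vect u}, \tilde t)$, using $\norm{\vect p - \vect p(\delta)}_{L^2(\p\Omega)} \leq \delta$, $\norm{\vect u^* - \vect u^*(\delta)}_{L^2(\Omega)} \leq \delta$ together with the smallness $\cJ^\delta(\tilde{\vect u}, \tilde t) \leq 2\max\{\lambda_{BCN}, \lambda_{DATA}\}\delta^2$ of the noisy residuals, then yields $\cJ(\tilde{\vect u}, \tilde t) \leq \inf_{\mathcal A}\cJ^\delta + 2\max\{\lambda_{BCN}, \lambda_{DATA}\}\delta^2$. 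Since $B$ is monotone increasing (its two branches agree at and increase across the threshold $\varepsilon/\lambda = (\varepsilon^*)^2$), the first inequality of \cref{opt:eqn:posteriori-estimate} follows, and inserting $\inf_{\mathcal A}\cJ^\delta \leq 2\max\{\lambda_{BCN}, \lambda_{DATA}\}\delta^2$ with monotonicity once more gives the second.

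The main obstacle is the lower semicontinuity of the PDE and boundary residuals, since both are nonlinear in the joint variable $(\vect u, t)$ through the highest-order term $\tens C_t : \nabla^2 \vect u$; the argument hinges precisely on the chosen spaces, with the $W^{2,\infty}$-bound on $t$ delivering $\mathcal C^1$-compactness (hence strong $L^\infty$-convergence of $\tens C_{t_n}$ and of its first derivatives) while the $H^2$-bound on $\vect u$ supplies only weak $L^2$-convergence of $\nabla^2 \vect u_n$, so that the products close exactly through weak--strong pairing and trace compactness. A secondary point to track is the exact constant in the $\cJ$-versus-$\cJ^\delta$ comparison, which is obtained by combining Young's inequality with the a-priori smallness of the noisy residuals at the minimiser.
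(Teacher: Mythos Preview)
Your proof is correct and follows essentially the same route as the paper's: the direct method with Arzel\`a--Ascoli for the parameter, weak $H^2$ compactness for the state, sequential lower semicontinuity of $\cJ^\delta$, and then the stability estimate combined with the pointwise comparison $\cJ \leq \cJ^\delta + 2\max\{\lambda_{BCN},\lambda_{DATA}\}\delta^2$ plus feasibility of $(\vect u^*, t^*)$. The only notable differences are stylistic: the paper packages the lower semicontinuity step into appendix lemmas (showing $\tens C_{t_n}\nabla\vect u_n \rightharpoonup \tens C_{\tilde t}\nabla\tilde{\vect u}$ weakly in $H(\operatorname{div};\Omega)$ and in $L^2(\partial\Omega)$), whereas you expand the divergence explicitly and argue via weak--strong pairing for the highest-order term and trace compactness for the boundary residual; and for the a-posteriori bound the paper routes the comparison through $\liminf_n \cJ(\vect u_n,t_n)$ along the minimising sequence, while you apply it directly at the minimiser $(\tilde{\vect u},\tilde t)$ using $\cJ^\delta(\tilde{\vect u},\tilde t)=\inf_{\mathcal A}\cJ^\delta$.
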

\begin{proof}
    First note that for all  $\vect u, t$, the objective functional is bounded from below by $\cJ^\delta(\vect u, t) \geq 0$ and proper on $\mathcal A$. As a consequence, we have $0 \leq \displaystyle \inf_{(\vect u, t) \in \mathcal A} \cJ^\delta(\vect u, t) < \infty$.
    Hence, we can take an infimal sequence $(\vect u_n, t_n)_n \subset \mathcal A$ such that
    \[
    \cJ^\delta(\vect u_n, t_n) \rightarrow \inf_{(\vect u, t) \in \mathcal A} \cJ^\delta(\vect u, t) \quad \text{ for } n \rightarrow \infty.
    \]
    Since $\vect u_n$ is bounded in $H^2(\Omega, \R^3)$, we can extract a weakly convergent subsequence, denoted by the same indices, such that $\vect u_n \rightharpoonup \tilde {\vect u}$ in $H^2(\Omega, \R^3)$. Similarly, since $t_n$ is bounded in $W^{2,\infty}(\Omega)$, we can extract a weak* convergent subsequence $t_n \overset{*}{\rightharpoonup} \tilde t$ in $W^{2,\infty}(\Omega)$. Furthermore, since $\mathcal A$ is weakly closed in the state and weak* closed in the parameter, we get $(\tilde {\vect u},\tilde t) \in \mathcal A$. Then, by the boundedness of the sequence $(t_n)_n$ in $L^2(\Omega)$ due to the embedding $L^\infty \hookrightarrow L^2$, and since the dual of $L^1$ is $L^\infty$, and $L^1 \cap L^2$ is dense in $L^2$, we also get $t_n \rightharpoonup \tilde t$ in $H^{2}(\Omega)$.

    Then, by \Cref{met:ass:domain}, we have the continuous embedding
    $W^{2, \infty}(\Omega) \hookrightarrow \mathcal C^1(\bar \Omega)$ (see e.g. \cite[Theorem 10.5]{alt_linear_2016}) which translates into a bound
    \[
    \norm{t}_{C^{1}(\Omega)} \leq K,
    \]
    and thus we can apply the Arzelà-Ascoli theorem, see e.g. \cite[Theorem 4.25]{brezis_functional_2011}, to extract a globally convergent subsequence $t_n \rightarrow \hat t$ in $\mathcal C(\Omega)$. Due to the same embedding, in particular because 
    $\mathcal C^1(\bar \Omega) \hookrightarrow W^{2, \infty}(\Omega) \hookrightarrow H^2(\Omega)$,
    we have $\hat t = \tilde t$ in $\mathcal C(\bar \Omega)$.

    Using the continuity properties given in \Cref{con:sec}, in particular \Cref{con:lem:iso-continuity}, \Cref{con:lem:aniso-continuity}, and \Cref{con:lem:boundary-continuity}, we get $\mathcal J$ and hence $\cJ^\delta$ is lower semi-continuous with respect to weak convergence in $H^2(\Omega, \R^3) \times H^2(\Omega)$, hence proving that $(\tilde {\vect u}, \tilde \mu)$ minimises $\cJ^\delta$ over $\mathcal A$.

    Now, by observing that $\cJ \leq \cJ^\delta + 2 \max\{\lambda_{BCN}, \lambda_{DATA}\} \delta^2$ pointwise, and by applying the stability estimate of \cref{eqn:estimate_param_loss} together with the weak lower semi-continuity of $\cJ$, we obtain
    \[
    \begin{aligned}
    \norm{\tilde t - t^*}_{L^\infty(\Omega_d)}
    &\leq CB\left( \cJ (\tilde{\vect u}, \tilde t) \right) ^\nu \\
    &\leq CB\left(\liminf_{n \rightarrow \infty} \cJ(\vect u_n, t_n)\right)^\nu \\
    &\leq CB\left(\liminf_{n \rightarrow \infty} \cJ^\delta(\vect u_n, t_n) + 2\max\{\lambda_{BCN}, \lambda_{DATA}\}\delta^2\right)^\nu \\
    &\leq CB\left( \inf_{(\vect u, t) \in \mathcal A} \cJ^\delta(\vect u, t) + 2\max\{\lambda_{BCN}, \lambda_{DATA}\}\delta^2\right)^\nu.
    \end{aligned}
    \]
    The last inequality can be further estimated from above by choosing any element from $\mathcal A$. 
    In particular, to obtain the last inequality in \cref{opt:eqn:posteriori-estimate}, we can just insert $(\vect u^*, t^*)$.
\end{proof}

Clearly, imposing the regularity condition of \Cref{met:ass:elasticity_boundedness} on an a-priori unknown limit point is not practically viable. 
However, in the asymptotic case where the noise level tends to zero, this requirement corresponds to a constraint on the true solution itself — an assumption that may be acceptable in practical scenarios.

\begin{theorem}\label{opt:thm:noisyexistence}
    Let \Cref{met:ass:domain} and \Cref{met:ass:boundary-data} hold. Denote by $\mathcal A$ the candidate space as given in \Cref{opt:cor:globalconvergence} and let $(\vect u^\delta, t^\delta) \in \mathcal A$ denote a solution to
    \begin{equation*}
    \inf_{(\vect u, t) \in \mathcal A} \cJ^\delta(\vect u, t),
    \end{equation*}
    where $\mathcal J^\delta(\vect u, t)$ is given by \cref{met:eqn:loss-delta} and the noise level $\delta>0$ is given a-priori.
    Then, for the noise level $\delta \rightarrow 0$, we have for a subsequence, also denoted by $(\vect u^\delta, t^\delta)$, that
    \begin{equation}\label{eqn:conv-noisy-to-zero}
    \begin{aligned}
        \vect u^\delta &\rightharpoonup \hat{\vect u} \quad \text{ in } H^2(\Omega, \R^3), \\
        t^\delta &\rightarrow \hat t \quad \phantom{.}\text{ in } \mathcal C(\Omega),
    \end{aligned}
    \end{equation}
    where $(\hat {\vect u}, \hat t) \in \mathcal A$ is a solution to 
    \begin{equation*}
    \inf_{(\vect u, t) \in \mathcal A} \cJ(\vect u, t).
    \end{equation*}

    In particular, if the limit point $\hat t$ fulfils \Cref{met:ass:elasticity_boundedness}, we get strong convergence $t^\delta \rightarrow t^*$ in $L^\infty(\Omega_d)$ by \Cref{opt:cor:globalconvergence}.
\end{theorem}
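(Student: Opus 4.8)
The plan is to combine the compactness already established in \Cref{opt:cor:globalconvergence} with a vanishing comparison between the noisy and noise-free functionals, and finally to invoke the stability estimate to upgrade the convergence of the parameter to $L^\infty(\Omega_d)$.

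First I would exploit that the admissible set $\mathcal A$ is independent of the noise level $\delta$ and bounded in $H^2(\Omega,\R^3) \times W^{2,\infty}(\Omega)$. Hence the family of minimisers $(\vect u^\delta, t^\delta)_{\delta > 0}$ lies in a fixed bounded set, and exactly the compactness argument of \Cref{opt:cor:globalconvergence} applies (with the index $n$ replaced by a sequence $\delta \to 0$): along a subsequence one extracts $\vect u^\delta \rightharpoonup \hat{\vect u}$ in $H^2(\Omega,\R^3)$, $t^\delta \overset{*}{\rightharpoonup} \hat t$ in $W^{2,\infty}(\Omega)$, and---using $W^{2,\infty}(\Omega) \hookrightarrow \mathcal C^1(\bar\Omega)$ together with Arzelà--Ascoli---$t^\delta \to \hat t$ in $\mathcal C(\Omega)$, with $(\hat{\vect u}, \hat t) \in \mathcal A$ by the weak/weak* closedness of $\mathcal A$. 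This yields \cref{eqn:conv-noisy-to-zero}.

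The main step is to show that $(\hat{\vect u}, \hat t)$ minimises the noise-free functional $\cJ$ over $\mathcal A$. Here the crucial ingredient is a comparison bound uniform over $\mathcal A$ that vanishes as $\delta \to 0$. Since the $\cR_{PDE}$ term is identical in $\cJ$ and $\cJ^\delta$, the two functionals differ only through the data and traction residuals, which change by $\vect u^*(\delta) - \vect u^*$ and $\vect p(\delta) - \vect p$, both of $L^2$-norm at most $\delta$; expanding the squared norms and using that $\cR_{BCN}^\delta, \cR_{DATA}^\delta$ are bounded on $\mathcal A$ (via $\norm{\vect u}_{H^2} \leq N$, the trace theorem, and $\norm{t}_{W^{2,\infty}} \leq K$), the cross terms are $\mathcal O(\delta)$, so that $|\cJ(\vect u, t) - \cJ^\delta(\vect u, t)| \leq C\delta$ for all $(\vect u, t) \in \mathcal A$ with $C$ independent of $\delta$. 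Combining this with the weak lower semicontinuity of $\cJ$ (from \Cref{con:lem:iso-continuity}, \Cref{con:lem:aniso-continuity}, \Cref{con:lem:boundary-continuity}, as in \Cref{opt:cor:globalconvergence}) and the $\cJ^\delta$-minimality of $(\vect u^\delta, t^\delta)$, for arbitrary $(\vect v, s) \in \mathcal A$ one chains
\[
\cJ(\hat{\vect u}, \hat t) \leq \liminf_{\delta \to 0} \cJ(\vect u^\delta, t^\delta) \leq \liminf_{\delta \to 0} \cJ^\delta(\vect u^\delta, t^\delta) \leq \liminf_{\delta \to 0} \cJ^\delta(\vect v, s) \leq \cJ(\vect v, s),
\]
where the $\mathcal O(\delta)$ error terms drop out in the limit. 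As $(\vect v, s)$ is arbitrary, $(\hat{\vect u}, \hat t)$ solves $\inf_{(\vect u,t)\in\mathcal A} \cJ$.

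Finally, for the ``in particular'' claim, I would assume the limit $\hat t$ satisfies \Cref{met:ass:elasticity_boundedness} and apply \Cref{opt:cor:globalconvergence} (equivalently \Cref{res:cor:estimate_param_loss}) in the noise-free setting $\delta = 0$: inserting the ground-truth pair $(\vect u^*, t^*)$---admissible as implicitly assumed throughout---into the infimum gives $\inf_{\mathcal A}\cJ = 0$, whence $\norm{\hat t - t^*}_{L^\infty(\Omega_d)} \leq C B(0)^\nu = 0$, i.e. $\hat t = t^*$ on $\Omega_d$. Since uniform convergence $t^\delta \to \hat t$ implies $t^\delta \to \hat t$ in $L^\infty(\Omega_d)$, we conclude $t^\delta \to t^*$ in $L^\infty(\Omega_d)$. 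I expect the comparison bound to be the only delicate point: one must check that the cross terms are genuinely $\mathcal O(\delta)$ uniformly on $\mathcal A$ rather than merely $\mathcal O(1)$, which is exactly where the a-priori bounds defining $\mathcal A$ and the trace estimates enter.
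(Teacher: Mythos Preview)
Your proposal is correct and follows essentially the same approach as the paper: both extract the convergent subsequence from the $\delta$-independent bounds defining $\mathcal A$ via the embeddings and Arzel\`a--Ascoli as in \Cref{opt:cor:globalconvergence}, establish a uniform-on-$\mathcal A$ comparison $|\cJ-\cJ^\delta|=\mathcal O(\delta)$ (the paper writes this out via $a^2-b^2=(a-b)(a+b)$, Cauchy--Schwarz and the trace theorem, exactly the ``cross terms are $\mathcal O(\delta)$'' step you flag as delicate), and then chain weak lower semicontinuity of $\cJ$ with the $\cJ^\delta$-minimality. The only cosmetic difference is that you compare against an arbitrary competitor $(\vect v,s)\in\mathcal A$ to obtain minimality, whereas the paper inserts the ground-truth pair $(\vect u^*,t^*)$ directly and uses $\cJ(\vect u^*,t^*)=0$; your ``in particular'' argument via $B(0)=0$ is likewise the unpacking of what the paper cites as ``by \Cref{opt:cor:globalconvergence}''.
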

\begin{proof}
    Since $\cJ^\delta(\vect u, t) \leq \cJ(\vect u, t)+2\max\{\lambda_{BCN}, \lambda_{DATA}\}\delta^2$ for all $(\vect u, t) \in \mathcal A$, $\vect u(\delta) \rightarrow \vect u^*$ and $\vect p(\delta) \rightarrow \vect p$ strongly in $L^2(\Omega)$ and $L^2(\partial \Omega)$ respectively, we have $\cJ^\delta \rightarrow \cJ$ pointwise as $\delta \rightarrow 0$.

    Using the same embeddings as in the proof of \Cref{opt:cor:globalconvergence}, we get the proposed convergence (up to subsequences) as stated in \cref{eqn:conv-noisy-to-zero}. At this point, we denote the corresponding limit point by $(\tilde{\vect u}, \tilde t)$ since it is subject of the proof to show that it solves the noiseless problem.
    
    Furthermore, since $(\vect u^\delta, t^\delta)$ are minimisers of the noisy loss functional $\cJ^\delta$, we have 
    \[
    \cJ^\delta(\vect u^\delta, t^\delta) \leq \cJ^\delta(\hat{\vect u}, \hat t)
    \leq \cJ(\vect u^*, t^*)+2\max\{\lambda_{BCN}, \lambda_{DATA}\}\delta^2.
    \]
    This implies that
    \[
    \limsup_{\delta \rightarrow 0} \cJ^\delta(\vect u^\delta, t^\delta) \leq \cJ(\vect u^*, t^*)=0.
    \]
    Also, using $a^2-b^2=(a-b)(a+b)$ together with Cauchy-Schwarz inequality, the reverse triangle inequality and the boundedness assumption on the set $\mathcal A$ together with the trace theorem (see e.g. \cite[Theorem 2.21]{steinbach_numerical_2008}), we can estimate, for a given noise level $\eta>0$,
    \[\begin{aligned}
    \left| \cJ^\eta(\vect u^\delta,t^\delta) - \cJ(\vect u^\delta,t^\delta) \right|
    &\leq \lambda_{BCN} \int_{\p \Omega} \left| |\mathbb{C}_{t^\delta} \symgrad \vect u^\delta \cdot \vect n - \vect p (\eta)
    |^2 - | \mathbb{C}_{t^\delta} \symgrad \vect u^\delta \cdot \vect n - \vect p |^2
    \right| ds \\ 
    & \quad \quad \quad \quad + \lambda_{DATA}
    \int_\Omega \left| |\vect u^\delta - \vect u^*(\eta)|^2 - |\vect u^\delta - \vect u^*|^2 \right| dx \\
    &\leq C(\lambda_{BCN}, \lambda_{DATA}, \Omega, M,N) \left(
    \norm{ \vect p (\eta) - \vect p }_{L^2(\p \Omega} + \norm{ \vect u^*(\eta) - \vect u^* }_{L^2(\Omega} \right),
    \end{aligned}\]
    which shows that $\cJ^\eta(\vect u^\delta,t^\delta) \rightarrow \cJ(\vect u^\delta,t^\delta)$ uniformly as $\eta \rightarrow 0$. With this, we can interchange the loss functionals in the limit, meaning that $\liminf_{\delta \rightarrow 0} \cJ( \vect u^\delta, t^\delta)
    = \liminf_{\delta \rightarrow 0} \cJ^\delta( \vect u^\delta, t^\delta)$.
    Together with the weak lower semi-continuity of $\cJ$, we get 
    \[
    \cJ(\tilde {\vect u}, \tilde t) \leq \liminf_{\delta \rightarrow 0} \cJ( \vect u^\delta, t^\delta)
    \leq \liminf_{\delta \rightarrow 0} \cJ^\delta( \vect u^\delta, t^\delta)
    \leq \limsup_{\delta \rightarrow 0} \cJ^\delta( \vect u^\delta, t^\delta) \leq \cJ(\vect u^*, t^*)=0.
    \qedhere
    \]
\end{proof}

This is a standard property that one would expect from a sufficiently regularised optimisation problem, as discussed, for instance, in \cite{bredies_higher-order_2020}. 
Furthermore, the formulation can be easily adapted to account for displacement and traction data corrupted by noise of varying intensity.

However, the introduction of the norm restrictions $\norm{t}_{W^{2,\infty}(\Omega)} \leq K, \norm{\vect u}_{H^2(\Omega)} \leq N $ is somewhat artificial. 
Of course, $\vect u \in H^2(\Omega, \R^3)$ seems to be a reasonable requirement. 
Also note that in view of the stability estimate, we have $t \in \mathcal C^{k}(\bar \Omega) \hookrightarrow W^{2, \infty}(\Omega)$ if $k \geq 1$ (e.g. by \cite[Theorem 10.5]{alt_linear_2016}).

\section{Discretisation}

We proceed by introducing finite-dimensional, typically conforming, ansatz spaces $X_\Theta \subset H^2(\Omega, \R^3)$ and $V_H \subset H^2(\Omega)$ to approximate the state and the parameter, respectively. The discrete approximations are denoted by $\vect u_\theta \in X_\Theta$ and $t_\eta \in V_H$. Accordingly, the residuals are reformulated as:
\begin{equation}\label{eqn:residual-interior}
    \begin{aligned}
        \cR_{PDE}[\mvect \eta](\mvect \theta) &= \divergence \left( \mathbb C_{t_\eta} \symgrad \vect u_\theta \right), \\
        \cR_{BCN}[\mvect \eta](\mvect \theta) &= \mathbb{C}_{t_\eta} \symgrad \vect u_\theta \cdot \vect n - \vect p, \\
        \cR_{DATA}(\mvect \theta) &= \vect u_\theta - \vect u^*.
    \end{aligned}
\end{equation}

Hence, the discretised loss function is given by:
\begin{equation*}
\begin{aligned}
    \cJ(\mvect \theta,\mvect \eta) &= \lambda_{PDE} \norm{\cR_{PDE}[\mvect \eta](\mvect \theta)}_{L^2(\Omega)}^2
    + \lambda_{BCN} \norm{\cR_{BCN}[\mvect \eta](\mvect \theta)}_{L^2(\partial \Omega)}^2 \\
    & \quad \quad \quad \quad + \lambda_{DATA} \norm{\cR_{DATA}(\mvect \theta)}_{L^2(\Omega)}^2.
\end{aligned}
\end{equation*}

At this stage, we remain within a continuous-in-space setting, as our primary interest lies in analytical norms rather than their discrete numerical counterparts. Our initial aim is to derive a generic bound on the loss function that is, in principle, independent of any specific discretisation scheme.

Discretisation errors are subsequently characterised through a stability estimate and the aforementioned loss function bounds. 
To ensure the generality of our analysis, we introduce an additional continuity assumption, which is verified for the systems of \cref{met:eqn:isotropic_law} and \cref{met:eqn:anisotropic_law}, as discussed in \Cref{con:lem:joint_bound}.

\begin{assumption}\label{res:ass:joint_bound}
    We assume that for all $\vect u_1, \vect u_2 \in H^2(\Omega, \R^3)$ and $t_1, t_2\in H^1(\Omega)$, we have the estimate
    \[
    \norm{\tens C_{t_1} \nabla \vect u_1 - \tens C_{t_2} \nabla \vect u_2}_{H^1(\Omega)} \leq C_b \left( \norm{\vect u_1 - \vect u_2}_{H^2(\Omega)} + \norm{t_1 - t_2}_{H^1(\Omega)} \right),
    \]
    with a constant $C_b = C_b(\norm{\vect u_1}_{H^2(\Omega)}, \norm{\vect u_2}_{H^2(\Omega)}, \norm{t_1}_{H^1(\Omega)}, \norm{t_2}_{H^1(\Omega)})$ that depends continuously on its arguments.
\end{assumption}

We also note that \Cref{met:cor:dsrnrate} below requires the function subject to approximation to lie in $H^3$ in order to obtain a positive convergence rate exponent. 
However, this regularity assumption is likewise necessary for the mesh-based approximation result stated in \Cref{met:thm:fem-approx} to ensure control in the $H^2(\Omega)$ norm. 
Consequently, for the discretisation analysis, we assume at least $\vect{u}^* \in H^3(\Omega, \mathbb{R}^3)$ and $t^* \in H^3(\Omega)$ to obtain meaningful bounds.

We begin by stating the following upper bound on the loss functional, which follows directly from \Cref{res:ass:joint_bound}.

\begin{lemma}\label{pro:lem:lossbound}
    Assume that $\vect u \in H^2(\Omega, \R^3)$, $t \in H^1(\Omega)$. 
    Then, we can bound the loss function of \cref{met:eqn:lossfunction} by
    \begin{equation*}
    \cJ(\vect u, t) \leq
    C \left( \norm{\vect u - \vect u^*}_{H^2(\Omega)}^2 + \norm{t - t^*}_{H^1(\Omega)}^2 \right),
    \end{equation*}
    with a constant $C=C(\Omega, M, \norm{\vect u^*}_{H^2(\Omega)})>0$.
\end{lemma}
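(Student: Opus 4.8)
The plan is to reduce all three residual contributions to a single object, namely the stress discrepancy
\[
\tens W \defeq \tens C_t \nabla \vect u - \tens C_{t^*} \nabla \vect u^*,
\]
whose $H^1(\Omega)$ norm is directly controlled by \Cref{res:ass:joint_bound}. Applying that assumption with $\vect u_1 = \vect u$, $\vect u_2 = \vect u^*$, $t_1 = t$, $t_2 = t^*$ yields
\[
\norm{\tens W}_{H^1(\Omega)} \leq C_b \left( \norm{\vect u - \vect u^*}_{H^2(\Omega)} + \norm{t - t^*}_{H^1(\Omega)} \right).
\]
Everything then follows by rewriting each residual of \cref{opt:eqn:residual} in terms of $\tens W$ and either losing at most one derivative in the interior or passing to the boundary trace.

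First I would treat the interior terms. Since $(\vect u^*, t^*)$ solves \cref{the:eqn:ground-truth}, the PDE residual is exactly $\cR_{PDE}[t](\vect u) = \divergence \tens W$, so that $\norm{\cR_{PDE}[t](\vect u)}_{L^2(\Omega)} \leq \norm{\tens W}_{H^1(\Omega)}$, because the (row-wise) divergence is dominated by the $H^1$ seminorm up to a dimensional constant. The data residual is immediate: $\norm{\cR_{DATA}(\vect u)}_{L^2(\Omega)} = \norm{\vect u - \vect u^*}_{L^2(\Omega)} \leq \norm{\vect u - \vect u^*}_{H^2(\Omega)}$. For the boundary term I would write $\cR_{BCN}[t](\vect u) = \tens W \cdot \vect n$ on $\p\Omega$ and invoke the trace theorem: under \Cref{met:ass:domain} the trace operator maps $H^1(\Omega)$ continuously into $H^{1/2}(\p\Omega) \hookrightarrow L^2(\p\Omega)$, and since $|\vect n| = 1$ pointwise, this gives $\norm{\cR_{BCN}[t](\vect u)}_{L^2(\p\Omega)} \leq \norm{\restr{\tens W}{\p\Omega}}_{L^2(\p\Omega)} \leq C_{\mathrm{tr}}(\Omega) \norm{\tens W}_{H^1(\Omega)}$.

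Squaring each of the three estimates, inserting them into \cref{met:eqn:lossfunction}, and using $(a+b)^2 \leq 2(a^2+b^2)$ on the right-hand side of the bound for $\norm{\tens W}_{H^1}$ then produces
\[
\cJ(\vect u, t) \leq C \left( \norm{\vect u - \vect u^*}_{H^2(\Omega)}^2 + \norm{t - t^*}_{H^1(\Omega)}^2 \right),
\]
with $C$ collecting the weights $\lambda_{PDE}, \lambda_{BCN}, \lambda_{DATA}$, the trace constant $C_{\mathrm{tr}}(\Omega)$, and $C_b^2$.

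The only delicate point — and the place where the precise form of the constant must be argued — is the dependence of $C_b$ on its four arguments. By hypothesis $C_b$ depends on $\norm{\vect u^*}_{H^2(\Omega)}$, which is explicitly permitted, and on $\norm{t^*}_{H^1(\Omega)}$, which is harmless since the ground-truth parameter obeys \Cref{met:ass:elasticity_boundedness}, giving $\norm{t^*}_{H^1(\Omega)} \leq C(\Omega) M$. The residual dependence on $\norm{\vect u}_{H^2(\Omega)}$ and $\norm{t}_{H^1(\Omega)}$ I would absorb via the triangle inequality, $\norm{\vect u}_{H^2} \leq \norm{\vect u - \vect u^*}_{H^2} + \norm{\vect u^*}_{H^2}$ and likewise for $t$; as $C_b$ depends continuously on its arguments, it remains bounded whenever $\vect u, t$ range over a bounded neighbourhood of $(\vect u^*, t^*)$, which is exactly the regime relevant for the approximation estimate \Cref{met:cor:dsrnrate}, where the differences vanish. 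This yields the stated dependence $C = C(\Omega, M, \norm{\vect u^*}_{H^2(\Omega)})$, and I expect this constant bookkeeping, rather than any analytic difficulty, to be the main obstacle.
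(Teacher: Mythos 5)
Your proposal is correct and follows essentially the same route as the paper's proof: apply \Cref{res:ass:joint_bound} to the stress discrepancy, use $H^1(\Omega,\R^3)\hookrightarrow H(\operatorname{div};\Omega,\R^3)$ for the PDE residual, the trace theorem for the boundary residual, and the trivial embedding for the data term. Your closing discussion of how the dependence of $C_b$ on $\norm{\vect u}_{H^2(\Omega)}$ and $\norm{t}_{H^1(\Omega)}$ is absorbed into the stated constant is in fact more careful than the paper, which simply writes the final constant as $(C_b + C_{tr}C_b + 1)$ without addressing this point.
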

\begin{proof}
    \Cref{res:ass:joint_bound} directly translates into a bound for the PDE residual since $H^1(\Omega,\R^3) \hookrightarrow H(\operatorname{div};\Omega,\R^3)$. 
    Using the trace theorem, e.g. \cite[A 8.6]{alt_linear_2016}, we get $H^1(\Omega) \hookrightarrow L^2(\p \Omega)$ leading to the same estimate for the boundary residual with the additional factor $C_{tr}>0$ from the trace theorem. 
    Overall, we get the estimate
    \begin{equation*}
    \cJ(\vect u, t) \leq
    (C_b + C_{tr} C_b + 1) 
    \left( \norm{\vect u - \vect u^*}_{H^2(\Omega)}^2 + \norm{t - t^*}_{H^1(\Omega)}^2 \right). \qedhere
    \end{equation*}
\end{proof}

\subsection{Neural network approximations}
In the PINN framework, both the state variable and the unknown parameter are approximated using neural networks. Hence, the question of general approximation results in Sobolev spaces arises naturally. The literature on these approximation results has increased vastly in the last years. In \cite{yang_nearly_2023}, so-called deep super ReLU networks are introduced with approximation rates in Sobolev norms. The work \cite{morina_growth_2024} shows parameter growh rates for ReLU networks. Since \Cref{met:ass:elasticity_boundedness} requires high regularity on the employed networks, we carry out the analysis with tanh networks, where \cite{de_ryck_approximation_2021} showed corresponding approximation results in Sobolev norms.

We start with the notation of a standard MLP, i.e. a feedforward network composed of multiple layers of neurons representing affine transformations and with the same non-linear activation function in-between them.
Formally, let the input to the network be denoted by $ \vect x ^{0} \in \R^{n_0}$. 
For each layer $l=1,2,...,L-1$, the $l$-th forward pass is defined via
\[
\vect x^{l} = \sigma\left(\mat {W}^{l} \vect {x}^{l-1} + \vect {b}^{l}\right),
\]
where $\mat {W}^{l} \in \mathbb{R}^{n_l \times n_{l-1}}$ is the weight matrix, $\vect {b}^{l} \in \mathbb{R}^{n_l}$ the bias vector, and $\sigma$ is an element-wise applied, nonlinear activation 
function.
The final output of the network is given by
\[
\vect {y} = \mat {W}^{L} \vect {x}^{L-1} + \vect {b}^{L}.
\]

A given MLP is therefore fully characterised by its weights, biases, and its activation function. 
We summarise this information in the vector
\begin{equation*}
    \mathcal{A} = \left( (\mat {W}^{1}, \vect {b}^{1}), ..., (\mat {W}^{L}, \vect {b}^{L}) \mid \sigma \right),
\end{equation*}
and identify therefore a MLP with $\mathcal{A}$.

Accordingly, we define the corresponding network function of the MLP as
\begin{equation*}
    R(\mathcal{A}): \R^{n_0} \longrightarrow \R^{n_L}, \quad \vect {x}^{0} \mapsto \vect {y}.
\end{equation*}

We measure the size of a MLP $\mathcal{A}$ by the vector $(L(\mathcal{A}), N(\mathcal{A}))$, where $L(\mathcal{A})=L-1$ denotes the depth, i.e. number of hidden layers or the count of weight and bias tuples $(\mat {W}^{(l)}, \vect {b}^{(l)})$, and $N(\mathcal{A})=\displaystyle \max_{l=1...L-1} n_l$ the width, i.e. the maximal size over all hidden layers. By convenience, we denote by $S(\mathcal{A})=\sigma$ its activation function.

For a given depth $L\in\N$, width $N\in\N$, and activation function $\sigma$, we define
\begin{equation*}
    \mathcal{F}(L,N, \sigma) \defeq \left\{ \mathcal{A} \text{ MLP } \mid L(\mathcal{A}) \leq L, N(\mathcal{A}) \leq N, S(\mathcal{A})=\sigma\right\},
\end{equation*}
to be a given class of networks.

As discussed in the previous section, the bound in \cref{opt:eqn:posteriori-estimate} serves as a practical criterion for estimating the parameter error. However, this needs a-priori assumptions on the limit point $\tilde t$. 
Fortunately, by imposing additional explicit constraints into the neural network architecture, we can make the whole sequence $(t_n)$ of \Cref{res:cor:stability_estimate} fulfil the regularity condition (1) and convexity condition (3) of \Cref{met:ass:elasticity_boundedness} immediately. In particular, using tanh networks makes the network smooth whereas constraining the output to be strictly positive, e.g. by adding a rescaled tanh activation to the parameter network output, leads to the sought convexity.
Regarding the uniform bound (2) of \Cref{met:ass:elasticity_boundedness}, we note that the $n$-th derivative of a single layer network $R(\vect x) = \sigma(\mat W \vect x + \vect b)$ is given by
\[
R^{(n)}(\vect x) = \sigma^{(n)}(\mat W \vect x + \vect b) \odot \left( \otimes_n \mat W \right), 
\]
where $\odot \left( \otimes_n \mat W \right)$ denotes a tensor product of $n$ outer products of rows of $\mat W$. 
Importantly, since $\vect x \in \bar \Omega$ is bounded, this means that we just need enough regularity on the activation function $\sigma$ and ensure boundedness of the weights.

We now state the key theoretical result from \cite[Theorem 5.1]{de_ryck_error_2022}, which establishes the approximation capabilites of tanh networks in Sobolev spaces.

\begin{corollary}[Sobolev space approximation]\label{met:cor:dsrnrate}
    Let $k \in \N$, $\Omega=(0,1)^3$, and $f \in W^{k, \infty}(\Omega)$. Then, for a given $\tilde N \in \N$ large, there exists a tanh network $R(\mathcal A) \in \mathcal F(2,N,\sigma)$ with $N=\mathcal O(\tilde N^3)$ such that
    \[
    \norm{f-R(\mathcal A)}_{W^{2,\infty}(\Omega)} = \mathcal O \left( \frac{\log^2 \tilde N}{\tilde N^{k-2}} \right).
    \]
\end{corollary}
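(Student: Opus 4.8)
The plan is to obtain the statement as a direct specialisation of the general tanh-network Sobolev approximation theorem of \cite{de_ryck_error_2022}, rather than to prove an approximation result from scratch. That theorem provides, for any spatial dimension $d$, target smoothness $k$, and loss order $s$ with $k > s$, a tanh network with two hidden layers whose widths scale like $\mathcal O(\tilde N)$ in the first hidden layer and $\mathcal O(\tilde N^d)$ in the second, and which approximates any $f \in W^{k,\infty}((0,1)^d)$ at rate $\mathcal O\!\left(\log^s(\beta \tilde N)\, \tilde N^{-(k-s)}\right)$ in the $W^{s,\infty}$-norm, where the suppressed constants (and the constant $\beta$ inside the logarithm) depend only on $d$, $k$, $s$, and $\norm{f}_{W^{k,\infty}(\Omega)}$, but not on the free integer $\tilde N$. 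The hypothesis ``$\tilde N$ large'' in the corollary accommodates the lower threshold on $\tilde N$ present in the cited theorem.

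First I would fix the parameters to our setting: set $d=3$ (the domain $\Omega=(0,1)^3$) and $s=2$, the latter dictated by the fact that we measure the error in $W^{2,\infty}(\Omega)$, as required by the regularity demands of \Cref{met:ass:elasticity_boundedness}. Identifying the free integer of the cited theorem with $\tilde N$, the second hidden layer then has width $\mathcal O(\tilde N^3)$ and the first $\mathcal O(\tilde N)$, so the network width is $N(\mathcal A)=\max_{l} n_l = \mathcal O(\tilde N^3)$, matching the claimed $N=\mathcal O(\tilde N^3)$. Because the construction uses exactly two hidden layers, we have $L(\mathcal A)=2$, so that $R(\mathcal A)\in\mathcal F(2,N,\sigma)$ with $\sigma=\tanh$. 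Finally, with $s=2$ the error bound reads $\mathcal O\!\left(\log^2(\beta \tilde N)\, \tilde N^{-(k-2)}\right)$, and since $\log^2(\beta \tilde N)=\mathcal O(\log^2 \tilde N)$ for fixed $\beta$, this yields the stated rate $\mathcal O\!\left(\log^2 \tilde N / \tilde N^{\,k-2}\right)$.

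Since the heavy lifting — the constructive emulation of polynomial approximants by tanh networks together with the simultaneous control of all derivatives up to order two — is entirely contained in the cited theorem, the only genuine work on our side is bookkeeping, which I expect to be the main (albeit minor) obstacle. In particular I would: (i) reconcile the convention for counting layers, noting that in our notation the depth is $L(\mathcal A)=L-1$, so that ``two hidden layers'' indeed corresponds to $\mathcal F(2,\cdot,\cdot)$; (ii) record that the exponent is positive, and hence the rate genuinely decaying, only when $k-2>0$, i.e. $k\geq 3$, which is precisely the $H^3$-type regularity flagged in the discussion preceding \Cref{pro:lem:lossbound}; and (iii) make explicit that the suppressed constants are independent of $\tilde N$, so that the $\mathcal O$-statement is uniform as $\tilde N \to \infty$. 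No estimate beyond the cited theorem is needed.
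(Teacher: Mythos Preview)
Your proposal is correct and mirrors the paper's own proof: both obtain the corollary as a direct specialisation of \cite[Theorem~5.1]{de_ryck_error_2022} with $d=3$, $s=2$, reading off the two-hidden-layer architecture with width $\mathcal O(\tilde N^3)$ and absorbing the $\log^2(\beta \tilde N^{k+5})$ factor into $\mathcal O(\log^2 \tilde N)$. Your additional bookkeeping remarks (layer-count convention, the $k\geq 3$ threshold, uniformity of constants) go slightly beyond what the paper spells out but are consistent with it.
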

\begin{proof}
    This is a simplification of \cite[Theorem 5.1]{de_ryck_error_2022}. In particular, we use the original bound
    \[
    \norm{f-R(\mathcal A)}_{W^{2,\infty}(\Omega)} \leq 27(1+\delta)6^6 \log^2(\beta \tilde N^{k+5}) \frac{\mathcal C(3,2,k,f)}{\tilde N^{k-2}},
    \]
    where $\delta>0, \beta>1$, and $\mathcal C(3,2,k,f)$ are defined in \cite{de_ryck_error_2022}.
\end{proof}

\subsubsection{Discretisation errors with PINNs}

This section examines the impact of neural network-based discretisation on the optimisation problem. 
Specifically, we aim at deriving a bound on the interior error of the sought parameter in terms of the network sizes. 
Such bounds are referred to by different names in the literature: in the context of learning theory, they are often called \emph{generalisation errors}, as discussed in \cite{mishra_estimates_2022, mishra_estimates_2023}, while in classical FE analysis they are known as \emph{best approximation errors}.

We now invoke the general neural network approximation results provided in \Cref{met:cor:dsrnrate}.

\begin{theorem}[Approximation error for the parameter, neural-network based]\label{thm:approx-error-shear-modulus}
    Let \Cref{met:ass:domain}--\ref{met:ass:elasticity_boundedness} and \Cref{res:ass:joint_bound} hold and assume the (possibly increased) regularity $\vect u^* \in H^k(\Omega, \R^3)$ and $t^* \in H^k(\Omega)$ with $k \geq 3$.
    Let $\varepsilon > 0$ be a given level of tolerance. Then, there exists a tanh network class $\mathcal F(2,N, \sigma)$ such that for the minimisers $\hat \bu _\theta \in \mathcal F(2,N, \sigma)^3$, $\hat t_\eta \in \mathcal F(2,N, \sigma)$ of
    \[
    \min_{\substack{\bu _\theta \in \mathcal F(2,N, \sigma)^3 \\ t_\eta \in \mathcal F(2,N, \sigma)}} \mathcal J^\delta(\boldsymbol \theta, \boldsymbol \eta),
    \]
    we achieve
    \begin{equation*}
        \left\|\hat t_\eta-t^*\right\|_{L^{\infty}\left(\Omega_d\right)} \leq \varepsilon + CB(4\max\{\lambda_{BCN}, \lambda_{DATA}\}\delta^2)^\nu,
    \end{equation*}
    where the constants $C,\nu>0$ are from \Cref{res:cor:stability_estimate}.
    
    In particular, we get the worst-case bound
    \begin{equation*}
        N = \Omega \left( \epsilon ^{- 24/(\nu(k-2)-2\nu\rho)} \right),
    \end{equation*}
    where $\rho>0$ for the case $k=3$ and $\rho=0$ if $k>3$.
\end{theorem}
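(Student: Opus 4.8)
The plan is to reduce the statement to the network approximation result of \Cref{met:cor:dsrnrate}, using the loss bound of \Cref{pro:lem:lossbound} as a bridge and the stability estimate of \Cref{res:cor:estimate_param_loss} to convert smallness of the loss into smallness of the parameter error. First I would fix the architecture: following the remark preceding \Cref{met:cor:dsrnrate}, I take $\tanh$ networks and constrain the parameter output to be strictly positive with uniformly bounded weights, so that \emph{every} admissible $t_\eta \in \mathcal F(2,N,\sigma)$ — and in particular the minimiser $\hat t_\eta$ — automatically satisfies the smoothness, uniform-bound and convexity conditions (1)--(3) of \Cref{met:ass:elasticity_boundedness}. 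This is precisely what makes the stability estimate applicable to the trained network without any a-priori hypothesis on an unknown limit point, and the positivity constraint is compatible with approximating the (positive) ground truth.

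Next I would construct reference approximants $\hat\bu_\theta^\circ \in \mathcal F(2,N,\sigma)^3$ and $t_\eta^\circ \in \mathcal F(2,N,\sigma)$ of $(\vect u^*, t^*)$. Since \Cref{met:cor:dsrnrate} is stated for $W^{k,\infty}$ data on the cube whereas here $\vect u^*, t^* \in H^k$ on a general $\mathcal C^1$ domain, I would first extend to a cube via a Sobolev extension operator and then invoke the embedding $H^k(\Omega)\hookrightarrow W^{k',\infty}(\Omega)$ to supply the required $L^\infty$-type source regularity. In $\R^3$ this holds cleanly for $k>3$, and at the borderline $k=3$ only up to an arbitrarily small loss $\rho>0$ in the smoothness index — which is exactly the origin of the $-2\nu\rho$ correction in the final exponent. \Cref{met:cor:dsrnrate} then furnishes $W^{2,\infty}$-, and hence $H^2$- and $H^1$-, approximation errors decaying polynomially in the auxiliary parameter $\tilde N$, at width $N=\mathcal O(\tilde N^3)$.

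With these approximants in hand I would run the following chain. Writing $\Lambda:=\max\{\lambda_{BCN},\lambda_{DATA}\}$ for brevity, minimality of $(\hat\bu_\theta,\hat t_\eta)$ over the network class gives $\cJ^\delta(\hat\bu_\theta,\hat t_\eta)\le \cJ^\delta(\hat\bu_\theta^\circ,t_\eta^\circ)$, and combining the pointwise comparisons $\cJ\le\cJ^\delta+2\Lambda\delta^2$ and $\cJ^\delta\le\cJ+2\Lambda\delta^2$ used in \Cref{opt:cor:globalconvergence} and \Cref{opt:thm:noisyexistence} yields $\cJ(\hat\bu_\theta,\hat t_\eta)\le \cJ(\hat\bu_\theta^\circ,t_\eta^\circ)+4\Lambda\delta^2$. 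Applying \Cref{res:cor:estimate_param_loss} to the admissible minimiser and then \Cref{pro:lem:lossbound} to the reference approximant, I obtain
\[
\norm{\hat t_\eta-t^*}_{L^\infty(\Omega_d)}\le C\,B\!\left(\cJ(\hat\bu_\theta^\circ,t_\eta^\circ)+4\Lambda\delta^2\right)^\nu .
\]
Using that $B$ is continuous and nondecreasing with $B(0)=0$, that it coincides with the concave branch $(\cdot/\lambda)^{1/8}$ for the small arguments at hand (so that $B(s+r)\le B(s)+B(r)$ there), and the subadditivity $(x+y)^\nu\le x^\nu+y^\nu$ for $\nu\in(0,1)$, I would split the right-hand side into $C\,B(\cJ(\hat\bu_\theta^\circ,t_\eta^\circ))^\nu + C\,B(4\Lambda\delta^2)^\nu$ and then choose $\tilde N$ large enough that the first, approximation-driven term is at most $\varepsilon$. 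This proves the qualitative bound.

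Finally, the quantitative width estimate comes from inverting this last requirement. On the concave branch $B(\cdot)\sim(\cdot/\lambda)^{1/8}$, while by \Cref{pro:lem:lossbound} the loss at the reference approximant scales like the \emph{square} of the $H^2$/$H^1$ approximation error; tracking these through $B(\cdot)^\nu$ turns the condition ``first term $\le\varepsilon$'' into a polynomial lower bound on $\tilde N$, which becomes the stated bound on $N$ after substituting the effective smoothness $k-2\rho$ from the embedding step and using $N=\mathcal O(\tilde N^3)$. I expect the main obstacle to be precisely this bookkeeping at the borderline $k=3$: one must verify that the arbitrarily small embedding loss $\rho$ is the only degradation, that imposing the architectural positivity and weight-boundedness constraints does not spoil the approximation rate of \Cref{met:cor:dsrnrate}, and that the several concatenated exponents — the $1/8$ from $B$, the squaring in \Cref{pro:lem:lossbound}, the stability exponent $\nu$, and the cube in $N=\mathcal O(\tilde N^3)$ — combine to produce exactly $24/\bigl(\nu(k-2)-2\nu\rho\bigr)$.
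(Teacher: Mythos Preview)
Your proposal is correct and follows essentially the same route as the paper: construct reference $\tanh$-network approximants via \Cref{met:cor:dsrnrate}, bound the noiseless loss by \Cref{pro:lem:lossbound}, pass from $\cJ^\delta$ to $\cJ$ at the cost of $4\Lambda\delta^2$, apply the stability estimate, and split via subadditivity of $B(\cdot)^\nu$. The only noteworthy difference is the origin of the correction exponent $\rho$: the paper introduces $\rho$ purely to absorb the $\log^2\tilde N$ factor in the rate of \Cref{met:cor:dsrnrate} (writing $\log^2\tilde N/\tilde N^{k-2}=\mathcal O(\tilde N^{-(k-2)+2\rho})$ with $\rho>0$ only needed at $k=3$), whereas you attribute it to the Sobolev embedding $H^k\hookrightarrow W^{k',\infty}$ needed to feed $H^k$ data into the $W^{k,\infty}$ hypothesis of \Cref{met:cor:dsrnrate}; the paper simply applies the corollary without commenting on this mismatch, so your version is in fact more careful on that point.
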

\begin{proof}
    We focus first on the noiseless problem.
    By \Cref{met:cor:dsrnrate}, there exist tanh networks $\vect u_\theta \in \mathcal F(2,N,\sigma)$, $t_\eta \in \mathcal F(2,N, \sigma)$ such that
    \begin{align*}
        \norm{\vect u_\theta - \vect u^*}_{H^2(\Omega)} &= \mathcal O \left( 
    \frac{\log^2 \tilde N}{\tilde N^{k-2}}
    \right)=\mathcal O \left( \tilde N ^{-(k-2)/2+\rho}\right), \\
    \norm{t_\eta - t^*}_{H^2(\Omega)} &= \mathcal O \left( 
    \frac{\log^2 \tilde N}{\tilde N^{k-2}}
    \right)=\mathcal O \left( \tilde N ^{-(k-2)/2+\rho}\right),
    \end{align*}
    where $\rho>0$ for the case $k=3$ and $\rho=0$ if $k>3$.

    In particular, using \Cref{pro:lem:lossbound}, we can estimate
    \begin{equation}\label{opt:eqn:approxerrorestimate}
        \cJ(\mvect \theta,\mvect \eta) \leq
        C \left( \norm{\vect u_\theta - \vect u^*}_{H^2(\Omega)}^2 + \norm{t_\eta - t^*}_{H^1(\Omega)}^2 \right) \leq \tilde{\varepsilon},
    \end{equation}
    with a given tolerance level $\tilde \varepsilon > 0$, where we have  
    \begin{equation*}
        \tilde N = \Omega \left( \tilde \epsilon ^{- 1/((k-2)-2\rho)} \right).
    \end{equation*}
    
    Applying now Lemma \ref{opt:lem:optbound} with $\tilde{\epsilon}$, we obtain

    \begin{equation*}
        \norm{\cR_{PDE}[\mvect \eta](\mvect \theta)}_{L^2(\Omega)} + \norm{\cR_{BCN}[\mvect \eta](\mvect \theta)}_{L^2(\partial \Omega)} +
    \norm{\cR_{DATA}(\mvect \theta)}_{L^2(\Omega)}^{1/4}
    \leq B(\tilde{\epsilon}),
    \end{equation*}
    such that by the stability estimate of \cref{res:eqn:stability_estimate}, we have 

    \begin{equation}\label{eqn:bound-noiseless-net}
        \left\|t_\eta-t^*\right\|_{L^{\infty}\left(\Omega_d\right)} \leq C B(\tilde{\varepsilon})^\nu,
    \end{equation}
    where $C>0$ combines the previous constant from \cref{opt:eqn:approxerrorestimate} with the one from the stability estimate \cref{res:eqn:stability_estimate}.
    
    The conclusion follows by choosing $\tilde{\varepsilon}$ small enough such that $C B(\tilde{\varepsilon})^\nu \leq \varepsilon$. In particular, since $B(\tilde \epsilon)= \mathcal O (\tilde \epsilon ^{1/8})$, we need to choose
    \begin{equation*}
        \tilde N = \Omega \left( \epsilon ^{- 8/(\delta(k-2)-2\nu\rho)} \right).
    \end{equation*}
    In the worst case, the bound $N=\mathcal O(\tilde N^3)$ is asymptotically sharp such that we get
    \begin{equation*}
        N = \Omega \left( \epsilon ^{- 24/(\nu(k-2)-2\nu\rho)} \right).
    \end{equation*}

    Now, we also want to consider the noisy problem. We note that
    \begin{equation}\label{eqn:loss-estimate-noisy-noiseless}
        \mathcal J(\hat \bu_\theta, \hat t_\eta) \leq \mathcal J(\bu_\theta, t_\eta)+4\max\{\lambda_{BCN}, \lambda_{DATA}\}\delta^2.
    \end{equation}
    Note that $B(\cdot)^\nu$ is concave, monotonically increasing, satisfies $B(0)=0$, and therefore is subadditive. This makes it possible to replace $t_\eta$ with $\hat t_\eta$ in \cref{eqn:bound-noiseless-net} such that we get the decomposition
    \begin{align*}
        \left\|\hat t_\eta-t^*\right\|_{L^{\infty}\left(\Omega_d\right)} &\leq CB\left( \mathcal J(\hat \bu_\theta, \hat t_\eta) \right)^\nu \\
        &\leq CB\left( \mathcal J(\bu_\theta, t_\eta)+4\max\{\lambda_{BCN}, \lambda_{DATA}\}\delta^2 \right)^\nu \\
        &\leq C B(\tilde{\varepsilon})^\nu + CB(4\max\{\lambda_{BCN}, \lambda_{DATA}\}\delta^2)^\nu,
    \end{align*}
    where $\hat \bu_\theta, \hat t_\eta \in \mathcal F(2,N,\sigma)$ are in the same network class as defined for the noiseless solution.
\end{proof}

For the sake of completeness, we also incorporate the quadrature error, which is caused by the approximation of the analytic norms, into our analysis. 

Assume that the given quadrature rules satisfy
\begin{equation}\label{opt:eqn:quadrature-shear-modulus}
\begin{aligned}
    \left| \norm{\mathcal{R}_{PDE}[\mvect{\eta}](\mvect{\theta})}_{L^2(\Omega)}^2 
    - Q_{N_1}^\Omega \left[ \left| \mathcal{R}_{PDE}[\mvect{\eta}](\mvect{\theta}) \right|^2 \right] \right| 
    &= \mathcal{O}(N_1^{-\alpha_1}), \\
    \left| \norm{\mathcal{R}_{BCN}[\mvect{\eta}](\mvect{\theta})}_{L^2(\partial \Omega)}^2 
    - Q_{N_2}^{\partial \Omega} \left[ \left| \mathcal{R}_{BCN}[\mvect{\eta}](\mvect{\theta}) \right|^2 \right] \right| 
    &= \mathcal{O}(N_2^{-\alpha_2}), \\
    \left| \norm{\mathcal{R}_{DATA}(\mvect{\theta})}_{L^2(\Omega)}^2 
    - Q_{N_3}^\Omega \left[ \left| \mathcal{R}_{DATA}(\mvect{\theta}) \right|^2 \right] \right| 
    &= \mathcal{O}(N_3^{-\alpha_3}),
\end{aligned}
\end{equation}
where the collocation points used in the quadrature rules are summarised by the tuple \( \mathcal{S} = (\mathcal{S}_{PDE}, \mathcal{S}_{BCN}, \mathcal{S}_{DATA}) \).

We define the resulting empirical loss function as
\begin{align*}
    \mathcal{J}_N(\mvect{\theta}, \mvect{\eta} \mid \mathcal{S}) &=
    \lambda_{PDE} \, Q_{N_1}^\Omega \left[ \left| \mathcal{R}_{PDE}[\mvect{\eta}](\mvect{\theta}) \right|^2 \right] 
    + \lambda_{BCN}\,  Q_{N_2}^{\partial \Omega} \left[ \left| \mathcal{R}_{BCN}[\mvect{\eta}](\mvect{\theta}) \right|^2 \right] \\
    & \quad \quad \quad \quad + \lambda_{DATA} \, Q_{N_3}^\Omega \left[ \left| \mathcal{R}_{DATA}(\mvect{\theta}) \right|^2 \right].
\end{align*}

\begin{corollary}[A-posteriori error]
    Assume that we have two networks $\vect u_\theta, t_\eta \in \mathcal{F}(L,N, \sigma)$ such that 
    \[
    \cJ_N(\mvect \theta,\mvect \eta | \mathcal{S}) \leq \varepsilon,
    \] for a given set of collocation points $\mathcal{S}$ and tolerance level $\varepsilon>0$.
    Define the total quadrature error by
    \begin{equation*}
        Q(\mathcal{S}) \coloneq \lambda_{PDE} N_1^{-\alpha_1} + \lambda_{BCN} N_2^{-\alpha_2} + \lambda_{DATA} N_3^{-\alpha_3}.
    \end{equation*}
    Then, we can bound the interior error for the parameter by
    \begin{equation*}
        \left\|t_\eta-t^*\right\|_{L^{\infty}\left(\Omega_d\right)} \leq K B\left( \varepsilon + Q(\mathcal{S}) \right) ^{\delta},
    \end{equation*}
    where the constant $K>0$ also depends on the quadrature rule.
\end{corollary}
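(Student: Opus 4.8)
The plan is to transfer the a-posteriori parameter bound from the analytic loss $\cJ$ to the empirical loss $\mathcal{J}_N$ by controlling their discrepancy through the quadrature error, and then to invoke \Cref{res:cor:estimate_param_loss}. The structure mirrors the passage from the noiseless to the noisy bound in \Cref{thm:approx-error-shear-modulus}: there the gap between $\cJ$ and $\cJ^\delta$ was the noise term, here it is the quadrature term $Q(\mathcal{S})$.

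First I would bound the gap between the analytic loss $\cJ(\mvect\theta,\mvect\eta)$ and the empirical loss $\mathcal{J}_N(\mvect\theta,\mvect\eta\mid\mathcal{S})$. Writing both as the same weighted sum of the three residual terms and applying the triangle inequality term by term, the difference is controlled by the three quadrature discrepancies of \cref{opt:eqn:quadrature-shear-modulus}. Since each is $\mathcal{O}(N_i^{-\alpha_i})$, there is a constant $C_Q>0$ (the maximum of the hidden $\mathcal{O}$-constants) such that
\[
\left| \cJ(\mvect\theta,\mvect\eta) - \mathcal{J}_N(\mvect\theta,\mvect\eta\mid\mathcal{S}) \right| \leq C_Q \left( \lambda_{PDE} N_1^{-\alpha_1} + \lambda_{BCN} N_2^{-\alpha_2} + \lambda_{DATA} N_3^{-\alpha_3} \right) = C_Q\, Q(\mathcal{S}).
\]
Combined with the hypothesis $\mathcal{J}_N(\mvect\theta,\mvect\eta\mid\mathcal{S})\leq\varepsilon$, this yields the pointwise bound $\cJ(\mvect\theta,\mvect\eta) \leq \varepsilon + C_Q\, Q(\mathcal{S})$.

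Next I would apply \Cref{res:cor:estimate_param_loss}, which gives $\norm{t_\eta-t^*}_{L^\infty(\Omega_d)} \leq C\,B\!\left(\cJ(\mvect\theta,\mvect\eta)\right)^\nu$, and use the monotonicity of $B(\cdot)^\nu$ to obtain $\norm{t_\eta-t^*}_{L^\infty(\Omega_d)} \leq C\,B\!\left(\varepsilon + C_Q\, Q(\mathcal{S})\right)^\nu$. To reach the constant-free argument asserted in the statement, I would recall the properties of $B(\cdot)^\nu$ already used in \Cref{thm:approx-error-shear-modulus}: it is concave, increasing, and vanishes at $0$, hence subadditive and satisfying $B(cx)^\nu \leq c\,B(x)^\nu$ for every $c\geq 1$. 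Estimating $\varepsilon + C_Q\, Q(\mathcal{S}) \leq \max\{1,C_Q\}\,(\varepsilon + Q(\mathcal{S}))$ and pulling the factor out then gives the claimed bound with $K = C\max\{1,C_Q\}$, which indeed depends on the quadrature rule through $C_Q$.

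I expect the only delicate point to be the passage from $B\!\left(\varepsilon + C_Q\, Q(\mathcal{S})\right)^\nu$ to a constant multiple of $B\!\left(\varepsilon + Q(\mathcal{S})\right)^\nu$: this is exactly where the concavity and $B(0)=0$ properties are essential, since without them the quadrature constant $C_Q$ could not be extracted from the nonlinear growth function. Everything else is routine: the term-by-term triangle inequality is immediate, and the final invocation of \Cref{res:cor:estimate_param_loss} is direct once $\cJ(\mvect\theta,\mvect\eta)$ has been bounded.
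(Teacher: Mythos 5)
Your proposal is correct and follows essentially the same route as the paper: bound $\cJ(\mvect\theta,\mvect\eta)\leq\varepsilon+C_Q\,Q(\mathcal{S})$ via the quadrature assumption \cref{opt:eqn:quadrature-shear-modulus}, then apply the stability/loss bound through $B(\cdot)^\nu$ and absorb the quadrature constant into the final constant $K$. You are in fact more careful than the paper at the last step, explicitly invoking concavity, monotonicity and $B(0)=0$ to justify extracting $C_Q$ from inside the nonlinear growth function, a step the paper states without comment.
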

\begin{proof}
    By (\ref{opt:eqn:quadrature-shear-modulus}), we have 
    \begin{equation*}
        \mathcal{J}(\mvect \theta, \mvect \eta) \leq \cJ_N(\mvect \theta,\mvect \eta | \mathcal{S}) 
        + K Q(\mathcal{S}) \leq \varepsilon + C Q(\mathcal{S}),
    \end{equation*}
    where $K>0$ depends on the quadrature rule.
    Hence, by the estimate of \cref{res:eqn:stability_estimate}, we have 
    \begin{equation*}
        \left\|t_\eta-t^*\right\|_{L^{\infty}\left(\Omega_d\right)} \leq CB(\varepsilon + KQ(\mathcal{S}))^\delta \leq K B\left( \varepsilon + Q(\mathcal{S}) \right) ^{\delta},
    \end{equation*}
    where the constant $K>0$ now combines the constant from the stability estimate of \cref{res:eqn:stability_estimate} as well as the constant from the quadrature rule.
\end{proof}

\subsection{Mesh-based approximations}
This section introduces a mesh-based function space approximation as used in FE methods. For this case, we consider a quasi-uniform triangulation of the domain which, for simplicity, will be the same both for the state and the parameter space. 

We use the following interpolation result from \cite[Theorem 4.28]{grossmann_numerical_2007}.

\begin{theorem}\label{met:thm:fem-approx}
    Let $(T_j)_j$ be a quasi-uniform triangulation of $\Omega$ with mesh size $h=\max_j h_j$, where $h_j>0$ denotes the diameter of $T_j$. 
    Further, for $u \in H^{k}(\Omega)$, denote by $\mathcal P_{k-1}u$ its projection onto the space of piecewise polynomials of degree at most $k-1$ defined over the triangulation $(T_j)_j$. 
    Additionally, for $0 \leq r \leq k$, assume that $\mathcal P_{k-1} u \in H^r(\Omega)$.
    Then, there exists a constant $c>0$ such that
    \[
    \norm{u-\mathcal P_{k-1}u}_{H^r(\Omega)} \leq c h ^ {k-r} |u|_{H^{k}(\Omega)}.
    \]
\end{theorem}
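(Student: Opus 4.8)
\Cref{met:thm:fem-approx} is the classical local-to-global finite element interpolation estimate, and I would prove it by the standard three-step strategy: a polynomial approximation estimate on a fixed reference element, an affine scaling argument transferring it to each physical element, and a summation over the triangulation exploiting quasi-uniformity. Throughout I would assume the local projection is affine-equivalent, i.e. $(\mathcal P_{k-1} u)|_{T_j} \circ F_j = \hat \Pi (u \circ F_j)$ for the affine element map $F_j$, which is the standard property of nodal/Lagrange spaces and makes the reference reduction legitimate.

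First I would fix a reference simplex $\hat T$ and let $\hat \Pi$ denote the local projection onto polynomials of degree at most $k-1$ on $\hat T$, so that it reproduces such polynomials, $\hat \Pi q = q$ for every $q$ of degree $\leq k-1$. The core estimate is then the Bramble--Hilbert lemma: since $\mathrm{Id} - \hat \Pi$ is a bounded linear operator on $H^k(\hat T)$ that annihilates the space of polynomials of degree $\leq k-1$, one obtains $\norm{\hat u - \hat \Pi \hat u}_{H^r(\hat T)} \leq C_{\hat T}\, |\hat u|_{H^k(\hat T)}$ for all $\hat u \in H^k(\hat T)$, with a constant depending only on $\hat T$, $k$, and $r$. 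This is the conceptual heart of the argument; it rests on the Deny--Lions quotient-norm characterisation, whereby $\inf_{q}\norm{\hat u - q}_{H^k(\hat T)}$ over polynomials $q$ of degree $\leq k-1$ is equivalent to the seminorm $|\hat u|_{H^k(\hat T)}$ via a compactness argument.

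Next, for each element $T_j$ I would write the affine map $F_j(\hat x) = B_j \hat x + b_j$ with $F_j(\hat T) = T_j$, and transfer the estimate using the standard Sobolev scaling bounds $|v \circ F_j|_{H^m(\hat T)} \leq C\, \norm{B_j}^m |\det B_j|^{-1/2}\, |v|_{H^m(T_j)}$ and its inverse. Shape regularity of the quasi-uniform mesh gives $\norm{B_j} \leq C h_j$, $\norm{B_j^{-1}} \leq C h_j^{-1}$, and $|\det B_j| \sim h_j^d$; composing the reference estimate with these scalings and the affine-equivalence of $\hat\Pi$ yields the local bound $|u - \mathcal P_{k-1} u|_{H^m(T_j)} \leq C\, h_j^{k-m}\, |u|_{H^k(T_j)}$ for every $0 \leq m \leq r$, with a constant independent of $j$ and $h_j$.

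Finally, summing the squares over all elements and using $h_j \leq h \leq 1$ (so that $h_j^{k-m} \leq h^{k-r}$ for $m \leq r$) together with $\sum_j |u|_{H^k(T_j)}^2 = |u|_{H^k(\Omega)}^2$ delivers $\norm{u - \mathcal P_{k-1} u}_{H^r(\Omega)} \leq c\, h^{k-r}\, |u|_{H^k(\Omega)}$ after taking square roots. I expect the main obstacle to be the Bramble--Hilbert step on the reference element, since the scaling and assembly are essentially bookkeeping once uniform shape regularity is in hand; the global hypothesis $\mathcal P_{k-1} u \in H^r(\Omega)$ is precisely what guarantees the cross-interface conformity needed for the global $H^r$ norm to decompose as the sum of the elementwise contributions.
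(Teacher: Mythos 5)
The paper offers no proof of \Cref{met:thm:fem-approx}: it imports the statement directly from the cited reference (Grossmann--Roos, Theorem 4.28), so there is no in-paper argument to compare against. Your three-step derivation --- Deny--Lions/Bramble--Hilbert on a reference simplex, affine scaling under shape regularity, and elementwise summation using the assumed global $H^r$-conformity of $\mathcal P_{k-1}u$ to reassemble the broken norm --- is the standard and correct proof of this classical estimate, and your closing remark correctly identifies why the hypothesis $\mathcal P_{k-1}u\in H^r(\Omega)$ is needed. The one caveat worth recording is that the Bramble--Hilbert step presupposes that the local operator $\hat\Pi$ is bounded on $H^k(\hat T)$; for nodal Lagrange interpolation in three dimensions this requires $k>3/2$ so that $H^k(\hat T)\hookrightarrow \mathcal C(\bar{\hat T})$, which is harmless here since the paper only invokes the theorem for $k\geq 3$ (and is automatic if $\mathcal P_{k-1}$ is an $L^2$-type projection).
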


For later purposes, we need $\mathcal P_{k-1} u \in H^2(\Omega)$, which requires $\mathcal C^1$-conforming FE methods. 
A classical approach is to use the TUBA family \cite{argyris_tuba_1968}, which was originally introduced for 2D problems and whose extension to three dimensions requires high computational efforts. 
B-splines provide an attractive alternative for the function space approximation, see e.g. \cite{hollig_finite_2003}.
Nonetheless, they suffer from high curvature if not tuned properly.
The work \cite{karnakov_solving_2023} describes how classical FE like ansatz spaces can be used in a least-squares optimisation framework. 
In particular, the authors use a finite-volume discretisation. 
However, we emphasise that this does not fall into this framework due to the lack of the required regularity.

\subsubsection{Mesh-based discretisation errors}\label{sec:mesh-based-error}

We also want to state an alternative version of \Cref{thm:approx-error-shear-modulus} using \Cref{met:thm:fem-approx} instead of the approximation with neural networks.

\begin{theorem}[Approximation error for the parameter, mesh-based]\label{thm:approx-error-shear-modulus-mesh}
    Let \Cref{met:ass:domain}--\ref{met:ass:elasticity_boundedness} and \Cref{res:ass:joint_bound} hold and assume the (possibly increased) regularity $\vect u^* \in H^k(\Omega, \R^3)$ and $t^* \in H^k(\Omega)$ with $k \geq 3$.
    Let $\varepsilon > 0$ be a given level of tolerance. Then, there exists a mesh size $h>0$ and a corresponding FE space $\mathcal P_{k-1}H^k(\Omega)$ such that for the minimisers $\hat \bu_\theta \in P_{k-1}H^k(\Omega)^3$, $\hat t_\eta \in P_{k-1}H^k(\Omega)$ of
    \[
    \min_{\substack{\bu _\theta \in P_{k-1}H^k(\Omega)^3 \\ t_\eta \in P_{k-1}H^k(\Omega)}} \mathcal J^\delta(\boldsymbol \theta, \boldsymbol \eta),
    \]
    we achieve
    \begin{equation*}
        \left\|\hat t_\eta-t^*\right\|_{L^{\infty}\left(\Omega_d\right)} \leq \varepsilon + CB(4\max\{\lambda_{BCN}, \lambda_{DATA}\}\delta^2)^\nu.
    \end{equation*}

    In particular, we get for the number of nodes $N=\mathcal O(h^{-3})$
    \begin{equation}\label{res:eqn:h-scaling}
        N = \Omega \left( \epsilon ^{-12/(\nu(k-2))} \right).
    \end{equation}
\end{theorem}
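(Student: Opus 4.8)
The plan is to replicate the argument of \Cref{thm:approx-error-shear-modulus} almost verbatim, replacing the tanh-network approximation rate of \Cref{met:cor:dsrnrate} by the finite-element interpolation estimate of \Cref{met:thm:fem-approx}. First I would apply \Cref{met:thm:fem-approx} with $r=2$ to the interpolants $\mathcal P_{k-1}\vect u^*$ and $\mathcal P_{k-1}t^*$, which — since $k\geq 3$ and hence $k-2\geq 1$ — yields the positive-rate approximation bounds
\[
\norm{\mathcal P_{k-1}\vect u^* - \vect u^*}_{H^2(\Omega)} = \mathcal O(h^{k-2}), \qquad
\norm{\mathcal P_{k-1}t^* - t^*}_{H^2(\Omega)} = \mathcal O(h^{k-2}).
\]
In contrast to the network case, no logarithmic or $\rho$-correction appears here, since the interpolation rate $h^{k-2}$ is free of the $\log^2\tilde N$ factor present in \Cref{met:cor:dsrnrate}; this is exactly why the final exponent in \cref{res:eqn:h-scaling} is the clean $\nu(k-2)$, valid also for $k=3$.

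Next I would insert these bounds into \Cref{pro:lem:lossbound} to control the noiseless loss,
\[
\cJ(\mathcal P_{k-1}\vect u^*, \mathcal P_{k-1}t^*) \leq C\big(\norm{\mathcal P_{k-1}\vect u^* - \vect u^*}_{H^2(\Omega)}^2 + \norm{\mathcal P_{k-1}t^* - t^*}_{H^1(\Omega)}^2\big) = \mathcal O(h^{2(k-2)}) =: \tilde\varepsilon,
\]
and then feed $\tilde\varepsilon$ into \Cref{opt:lem:optbound} and the stability estimate \cref{res:eqn:stability_estimate}, exactly as in the noiseless part of \Cref{thm:approx-error-shear-modulus}, to obtain $\norm{\mathcal P_{k-1}t^* - t^*}_{L^\infty(\Omega_d)} \leq CB(\tilde\varepsilon)^\nu$. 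Choosing the mesh size $h$ so small that $CB(\tilde\varepsilon)^\nu \leq \varepsilon$ and using $B(\tilde\varepsilon) = \mathcal O(\tilde\varepsilon^{1/8})$ gives $\tilde\varepsilon = \mathcal O(\varepsilon^{8/\nu})$, hence $h = \mathcal O(\varepsilon^{4/(\nu(k-2))})$; the node-count scaling $N = \mathcal O(h^{-3})$ for a quasi-uniform triangulation of the three-dimensional domain then produces precisely $N = \Omega(\varepsilon^{-12/(\nu(k-2))})$, which is \cref{res:eqn:h-scaling}.

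The passage to the noisy problem and to the genuine minimisers $\hat{\vect u}_\theta, \hat t_\eta$ is identical to the corresponding step in \Cref{thm:approx-error-shear-modulus}: since these minimise $\cJ^\delta$ over the FE space, we have $\cJ(\hat{\vect u}_\theta, \hat t_\eta) \leq \cJ(\mathcal P_{k-1}\vect u^*, \mathcal P_{k-1}t^*) + 4\max\{\lambda_{BCN}, \lambda_{DATA}\}\delta^2$, and applying the stability estimate to $\hat t_\eta$ together with the subadditivity of the concave, monotone map $B(\cdot)^\nu$ (which holds because $B(0)=0$) splits the bound into $CB(\tilde\varepsilon)^\nu + CB(4\max\{\lambda_{BCN}, \lambda_{DATA}\}\delta^2)^\nu$, from which the claimed estimate follows after absorbing $CB(\tilde\varepsilon)^\nu \leq \varepsilon$.

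The main obstacle — indeed the only genuinely new point relative to \Cref{thm:approx-error-shear-modulus} — is guaranteeing that the interpolation estimate is available in the $H^2$ norm, i.e. with $r=2$. This requires $\mathcal P_{k-1}\vect u^*, \mathcal P_{k-1}t^* \in H^2(\Omega)$ and hence a $\mathcal C^1$-conforming finite-element space (such as the TUBA elements or the B-spline spaces discussed above); with a merely $H^1$-conforming space the PDE residual entering $\cJ$ would not even be well defined, so \Cref{pro:lem:lossbound} could not be invoked. A secondary technical point is that the stability estimate \cref{res:eqn:stability_estimate} must be applicable to the minimiser $\hat t_\eta$, that is, $\hat t_\eta$ must satisfy \Cref{met:ass:elasticity_boundedness}; this is ensured, as in \Cref{opt:cor:globalconvergence}, by restricting the optimisation to the admissible set (equivalently, by the a-priori bounds built into the FE space), so that the remainder of the argument transfers without further change.
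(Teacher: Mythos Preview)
Your proposal is correct and follows essentially the same route as the paper's proof: apply the FE interpolation estimate of \Cref{met:thm:fem-approx} (with the same mesh size for state and parameter) in place of the tanh-network rate, plug into \Cref{pro:lem:lossbound} to get $\cJ\leq\tilde\varepsilon$ with $h=\mathcal O(\tilde\varepsilon^{1/(2(k-2))})$, and then conclude exactly as in \Cref{thm:approx-error-shear-modulus}, including the passage to the noisy minimiser via \cref{eqn:loss-estimate-noisy-noiseless} and the subadditivity of $B(\cdot)^\nu$. Your remarks on the absence of the $\rho$-correction and the need for $\mathcal C^1$-conforming elements are accurate and mirror the paper's surrounding discussion.
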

\begin{proof}
    We again consider the noiseless problem first.
    Assuming the same mesh size $h>0$ for the state and the parameter leads to, using \Cref{met:thm:fem-approx},
    \begin{equation}\label{opt:eqn:approxerrorestimate-mesh}
            \cJ(\mvect \theta,\mvect \eta) \leq
            C \left( \norm{\mathcal P _{k-1} \vect u - \vect u^*}_{H^2(\Omega)}^2 + \norm{\mathcal P _{k-1} t - t^*}_{H^1(\Omega)}^2 \right) \leq \tilde{\varepsilon},
        \end{equation}
    with a given tolerance level $\tilde \varepsilon > 0$, where we have  
        \begin{equation*}
            h = \mathcal O \left( \tilde \epsilon ^{1/(2(k-2))} \right).
        \end{equation*}
        
    We come now to the same conclusion as in \Cref{thm:approx-error-shear-modulus} with the rate
        \begin{equation*}
            h = \mathcal O \left( \epsilon ^{4/((k-2)\nu)} \right).
        \end{equation*}

    Since we can also use the estimate of \cref{eqn:loss-estimate-noisy-noiseless} and subadditivity as done in \Cref{thm:approx-error-shear-modulus}, we get the same decomposition for the noisy problem.
\end{proof}

It is worth noting that the exponent in \cref{res:eqn:h-scaling} appears, at first glance, to be independent of the spatial dimension. 
However, when reformulated in terms of the number of elements, a dimension-dependent behaviour becomes evident.

Moreover, a subtle distinction emerges in the numerical implementation when comparing \Cref{thm:approx-error-shear-modulus} and \Cref{thm:approx-error-shear-modulus-mesh}. 
In the PINNs framework, increased regularity of the ground-truth solution automatically leads to improved convergence rates. 
In contrast, the mesh-based setting requires the projection operator $\mathcal P_{k-1}$ to map onto higher-degree polynomial spaces that are at least $\mathcal C^1$-conforming in order to exploit such regularity. 
As a result, one must assume maximal smoothness of the ground-truth solution a priori, without being able to leverage any additional regularity that may be present.

It may be tempting to apply the same least-squares framework with a FE ansatz space, which falls into the class of least-squares FE methods.
As discussed in detail in \cite{gunzburger_least-squares_2009}, the data space norm plays a crucial role in this context. 
When using $L^2$ norms, the resulting solutions can vary depending on the conformity of the FE space. 
Specifically, $\mathcal C^1$-conforming elements recover the intended solution, whereas elements with lower regularity, which correspond to subspaces of $H^1$, effectively solve a fourth-order problem instead. 
This reveals once more one of the main challenges when using FE in this least-squares setting—namely the construction of highly regular elements.

\section{Numerical Results}
Having established a-posteriori error estimates that account for both approximation error and noise levels, we now demonstrate the reconstruction capabilities of the proposed approach through a simple numerical example. 
Specifically, we consider an in-silico generated forward problem and study how the accuracy of the reconstructed parameter depends on the number of degrees of freedom in the discretisation and the imposed noise level.

\textbf{The forward problem.} The governing constitutive equation is the isotropic model given in \cref{met:eqn:isotropic_law}. The domain is a cuboid defined as  $\bar \Omega = [0, 2] \times [0, 2] \times [0, 1] \si{\milli \metre}$. The boundary conditions for the forward problem are summarised in \Cref{num:tab:bclinear}. The Lamé's first parameter is fixed at $\lambda=\SI{650.0}{\kilo \pascal}$, and the ground-truth shear modulus is defined as
\[
\mu^*(x) = \begin{cases}
    \SI{8}{\kilo \pascal}, & \text{ if } x_1 < x_2, \\
    \SI{16}{\kilo \pascal}, & \text{ for } x_1 \geq x_2.
\end{cases}
\]
The ground-truth displacement data is generated using the Firedrake FE package in Python. 
The code to generate the forward problem as well as the implementation of the various methods for the inverse problem are available under \url{https://github.com/HoeflerMatthias/LinearElasticity.git}. 
We use linear Lagrange elements defined on a uniform cubic mesh with $80 \times 80 \times 40$ elements, leading to a resolution of \SI{0.025}{\milli\metre} in each direction. 
Noise is only considered for the state, resulting in a noisy data source $\bu(\delta)=\bu^*+\sigma \boldsymbol \varepsilon$ where $\varepsilon$ is a standard normally distributed random field with variance given by
$
\sigma = \delta\norm{\bu^*}_\infty/3,
$
where we consider the noise levels $\delta \in \{0.01, 0.05, 0.1\}$.

\begin{table}[ht]
    \centering
    \begin{tabular}{cccl}
        \toprule
        \textbf{face} & \textbf{symbol} & \textbf{type} & \textbf{data} \\ 
        \midrule
        $x=0$  & $\Gamma_{x^-}$ & Dirichlet  & \; $u_1=0$       \\ 
        $x=2$  & $\Gamma_{x^+}$ & Neumann    & \; $\vect p= \vect 0$       \\ 
        $y=0$  & $\Gamma_{y^-}$ & Dirichlet  & \; $u_2=0$       \\ 
        $y=2$  & $\Gamma_{y^+}$ & Neumann    & \; $\vect p= \vect 0$       \\ 
        $z=0$  & $\Gamma_{z^-}$ & Dirichlet  & \; $u_3=0$       \\ 
        $z=1$  & $\Gamma_{z^+}$ & Neumann    & \; $\vect p=-p \vect e_3$       \\ 
        \bottomrule
    \end{tabular}
    \caption{Boundary conditions for the linear elasticity example. The amplitude of the traction vector is given by $p=\SI{10.0}{\kilo \pascal}$.}
    \label{num:tab:bclinear}
\end{table}

\begin{figure}[t]
    \centering
    \includegraphics[width=0.8\linewidth]{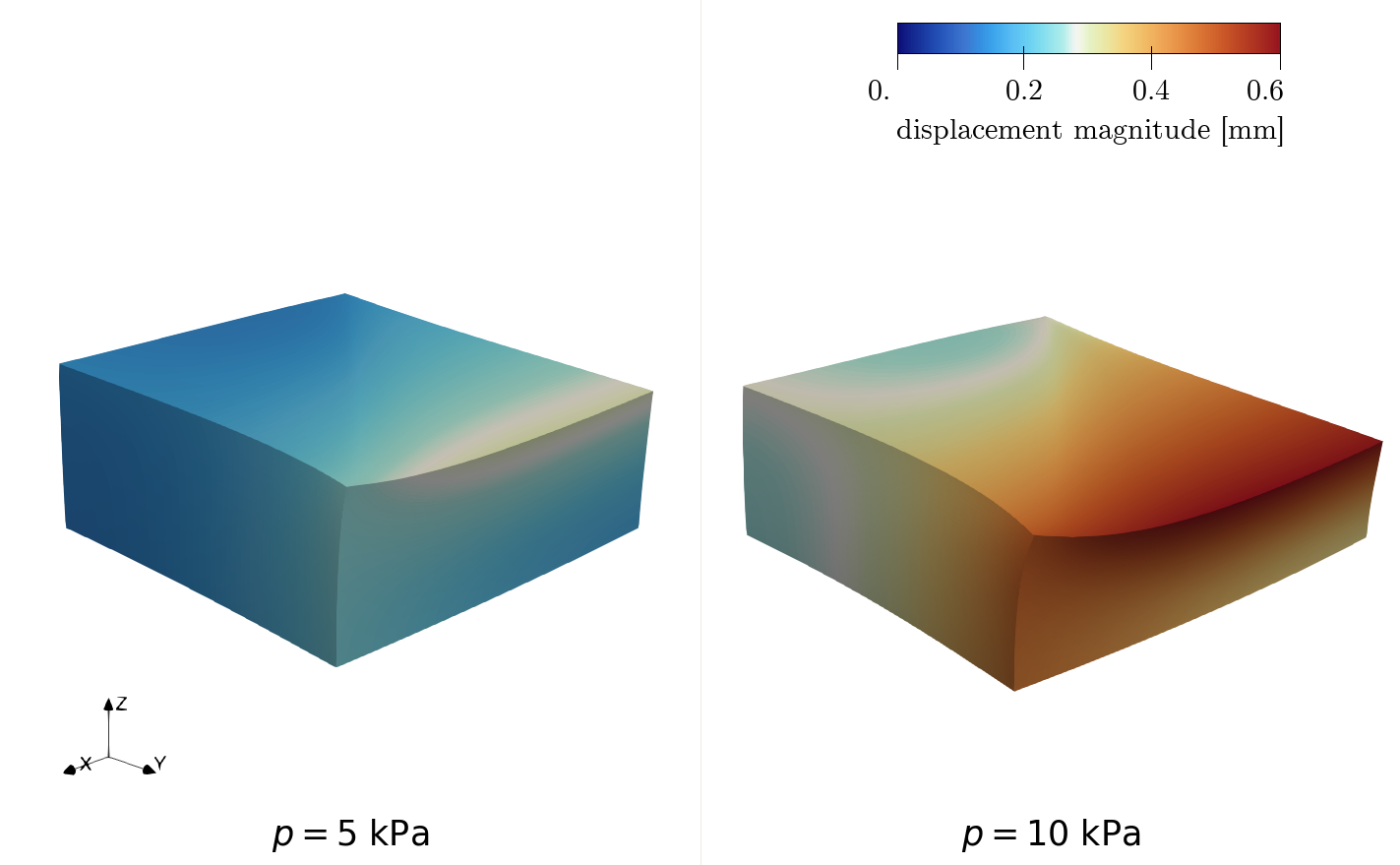}
    \caption{Deformed configuration according to the generated ground-truth displacement $\vect u^*$ with different loads applied on the top surface.}
    \label{fig:placeholder}
\end{figure}

\textbf{The inverse problem.} Assuming access to the (noisy) displacement data and full knowledge of the boundary conditions and Lamé's first constant $\lambda$, the objective is to reconstruct the spatially varying shear modulus $\mu$.

In practical applications, measurement data is often sparse or limited. To model scarcity, we define the observational operator
\[
\mathcal O(\bu(\delta)) = \sum_{i=1}^{N_\text{DATA}} \chi_{\Omega_i}\bu(\delta)_i,
\]
where each $\Omega_i$ denotes a measurable subdomain over which the measurement is supported, and $\chi_{\Omega_i}$ is the corresponding indicator function. 
This operator effectively encodes piecewise constant observations of the field $\bu(\delta)$. 
In the context of PINNs, a common sampling strategy is to set $|\Omega_i|=1/N_{\text{DATA}}$, reflecting uniformly distributed point-wise data. 
In contrast, for FE methods, each region $\Omega_i$ corresponds to the support of an individual element. 
Thus, it naturally inherits its geometric structure.

To quantify accuracy, we define two error metrics. The state error is measured relative to the ground-truth solution as
\[
\epsilon_{\vect u; \text{rel}} = \frac{\norm{\vect u - \vect u^*}_{L^2(\Omega,\R^3)}}{\norm{\vect u^*}_{L^2(\Omega,\R^3)}},
\]
where the spatial discretisation is according to the ground-truth mesh. For the parameter, we define a notion of error via
\[
\epsilon_{\mu; \text{rel}} = \frac{1}{|\Omega|^{1/2}}\norm{\frac{\mu - \mu^*}{\mu^*}}_{L^2(\Omega)},
\]
which is pointwise in the sense that we compute the relative error separately for each point in the domain before integrating over it. 
This contrasts with defining a relative error using two separate integrals for the numerator and denominator.
Note that the factor $|\Omega|^{-1/2}$ allows to interpret $\epsilon_{\mu; \text{rel}}$ as a relative RMSE, hence it is suitable to direct approximation with a sample mean w.r.t. a uniform distribution over the mesh.

\begin{remark}
    While this notion of pointwise, relative error measured in the $L^2(\Omega)$ norm is not directly related to the $L^\infty(\Omega_d)$ bounds derived so far, it is of highly practical interest since it also gives a notion on the pointwise amplification. 
    Moreover, since $\min_{x \in \bar \Omega} |\mu^*(x)|>0$, it can be easily upper-bounded by the $L^\infty(\Omega)$ norm of the absolute deviation from above. 
\end{remark}

\subsection{PINNs approach}\label{sec:pinn_num}
We begin our investigation of the inverse problem by employing neural network-based ansatz spaces. 
One of the primary challenges in PINNs lies in minimising the optimisation error. 
This issue becomes particularly significant in the context of inverse problems, where the selection of appropriate loss weights for each residual term greatly impacts parameter identification—especially in the presence of noise~\cite{hofler_physics-informed_2025}. 
Both increasing network size and noise level can lead to an increase in optimisation error due to the resulting complexity of the loss landscape. 

To mitigate the influence of optimisation error on our numerical results, we incorporate several standard modifications to the vanilla PINN training algorithm \cite{anagnostopoulos_residual-based_2024, tancik_fourier_2020, sukumar_exact_2022}. 
For a comprehensive discussion on the training procedure, we refer the reader to \cite{hofler_physics-informed_2025}. 
Here, we briefly highlight the key elements. 
The optimisation procedure follows a 2x2 step scheme: In a first data-only phase we fit the available displacement data to the network. 
In a second phase, we use both the data and the physics regulariser. 
Each phase consists of a fixed number of iterations using the Adam optimiser, followed by a second stage employing the BFGS optimiser. 
We emphasise the use of boundary layers \cite{sukumar_exact_2022} to impose Dirichlet conditions, thereby yielding ansatz spaces similar in structure to those derived from FE spaces.

For each of the two networks $\bu_\theta$ and $\mu_\eta$, we employ tanh activation functions and three hidden layers with equal widths $N_{\bu}$ and $N_\mu$, respectively. We assume scarce measurements of the displacement field $\bu(\delta)$, which are given by $N_{\text{DATA}}=\num{1000}$ randomly sampled points in the domain.

\Cref{tab:pinnresults} lists results for each combination of network size and noise level. 
Each of the entries denotes an average over five different seeds to take into account different network initialisations.
Interestingly, the limited improvement observed with increasing $N_\mu$ suggests that the spatial variability of the shear modulus can be effectively captured by relatively simple models. 
As illustrated in \Cref{fig:pinnresults}, increasing the network size for the shear modulus can, in some cases—particularly under higher noise—lead to a slight degradation in the reconstruction quality. 
This trend indicates not only a rise in optimisation error but also a decrease of the regularising effect of the discretisation. 
Furthermore, model accuracy is more strongly influenced by the quality of the displacement field approximation than by the expressiveness of the parameter field network.

In fact, the displacement network size has a marked effect on the relative error in shear modulus estimation: The error decreases from approximately $\SI{20}{\percent}$ for small networks ($N_\bu=4$) to around $\SI{8}{\percent}$ for the largest tested network ($N_\bu=32$). 
Moreover, the impact of noise becomes more pronounced for larger parameter networks than for smaller ones, as also seen in \Cref{fig:pinnresults}. 
{
\sisetup{
  round-mode = places,
  round-precision = 3,
  round-pad = false,          
  group-digits = false, 
  output-decimal-marker = {.}
}
\begin{table}[t]
    \centering
    \begin{tabular}{ccc|cc||ccc|cc}
         $ \mathbf N_{\mathbf u}$ & $\mathbf N_{\mu}$ & \textbf{noise level} & $\boldsymbol \epsilon_{\vect u; \textbf{rel}}$ &  $\boldsymbol \epsilon_{\mu; \textbf{rel}}$ &
         $ \mathbf N_{\mathbf u}$ & $\mathbf N_{\mu}$ & \textbf{noise level} & $\boldsymbol \epsilon_{\vect u; \textbf{rel}}$ &  $\boldsymbol \epsilon_{\mu; \textbf{rel}}$
         \\ \midrule
         \num{4} & \num{2} & \SI{1}{\percent} & \num{0.02502} & \num{0.20579}      & \num{16} & \num{2} & \SI{1}{\percent} & \num{0.00447} & \num{0.10838} \\
          &  & \SI{5}{\percent} & \num{0.02534} & \num{0.19280}      &  &  & \SI{5}{\percent} & \num{0.00624} & \num{0.10888} \\
          &  & \SI{10}{\percent} & \num{0.03029} & \num{0.20099}      &  &  & \SI{10}{\percent} & \num{0.01005} & \num{0.11295} \\
          & \num{4} & \SI{1}{\percent} & \num{0.02742} & \num{0.24506}      &  & \num{4} & \SI{1}{\percent} & \num{0.00462} & \num{0.11120} \\
          &  & \SI{5}{\percent} & \num{0.02853} & \num{0.21891}      &  &  & \SI{5}{\percent} & \num{0.00619} & \num{0.11133} \\
          &  & \SI{10}{\percent} & \num{0.02621} & \num{0.18961}      &  &  & \SI{10}{\percent} & \num{0.01046} & \num{0.12211} \\
          & \num{8} & \SI{1}{\percent} & \num{0.02702} & \num{0.26634}      &  & \num{8} & \SI{1}{\percent} & \num{0.00402} & \num{0.11114} \\
          &  & \SI{5}{\percent} & \num{0.02611} & \num{0.20100}      &  &  & \SI{5}{\percent} & \num{0.00631} & \num{0.11354} \\
          &  & \SI{10}{\percent} & \num{0.02725} & \num{0.19577}      &  &  & \SI{10}{\percent} & \num{0.01069} & \num{0.12618} \\
          & \num{16} & \SI{1}{\percent} & \num{0.02516} & \num{0.18831}      &  & \num{16} & \SI{1}{\percent} & \num{0.00401} & \num{0.11040} \\
          &  & \SI{5}{\percent} & \num{0.02240} & \num{0.18333}      &  &  & \SI{5}{\percent} & \num{0.00627} & \num{0.11535} \\
          &  & \SI{10}{\percent} & \num{0.02574} & \num{0.18662}      &  &  & \SI{10}{\percent} & \num{0.01084} & \num{0.13177} \\
         \num{8} & \num{2} & \SI{1}{\percent} & \num{0.00907} & \num{0.13089}      & \num{32} & \num{2} & \SI{1}{\percent} & \num{0.00190} & \num{0.08260} \\
          &  & \SI{5}{\percent} & \num{0.00989} & \num{0.13314}      &  &  & \SI{5}{\percent} & \num{0.00649} & \num{0.09186} \\
          &  & \SI{10}{\percent} & \num{0.01255} & \num{0.13796}      &  &  & \SI{10}{\percent} & \num{0.01358} & \num{0.11042} \\
          & \num{4} & \SI{1}{\percent} & \num{0.00873} & \num{0.13303}      &  & \num{4} & \SI{1}{\percent} & \num{0.00183} & \num{0.08298} \\
          &  & \SI{5}{\percent} & \num{0.00956} & \num{0.13341}      &  &  & \SI{5}{\percent} & \num{0.00672} & \num{0.09861} \\
          &  & \SI{10}{\percent} & \num{0.01241} & \num{0.13942}      &  &  & \SI{10}{\percent} & \num{0.01192} & \num{0.11887} \\
          & \num{8} & \SI{1}{\percent} & \num{0.00853} & \num{0.13041}      &  & \num{8} & \SI{1}{\percent} & \num{0.00186} & \num{0.08345} \\
          &  & \SI{5}{\percent} & \num{0.00933} & \num{0.13254}      &  &  & \SI{5}{\percent} & \num{0.00717} & \num{0.10651} \\
          &  & \SI{10}{\percent} & \num{0.01226} & \num{0.13985}      &  &  & \SI{10}{\percent} & \num{0.01066} & \num{0.12042} \\
          & \num{16} & \SI{1}{\percent} & \num{0.00877} & \num{0.13281}      &  & \num{16} & \SI{1}{\percent} & \num{0.00185} & \num{0.08486} \\
          &  & \SI{5}{\percent} & \num{0.00902} & \num{0.12952}      &  &  & \SI{5}{\percent} & \num{0.00736} & \num{0.11188} \\
          &  & \SI{10}{\percent} & \num{0.01187} & \num{0.13864}      &  &  & \SI{10}{\percent} & \num{0.01087} & \num{0.12716} \\
        \hline
    \end{tabular}
    \caption{Summary of results for the PINNs-based approach. The table shows, for particular network sizes $N_{\bu}$ and $N_\mu$ and noise level $\delta$, the mean reconstruction error $\epsilon_{\mu; \text{rel}}$ through five different initialisations of the algorithm.}
    \label{tab:pinnresults}
\end{table}
}
\begin{figure}
    \centering
    \includegraphics[width=\linewidth]{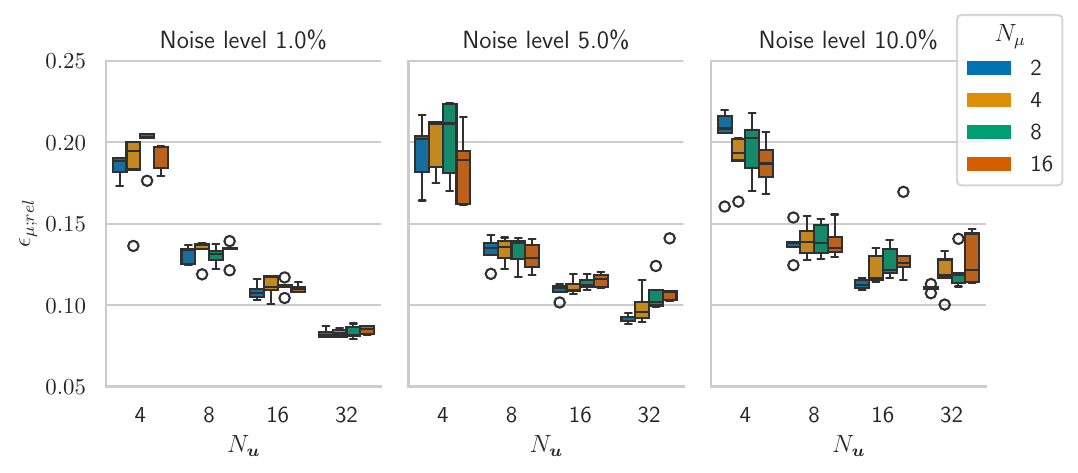}
    \caption{Box-plots of the relative error on $\epsilon_{\mu; \text{rel}}$ for each configuration. Each plot shows the results for a different noise level $\delta$. The x-axis indicates the width of the displacement network $N_\bu$ and the color the width of the shear modulus network $N_\mu$.}
    \label{fig:pinnresults}
\end{figure}

\subsection{Comparison with FE approaches}
As previously noted, constructing a consistent least-squares discretisation within the FE framework is non-trivial. 
Nevertheless, we can still aim to solve the inverse problem with a linear, localised ansatz space. We start by presenting a strategy to tackle the inverse problem in an all-at-once approach using FE ansatz spaces for the least-squares problem.
For the purpose of a sound comparison method, we also consider a solution strategy based on the weak formulation introduced in \cite{pozzi_reconstruction_2024}. However, in this work we compare not only against the proposed reduced formulation but also against an all-at-once variant of the same method.

In all approaches, we discretise the domain using a mesh with $10N \times 10N \times 5N$ elements, where $N \in \{1, 2, 4, 6\}$ serves as a proxy for the parameter count to compare different mesh sizes against one another, similar to $N_{\vect u}$ and $N_\mu$ in \Cref{sec:pinn_num}.

For comparison, we project the obtained displacement and parameter fields back to the finer mesh of the ground-truth solution.

Moreover, similar to \cite{pozzi_reconstruction_2024}, the shear modulus is reparametrised as $\mu(x) = \SI{8}{\kilo\pascal} \cdot \alpha(x)$ and we also consider additional regularisation on the parameter, denoted by $\mathcal{R}(\alpha)$. 
For the choices of regularisation, we consider three variants: $L^2$ norm $\mathcal R(\alpha)=\norm{\alpha}_{L^2(\Omega)}^2$, $H^1$ semi-norm $\mathcal R(\alpha)=\norm{\nabla \alpha}_{L^2(\Omega)}^2$, or a smoothed approximation of total variation (TV) $\mathcal R(\alpha)=\int_\Omega \sqrt{|\nabla \alpha|^2+\varepsilon}\;\mathrm dx$, as discussed in \cite{bredies_higher-order_2020}.

\subsubsection{Least-squares FE approach}
We start with the analogous approach to the PINNs framework. However, we use FE ansatz spaces instead. 
As noted in \cite{gunzburger_least-squares_2009}, solving the least-squares formulation in the $L^2$ norm over subspaces of $H^1$ does not lead to the desired solution. 
The key idea is to have a FE representation $\mathbf u_\theta \in H^1(\Omega)$ which we want to lift to a quasi-$\mathcal C^1$-conforming representation via regularisation. 
In particular, if we use Lagrange elements of order $r$, abbreviated as Pr, we get a continuous representation with piecewise continuous gradient. Hence, it comes natural to penalise the jump of the gradient between interior facets. 
More precisely, denote by $\mathcal F_h^{\text{int}}$ the set of all interior facets, and for a given facet $F\in \mathcal F_h^{\text{int}}$ its two neighbouring elements with $\Omega_i^+, \Omega_i^- \subset \Omega$, and with $\mathrm d s_F$ the corresponding facet measure. 
We define the jump at $F$ of a sufficiently smooth function $v$ as $[v]_F=v|_{\Omega_i^+}-v|_{\Omega_i^-}$. 
The proposed regulariser is defined as
\[
\mathcal R_{\mathcal C^1}(\bu) = \sum_{F \in \mathcal F_h^{\text{int}}} \int_F \norm{[\nabla \bu]_F}_2^2 \mathrm d s_F.
\]

Still, tackling the optimisation problem in an all-at-once manner remains difficult, both for the least-squares as well as for the weak formulation described in \Cref{sec:weak}. 
Let
\begin{equation}\label{eqn:reg-c1}
\mathcal L(\bu, \alpha) =  \mathcal J^\delta(\bu, \alpha) + 
\lambda \mathcal R(\alpha)
+ \lambda_{\mathcal C^1} \mathcal R_{\mathcal C^1}(\bu)
\end{equation}
be the modified objective, where the shear modulus is reparametrised as $\mu(x) = \SI{8}{\kilo\pascal} \cdot \alpha(x)$. We carry out optimisation with a block coordinate descent strategy \cite{doi:10.1137/100802001}. Namely, for given iterates $\bu^k, \alpha^k$, we solve
\[
\begin{cases}
    \bu^{k+1} \in \arg \min_{\bu} \mathcal L(\bu, \alpha^k), \\
    \alpha^{k+1} \in \arg \min_{\alpha} \mathcal L(\bu^{k+1}, \alpha).
\end{cases}
\]
The individual optimisation problems are solved with BFGS.

It is important to emphasise that achieving a reliable local minimum remains numerically challenging. In fact, the FE-based solution strategy exhibits even greater sensitivity to the choice of weighting parameters than the PINNs-based approach. 
Successful identification was only attainable under full-field measurements, i.e., when each finite element is associated with at least one observation. 
Notably, although the regularisation term in \cref{eqn:reg-c1} imposes only a soft constraint and does not enforce strict $\mathcal C^1$-conformity, its inclusion proved essential: in its absence, the identification process consistently failed. 
However, it was still only successful in cases with the smallest noise level $\delta=0.01$ and coarse grids $N \in \{1,2\}$. 
In more detail, for $N=1$, $\epsilon_{\mu;\text{rel}}=\num[round-mode = places, round-precision = 3]{0.157895}$, whereas for $N=2$, $\epsilon_{\mu;\text{rel}}=\num[round-mode = places, round-precision = 3]{0.172409}$. 
For every other setup, the reconstruction was considered as not successful in the sense that the relative error on $\mu$ was above $\SI{30}{\percent}$.

\begin{remark}
    An alternative route toward least-squares optimisation is proposed in \cite{karnakov_solving_2023}, where, rather than employing a function space approximation, the authors adopt an operator discretisation of the PDE. 
    Specifically, they use a finite difference discretisation of both the governing equations and boundary conditions, which naturally results in a matrix-based representation of the state and parameter fields. 
    Their framework, termed Optimizing a Discrete Loss (ODIL), is applied to a range of benchmark problems.

    In a recent extension, ODIL has been employed in medical imaging applications~\cite{balcerak_physics-regularized_2024}, where multi-modal data is combined with coupled physical models, including the equations of quasi-static elasticity. 
    While this approach demonstrates considerable flexibility in discrete optimisation, it does not align with our function space framework. 
    In particular, from the perspective of function space approximations, finite difference methods lack the $\mathcal{C}^1$-conformity required for the least-squares fromulation considered here. 
    As such, their direct application in our setting remains limited.
\end{remark}

\subsubsection{Weak formulation}\label{sec:weak}
To establish a solid baseline for comparison, we adopt a weak formulation of the inverse problem. 
In particular, we consider the reduced approach presented in \cite{pozzi_reconstruction_2024}, together with its reformulation as an all-at-once approach. 
This methodology is especially relevant to our setting, as it targets the same goal: the identification of spatially varying material properties in cardiac tissue.

We begin with a brief summary of their approach. The problem to solve is given by
\[
\min _{\boldsymbol{u}, \alpha} 
\frac{1}{2}\norm{\bu-\bu(\delta)}_{L^2(\Omega)}^2 + \lambda \mathcal R(\alpha), \quad \text { s.t. } \quad\left\{\begin{aligned}
-\nabla \cdot \mathbb C_\mu \symgrad \bu =0, & \text { in } \Omega, \\
\mat B\bu=0, & \text { on } \Gamma_D, \\
\mathbb C_\mu \symgrad \bu \cdot \mathbf {n}=\mathbf p, & \text { on } \Gamma_N, \\
\alpha \geq 0, & \text { in } \Omega .
\end{aligned}\right.
\]
where $\mat{B}$ denotes a selection matrix for the relevant displacement components, as defined in \Cref{num:tab:bclinear}.

Constraints are enforced via a Lagrange multiplier field $\mathbf{m}$, leading to the Lagrangian
\[
\mathcal{L}(\bu, \alpha, \mathbf{m})=
\frac{1}{2}\norm{\bu-\bu(\delta)}_{L^2(\Omega)}^2 + \lambda \mathcal R(\alpha)
+\int_{\Omega} \nabla \cdot \mathbb C_\mu \symgrad \bu: \nabla \mathbf{m} \; \mathrm{d} \mathbf{x}
-\int_{\Gamma_N} \mathbf p \cdot \mathbf m \;\mathrm{d} \mathbf{s}.
\]
The all-at-once approach seeks a stationary point of $\mathcal{L}$ by solving the optimality condition
\begin{equation}\label{eqn:aao_kkt}
\delta \mathcal{L}(\bu, \alpha, \mathbf{m}) = 0,
\end{equation}
which corresponds to solving the full set of Karush-Kuhn-Tucker (KKT) conditions, involving the primal, adjoint, and control equations, as detailed in \cite{pozzi_reconstruction_2024}. 
However, it is well known that solving \cref{eqn:aao_kkt} is numerically demanding \cite{haber2001preconditioned}, due to ill-conditioning and the risk of convergence to local minima.

We initialise the parameter field with a constant value $\alpha(x) = 1.5$, representing the average of two target regions. 
This initial guess is used to compute the forward solution $\bu$, which then serves as a starting point for the optimisation. 
The optimisation is carried out using a Newton method with line search, as implemented in the PETSc toolkit.

From the optimality condition \cref{eqn:aao_kkt}, one can also derive the parameter-to-solution map $\alpha \mapsto \bu$, leading to a reduced formulation of the problem, in which optimisation is performed over $\alpha$ only (see \cite{pozzi_reconstruction_2024} for derivation details). The same initialisation $\alpha(x) = 1.5$ is used here as well.

Since the variational methods exhibit sufficient stability, we apply the same sparse data setting as in the PINN approach, using $N_{\text{DATA}}=\num{1000}$ randomly chosen element locations.
{
\sisetup{
  round-mode = places,
  round-precision = 3,
  round-pad = false, 
  group-digits = false,
  output-decimal-marker = {.}
}
\begin{table}[t]
        \centering
        \begin{tabular}{cc|c|cc|c}
             $\mathbf N$ & \textbf{noise level} & \textbf{regularisation} & $\boldsymbol \epsilon_{\vect u; \textbf{rel}}$ &  $\boldsymbol \epsilon_{\mu; \textbf{rel}}$ & \textbf{no. iter} \\ \hline
             \num{1} & \SI{1}{\percent} & H1, {\num{5e-03}} & \num{0.110552} & \num{0.221435} & \num{4} \\
                 & \SI{5}{\percent} & L2, {\num{5e-03}} & \num{0.131032} & \num{0.220454} & \num{5} \\
                 & \SI{10}{\percent} & H1, {\num{5e-03}} & \num{0.114438} & \num{0.222031} & \num{4} \\
  \hline
         \num{2} & \SI{1}{\percent} & H1, {\num{1e-06}} & \num{0.010257} & \num{0.131662} & \num{8} \\
                 & \SI{5}{\percent} & H1, {\num{5e-06}} & \num{0.012441} & \num{0.138193} & \num{5} \\
                 & \SI{10}{\percent} & H1, {\num{1e-05}} & \num{0.013970} & \num{0.145347} & \num{4} \\
  \hline
         \num{4} & \SI{1}{\percent} & H1, {\num{1e-05}} & \num{0.007779} & \num{0.125957} & \num{4} \\
                 & \SI{5}{\percent} & H1, {\num{1e-05}} & \num{0.007758} & \num{0.125444} & \num{4} \\
                 & \SI{10}{\percent} & H1, {\num{1e-05}} & \num{0.008592} & \num{0.129754} & \num{4} \\
  \hline
         \num{6} & \SI{1}{\percent} & H1, {\num{1e-05}} & \num{0.006427} & \num{0.124112} & \num{4} \\
                 & \SI{5}{\percent} & H1, {\num{1e-05}} & \num{0.007078} & \num{0.124148} & \num{4} \\
                 & \SI{10}{\percent} & H1, {\num{1e-05}} & \num{0.008606} & \num{0.133016} & \num{4} \\
  \hline
        \end{tabular}
        \caption{Summary of results for the weak formulation in an all-at-once approach. The table shows, for a particular mesh size $N$ and noise level $\delta$, the best result for the reconstruction error $\epsilon_{\mu; \text{rel}}$ obtained through all different choices of regularisation. The last column indicates the number of BFGS iterations performed until the specified tolerance was achieved.}
        \label{tab:res-mesh-based-kkt}
    \end{table}

\begin{table}[t]
        \centering
        \begin{tabular}{cc|c|cc|c}
             $\mathbf N$ & \textbf{noise level} & \textbf{regularisation} & $\boldsymbol \epsilon_{\vect u; \textbf{rel}}$ &  $\boldsymbol \epsilon_{\mu; \textbf{rel}}$ & \textbf{no. iter} \\ \hline
             \num{1} & \SI{1}{\percent} & TV, {\num{5e-04}} & \num{0.096807} & \num{0.177228} & \num{146} \\
                 & \SI{5}{\percent} & TV, {\num{1e-04}} & \num{0.068991} & \num{0.172800} & \num{108} \\
                 & \SI{10}{\percent} & TV, {\num{5e-04}} & \num{0.096237} & \num{0.181044} & \num{142} \\
  \hline
         \num{2} & \SI{1}{\percent} & TV, {\num{5e-05}} & \num{0.028530} & \num{0.104526} & \num{207} \\
                 & \SI{5}{\percent} & TV, {\num{1e-05}} & \num{0.024554} & \num{0.110371} & \num{140} \\
                 & \SI{10}{\percent} & TV, {\num{5e-05}} & \num{0.028708} & \num{0.108724} & \num{201} \\
  \hline
         \num{4} & \SI{1}{\percent} & TV, {\num{5e-06}} & \num{0.007935} & \num{0.062640} & \num{254} \\
                 & \SI{5}{\percent} & TV, {\num{1e-05}} & \num{0.011829} & \num{0.080334} & \num{264} \\
                 & \SI{10}{\percent} & TV, {\num{5e-05}} & \num{0.016137} & \num{0.099767} & \num{404} \\
  \hline
         \num{6} & \SI{1}{\percent} & TV, {\num{5e-06}} & \num{0.004798} & \num{0.061650} & \num{443} \\
                 & \SI{5}{\percent} & TV, {\num{1e-05}} & \num{0.008422} & \num{0.069610} & \num{394} \\
                 & \SI{10}{\percent} & TV, {\num{5e-05}} & \num{0.017978} & \num{0.092760} & \num{525} \\
  \hline
        \end{tabular}
        \caption{Summary of results for the weak formulation in a reduced approach. The table shows, for a particular mesh size $N$ and noise level $\delta$, the best result for the reconstruction error $\epsilon_{\mu; \text{rel}}$ obtained through all different choices of regularisation. The last column displays the number of BFGS iterations carried out until the tolerance was reached.}
        \label{tab:res-mesh-based-red}
    \end{table}
}
A comparison between Tables \ref{tab:res-mesh-based-kkt} and \ref{tab:res-mesh-based-red} reveals that the reduced formulation consistently yields superior reconstruction accuracy for the parameter $\mu$, and comparable results for the state $\vect u$. 
The primary advantage of the all-at-once approach lies in its computational speed; however, this comes at the expense of solution stability. 
In practice, most choices of regularisation type and parametrisation for the all-at-once method result in instabilities, with only a few successful trials observed. 
Notably, while TV regularisation consistently outperforms other regularisation strategies in the reduced setting, it proved insufficiently robust for the all-at-once formulation in the majority of cases. 
This instability further highlights the ill-conditioning inherent to the all-at-once approach.

\Cref{fig:fieldcomp} presents a visual comparison of the best reconstructions achieved by the various methods. The PINNs approach yields a straight boundary, benefiting from a well-designed parameter field architecture, though it introduces slight blurring. Comparable reconstruction quality is attained only by the reduced weak formulation, which provides a sharper separation between regions due to TV regularisation, albeit with noticeable artifacts at the corners. The all-at-once weak formulation produces a blurrier yet still reasonable result, shaped primarily by the $H^1$ regularisation. In principle, TV could be beneficial, however, is not stable enough to be applied successfully. The least-squares FE method produces the weakest result. While it allows rough identification of the regions, it fails to capture the simplicity of the ground-truth parameter field.

\section{Conclusion}
We have established discretisation-invariant a-posteriori error estimates for a broad class of parameter identification problems in three-dimensional elasticity, with a particular focus on observational noise and approximation error. 
The theoretical findings also offer valuable guidance for the design of all-at-once optimisation schemes tailored to such inverse problems. 
In particular, they provide a rigorous justification for the use of neural network-based ansatz spaces within this setting.

Compared to FE methods, PINNs exhibit several methodological advantages, particularly when the target solution is smooth. Unlike classical FE approaches, which rely on explicitly constructed function spaces to enforce regularity, PINNs implicitly encode smoothness through their neural network architecture. 
This becomes especially advantageous in higher spatial dimensions, where constructing high-order FE spaces is technically challenging.

Moreover, we identified persistent challenges in extending analogous strategies to FE-based ansatz spaces. These limitations were not only analysed from a theoretical standpoint but also confirmed through numerical experiments. 
To contextualise our findings, we conducted a detailed numerical comparison with solution approaches based on weak formulations. The results demonstrated that the reduced formulation is more robust in mitigating numerical issues related to optimisation error than the all-at-once strategy.

From a computational standpoint, the all-at-once approach in the weak formulation appears to be the most efficient. However, this performance gain comes at the cost of increased instability and a limited set of viable regularisation strategies. 
Despite these challenges, the method holds promise for future developments, particularly as a pre-processing stage that could provide improved initialisations for more robust schemes.

The all-at-once formulation using PINNs exhibits properties comparable to those of the reduced weak formulation. 
While the latter yields slightly higher reconstruction accuracy, the former demonstrates superior computational efficiency—especially beneficial when high-resolution solutions are required.

In summary, future advancements may benefit from hybrid strategies that integrate the strengths of different formulations. 
Although a stable FE-based least-squares method remains elusive, reducing the nonlinearity in the optimisation problem within PINNs may help mitigate optimisation errors. 
Nevertheless, it remains an open question how to best combine PINNs with reduced formulations or linear ansatz spaces to fully leverage their respective advantages.

\begin{figure}[ht]
    \centering
    \includegraphics[width=0.8\linewidth]{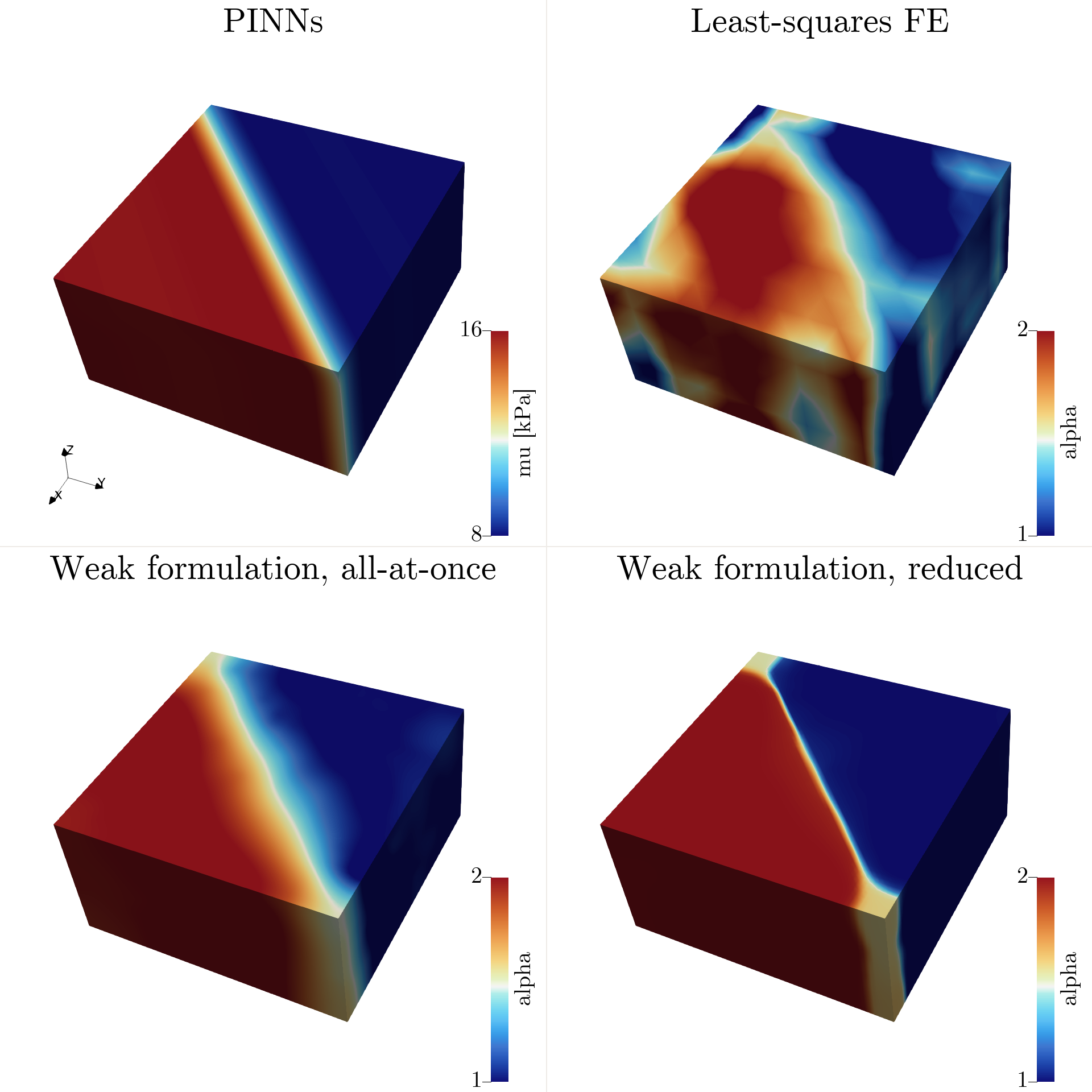}
    \caption{Comparison of the best reconstructed fields for the case $\delta=0.01$. The top left plot displays the PINNs reconstruction with $N_{\mathbf{u}} = 32$ and $N_\mu = 2$, with the color range clipped between $\SIrange{8}{16}{\kilo\pascal}$. The top right plot shows the least-squares FE approach with $N = 1$. The bottom left plot presents the weak formulation using the all-at-once approach for $N = 6$, while the bottom right plot illustrates the weak formulation using the reduced approach for $N = 6$. For the latter three plots, $\alpha$ is clipped between 1 and 2.}
    \label{fig:fieldcomp}
\end{figure}

\section{Acknowledgements}
The authors gratefully acknowledge the assistance of Francesco Regazzoni and Stefano Pagani for their support in the implementation of the PINNs algorithm, in particular for the use of their framework nisaba.
FEM simulations for this study were performed on the Vienna Scientific Cluster (VSC-5) under PRACE project \#72420, which is maintained by the ASC Research Center in collaboration with the Information Technology Solutions of TU Wien. 
FC is member of the INdAM research group GNCS.
FC acknowledges support from the Land Steiermark UFO Grant No. 3026.
This research was funded in whole or in part by the Austrian Science Fund (FWF) 10.55776/F100800.

\nocite{romer2025reduced}

\appendix

\section{Properties of the Loss Functional}

\begin{lemma}\label{opt:lem:optbound}
    For weights $\lambda_1, \lambda_2, \lambda_3 > 0$, and a tolerance level $\varepsilon >0$, the optimal value of the problem
    \begin{equation}\label{opt:eqn:optproblemsurrogate}
        \begin{cases}
            \max \varepsilon_1 + \varepsilon_2 + \varepsilon_3^{1/4} \\
            \text{subject to } \varepsilon_1, \varepsilon_2, \varepsilon_3 \geq 0 \text{ and } \lambda_1 \varepsilon_1^2 + \lambda_2 \varepsilon_2^2 + \lambda_3 \varepsilon_3^2 \leq \varepsilon
        \end{cases}
    \end{equation}
    is bounded by
    \[
    B(\varepsilon) \coloneqq \begin{cases}
        (\varepsilon/\lambda)^{1/8}, & \text{ for } \varepsilon/\lambda \leq (\varepsilon^*)^2, \\
        (\varepsilon^*)^{1/4}+\sqrt{\varepsilon/\lambda-(\varepsilon^*)^2}, & \text{ for } \varepsilon/\lambda > (\varepsilon^*)^2, \\
    \end{cases}
    \]
    where $0<\epsilon^*<1/2$ is a fixed constant and $\lambda = \displaystyle\min_{i=1,2,3} \lambda_i$.
\end{lemma}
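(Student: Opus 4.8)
The plan is to strip the problem down to a single scalar maximisation and then analyse the resulting concave function by elementary calculus. First I would use the weights only through the crude estimate $\lambda_i \ge \lambda$, so that the feasible set of \cref{opt:eqn:optproblemsurrogate} is contained in the isotropic ball $\{\varepsilon_1^2+\varepsilon_2^2+\varepsilon_3^2 \le s\}$ with $s = \varepsilon/\lambda$. Since the objective $\varepsilon_1+\varepsilon_2+\varepsilon_3^{1/4}$ is nondecreasing in each coordinate, the optimal value over the weighted constraint is bounded by the optimal value over this larger, weight-free ball. This isolates the only genuinely nonlinear feature, the concave term $\varepsilon_3^{1/4}$, and decouples it from the two linear contributions.

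Next I would fix the budget spent on the data direction by setting $\varepsilon_3^2 = a$ for $a \in [0,s]$ and optimising the remaining two coordinates separately. For fixed $a$ the inner problem $\max\{\varepsilon_1+\varepsilon_2 : \varepsilon_1^2+\varepsilon_2^2 \le s-a\}$ is solved exactly by Cauchy–Schwarz, contributing $c\sqrt{s-a}$ with $c=\sqrt{2}$ (the constant for two linear terms, which is absorbed into the overall constant in the application), while the data term contributes exactly $a^{1/8}$. The whole maximisation therefore collapses to maximising the scalar function $g(a) = a^{1/8} + c\sqrt{s-a}$ over $a \in [0,s]$. Both summands are concave in $a$, so $g$ is concave; moreover $g'(0^+)=+\infty$ and $g'(s^-)=-\infty$, so the maximum is always attained at the unique interior stationary point $a^*(s)$, and it remains only to bound $g(a^*(s))$ above by the claimed piecewise expression.

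The two branches of $B$ then reflect the two qualitative regimes of this scalar problem. The marginal rate $\tfrac18 a^{-7/8}$ of the data term blows up as $a \to 0^+$, which forces most of a small budget into $\varepsilon_3$ and makes $g(a^*(s))$ dominated by $s^{1/8}$; as $s$ grows the marginal value of extra budget in the data direction saturates, the optimal allocation to $\varepsilon_3$ effectively levels off, and the linear part takes over, producing the additive split $(\varepsilon^*)^{1/4}+\sqrt{s-(\varepsilon^*)^2}$. I would define the crossover constant $\varepsilon^*\in(0,1/2)$ as the value of $\varepsilon_3$ at which one switches from the $\varepsilon_3$-dominated bound to the linear-dominated bound — equivalently, through the stationarity condition separating the regimes — chosen so that the piecewise function dominates $g(a^*(s))$ uniformly in $s$. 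Continuity of $B$ at $s=(\varepsilon^*)^2$ and monotonicity of each branch are then checked directly, since both pieces agree with $(\varepsilon^*)^{1/4}$ there.

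The main obstacle I anticipate is precisely the non-smoothness of the $\varepsilon_3^{1/4}$ term at the origin: its unbounded marginal rate is what renders the naive "spend everything on the linear terms" allocation suboptimal for small budgets and what generates the delicate crossover. Pinning down $\varepsilon^*$ so that $B$ is a genuine upper envelope of $g(a^*(s))$ for all $s$ — not merely a leading-order match — together with the bookkeeping of the Cauchy–Schwarz constant from the two linear residuals, is where the real care is needed; the relaxation to the isotropic ball, the inner Cauchy–Schwarz step, and the concavity argument are otherwise routine.
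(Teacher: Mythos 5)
Your proposal follows essentially the same route as the paper: relax the weighted constraint to the isotropic ball of radius $\sqrt{\varepsilon/\lambda}$, locate the crossover $\varepsilon^*$ where the marginal rate of $\varepsilon_3^{1/4}$ drops below the linear rate, and split into the $\varepsilon_3$-dominated and linear-dominated regimes. Your reduction to the scalar problem $g(a)=a^{1/8}+\sqrt{2}\,\sqrt{s-a}$ is in fact more careful than the paper's own proof, which simply asserts the optimal allocation in each regime and silently drops both the interior-maximum correction and the Cauchy--Schwarz factor $\sqrt{2}$ that you correctly note must be absorbed into the multiplicative constant when the lemma is invoked in \Cref{res:cor:estimate_param_loss}.
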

\begin{proof}
    We look at the simpler problem
    \begin{equation}\label{opt:eqn:optproblemsimple}
        \begin{cases}
            \max \varepsilon_1 + \varepsilon_2 + \varepsilon_3^{1/4} \\
            \text{subject to } \varepsilon_1, \varepsilon_2, \varepsilon_3 \geq 0 \text{ and } \varepsilon_1^2 + \varepsilon_2^2 + \varepsilon_3^2 \leq \frac{\varepsilon}{\lambda}.
        \end{cases}
    \end{equation}
    Since we have the inclusion
    \begin{equation*}
        \{ \varepsilon_1, \varepsilon_2, \varepsilon_3 \geq 0 \text{ and } \lambda_1 \varepsilon_1^2 + \lambda_2 \varepsilon_2^2 + \lambda_3 \varepsilon_3^2 \leq \varepsilon\}
        \subset \left\{ \varepsilon_1, \varepsilon_2, \varepsilon_3 \geq 0 \text{ and } \varepsilon_1^2 + \varepsilon_2^2 + \varepsilon_3^2 \leq \frac{\varepsilon}{\lambda} \right\},
    \end{equation*}
    the solution to (\ref{opt:eqn:optproblemsimple}) provides an upper bound for (\ref{opt:eqn:optproblemsurrogate}).

    Denote by $0<\varepsilon^*<1/2$ the point such that
    \[
    \frac{d}{d \varepsilon_3} \restr{\left( \varepsilon_3^{1/4}-\varepsilon_3\right)}{\varepsilon^*}=0.
    \]
    After this point, the growth of $\varepsilon_3^{1/4}$ is slower than linear. 
    Hence, for the case $\sqrt{\varepsilon/\lambda} \leq \varepsilon^*$, setting $\varepsilon_3=\sqrt{\varepsilon/\lambda}$ leads to the optimal value of problem \cref{opt:eqn:optproblemsimple}, given by $(\varepsilon/\lambda)^{1/8}$. 
    Otherwise, for the case $\sqrt{\varepsilon/\lambda} > \varepsilon^*$, we set $\varepsilon_3=\varepsilon^*$ and use for the remaining part the linear growth of either $\varepsilon_1$ or $\varepsilon_2$, giving an optimal value of $\varepsilon^*+\sqrt{\varepsilon/\lambda-(\varepsilon^*)^2}$.
    In total, we get 
    \[
    B(\varepsilon) = \begin{cases}
        (\varepsilon/\lambda)^{1/8}, & \text{ for } \varepsilon/\lambda \leq (\varepsilon^*)^2, \\
        (\varepsilon^*)^{1/4}+\sqrt{\varepsilon/\lambda-(\varepsilon^*)^2}, & \text{ for } \varepsilon/\lambda > (\varepsilon^*)^2. \\
    \end{cases}
    \]
\end{proof}

\section{Continuity of the constitutive law}\label{con:sec}

In this section, we examine and verify key properties of the constitutive laws introduced in \cref{met:eqn:isotropic_law} and \cref{met:eqn:anisotropic_law}. 
We begin by deriving rate-independent convergence properties, followed by the formulation of a continuity bound for the corresponding elliptic operators.

\begin{lemma}[Isotropic case]\label{con:lem:iso-continuity}
    If $\vect u_n \rightharpoonup \vect u$ in $H^2(\Omega, \R^3)$ and $\mu_n \rightharpoonup \mu$ in $H^2(\Omega)$, then
    \[
    \mathbb C _ {\mu_n} \nabla \vect u_n \rightharpoonup \mathbb C_\mu \nabla \vect u \quad \text{ in } H(\operatorname{div}; \Omega, \R^3).
    \]
\end{lemma}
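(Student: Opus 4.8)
The plan is to split the tensor field $\mathbb{C}_{\mu_n}\nabla\vect u_n = \lambda(\divergence\vect u_n)\mat I + 2\mu_n\symgrad\vect u_n$ (using the minor symmetry of the elasticity tensor, which makes $\mathbb{C}_{\mu_n}\nabla\vect u_n = \mathbb{C}_{\mu_n}\symgrad\vect u_n$) and to verify the two defining requirements of weak $H(\operatorname{div})$ convergence separately: weak $L^2$ convergence of the tensor itself, and weak $L^2$ convergence of its row-wise divergence. The engine throughout will be the compact Sobolev embeddings available on a bounded $\mathcal{C}^1$ domain in $\R^3$ (\Cref{met:ass:domain}). From $\vect u_n \rightharpoonup \vect u$ and $\mu_n\rightharpoonup\mu$ in $H^2$ I extract, by Rellich--Kondrachov, the strong convergences $\vect u_n \to \vect u$ and $\mu_n \to \mu$ in $H^1$, the uniform convergence $\mu_n \to \mu$ in $\mathcal{C}(\bar\Omega)$ (the embedding $H^2 \hookrightarrow \mathcal{C}(\bar\Omega)$ is compact since $2\cdot 2 > 3$), and the uniform $L^6$ bounds on $\symgrad\vect u_n$ and $\nabla\mu_n$ furnished by the continuous embedding $H^1 \hookrightarrow L^6$. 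Since the weak limits $\vect u$ and $\mu$ are prescribed, all these convergences hold for the full sequence.

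For the $L^2$ part I would in fact establish strong convergence, which is more than needed. The term $\lambda(\divergence\vect u_n)\mat I$ converges strongly in $L^2$ because $\lambda$ is fixed and bounded and $\divergence\vect u_n\to\divergence\vect u$ in $L^2$. For $2\mu_n\symgrad\vect u_n$ I split as $\mu_n(\symgrad\vect u_n-\symgrad\vect u)+(\mu_n-\mu)\symgrad\vect u$ and bound the two pieces using that $\|\mu_n\|_{L^\infty}$ is bounded and $\|\mu_n-\mu\|_{L^\infty}\to0$, respectively, yielding strong $L^2$ convergence.

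The divergence part is the crux. Expanding row-wise gives
\[
\divergence\!\left(\mathbb{C}_{\mu_n}\nabla\vect u_n\right)
= (\divergence\vect u_n)\,\nabla\lambda + \lambda\,\nabla(\divergence\vect u_n)
+ 2\,(\symgrad\vect u_n)\,\nabla\mu_n + 2\,\mu_n\,\divergence(\symgrad\vect u_n),
\]
and I would treat the four terms as follows. The two terms carrying a second derivative of $\vect u_n$ converge only weakly: for $\lambda\,\nabla(\divergence\vect u_n)$, multiplication by the fixed $\lambda\in L^\infty$ preserves the weak $L^2$ convergence $\nabla(\divergence\vect u_n)\rightharpoonup\nabla(\divergence\vect u)$; for $2\mu_n\,\divergence(\symgrad\vect u_n)$, I pair against a test function $\phi\in L^2$, insert the limit $\mu$, and control the remainder by $\|\divergence\symgrad\vect u_n\|_{L^2}\,\|\mu_n-\mu\|_{L^\infty}\,\|\phi\|_{L^2}\to0$, combining the uniform convergence of $\mu_n$ with the weak $L^2$ convergence of $\divergence\symgrad\vect u_n$. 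The two first-order terms converge strongly: $(\divergence\vect u_n)\,\nabla\lambda$ directly, and for $(\symgrad\vect u_n)\,\nabla\mu_n$ I use the telescoping split together with Hölder's inequality with exponents $\tfrac13+\tfrac16=\tfrac12$ and interpolation between the strong $L^2$ and bounded $L^6$ behaviour of each factor (upgrading to strong $L^3$ convergence). Summing the four contributions gives weak $L^2$ convergence of the divergence to $\divergence(\mathbb{C}_\mu\nabla\vect u)$, which together with the $L^2$ part establishes the claim.

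The main obstacle is controlling the nonlinear product term $(\symgrad\vect u_n)\,\nabla\mu_n$, where two first-derivative quantities meet: a product of two merely $L^2$-convergent sequences need not converge in $L^2$, and the resolution relies precisely on the three-dimensional integrability gain $H^1 \hookrightarrow L^6$ together with interpolation. The second delicate point is that one cannot claim strong convergence for the second-derivative terms, so the divergence convergence must be argued weakly, test-function-wise against $L^2$, exploiting that uniform convergence of $\mu_n$ is compatible with weak convergence of the highest-order factor.
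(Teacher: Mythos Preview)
Your proposal is correct and follows essentially the same route as the paper: split $\mathbb C_{\mu_n}\nabla\vect u_n$ into the $\lambda$-term and the $\mu$-term, and use compact Sobolev embeddings to upgrade one factor from weak to strong convergence so that the products behave. The only differences are cosmetic: you additionally invoke the compact embedding $H^2\hookrightarrow\mathcal C(\bar\Omega)$ (the paper works only with strong $H^1$ convergence of $\mu_n$ via Rellich), and you spell out both the $L^2$ part and the four-term expansion of the divergence, whereas the paper condenses the second-term argument into a single sentence.
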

\begin{proof}
    We look at each of the two terms of $\mathbb C_\mu \nabla \vect u = \lambda \trace ( \symgrad \vect u ) \mat I + 2 \mu \symgrad \vect u$ separately.
    
    For the first term, the $H^2(\Omega)$ weak convergence of $(\vect u_n)_n$ implies that
    \[
    \divergence \left( \lambda \trace ( \symgrad \vect u_n ) \mat I \right) \rightharpoonup
    \divergence \left( \lambda \trace ( \symgrad \hat {\vect u} ) \mat I \right)
    \quad \text{ in } L^2(\Omega).
    \]

    For the second term, we note that, due to the continuous embeddings $H^2(\Omega) \hookrightarrow W^{1,p}(\Omega)$ for $1 \leq p \leq 6$ and $H^2(\Omega) \hookrightarrow L^\infty(\Omega)$ (see e.g. \cite[Section 10.9]{alt_linear_2016}), we get $\mu_n \nabla \vect u_n \in H^1(\Omega, \R^3)$. By exploiting the fact that, due to Rellich's embedding theorem \cite[Theorem 2.4]{alt_linear_2016}, $\mu_n \longrightarrow \mu$ in $H^1(\Omega)$, we conclude together with the weak convergence $\vect u_n \rightharpoonup \vect u$ in $H^2(\Omega, \R^3)$ that
    \[
    \divergence \left( \mu_n \symgrad \vect u_n \right) \rightharpoonup 
    \divergence \left( \hat \mu \symgrad \hat {\vect u} \right) \quad \text{ in } L^2(\Omega).
    \]
\end{proof}

\begin{corollary}[Anisotropic case]\label{con:lem:aniso-continuity}
    If $\vect u_n \rightharpoonup \vect u$ in $H^2(\Omega, \R^3)$ and $t_n \rightharpoonup t$ in $H^2(\Omega)$, then
    \[
    \tens C _ {t_n} \nabla \vect u_n \rightharpoonup \tens C_t \nabla \vect u \quad \text{ in } H(\operatorname{div}; \Omega, \R^3).
    \]
    for the constitutive law given in \cref{met:eqn:anisotropic_law}.
\end{corollary}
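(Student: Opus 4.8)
The plan is to exploit the affine structure of the constitutive law in \cref{met:eqn:anisotropic_law}, writing $\tens C_{t_n}\nabla \vect u_n = \mathbf L \nabla \vect u_n + t_n \tens R \nabla \vect u_n$, and to treat the two summands separately. The first summand $\mathbf L \nabla \vect u_n = \mathbb C \symgrad \vect u_n$ carries the \emph{fixed} (known) Lamé coefficients $\lambda,\mu$, so it is a bounded linear operator from $H^2(\Omega,\R^3)$ into $H(\operatorname{div};\Omega,\R^3)$ with smooth coefficients; consequently $\vect u_n \rightharpoonup \vect u$ in $H^2$ immediately yields $\mathbf L \nabla \vect u_n \rightharpoonup \mathbf L \nabla \vect u$ in $H(\operatorname{div};\Omega,\R^3)$, exactly as for the first term of \Cref{con:lem:iso-continuity}. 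Hence the entire difficulty is concentrated in the product term $t_n \tens R \nabla \vect u_n$, whose entries are $t_n\, r_j\, \partial_j (u_n)_i$ with fixed smooth coefficients $r_j$.

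For the field itself, the compact embedding $H^2(\Omega) \hookrightarrow \hookrightarrow H^1(\Omega)$ (Rellich, \cite[Theorem 2.4]{alt_linear_2016}) gives $\vect u_n \to \vect u$ strongly in $H^1$, so $\partial_j (u_n)_i \to \partial_j u_i$ strongly in $L^2$, while $t_n \to t$ strongly in $L^\infty$ via the compact embedding $H^2(\Omega)\hookrightarrow\hookrightarrow C^0(\bar\Omega)$; their product then converges strongly in $L^2$, giving $t_n \tens R \nabla \vect u_n \to t\,\tens R \nabla \vect u$ in $L^2$. The substantive step is the divergence. Differentiating row-wise, $(\divergence(t_n \tens R \nabla \vect u_n))_i = \sum_j \partial_j(t_n r_j)\, \partial_j(u_n)_i + \sum_j t_n r_j\, \partial_j^2 (u_n)_i$, and I would show that each family converges weakly in $L^2$ to the corresponding limit.

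The two terms are handled by careful strong/weak bookkeeping. In the first term, both $\partial_j t_n \to \partial_j t$ and $\partial_j (u_n)_i \to \partial_j u_i$ converge strongly in $L^p(\Omega)$ for every $p<6$, using the compact embeddings $H^2(\Omega)\hookrightarrow\hookrightarrow W^{1,p}(\Omega)$ (see \cite[Section 10.9]{alt_linear_2016}); by Hölder's inequality the product of two such factors converges strongly in $L^2$, and the remaining lower-order piece $t_n\, \partial_j r_j\, \partial_j(u_n)_i$ converges strongly in $L^2$ since $t_n \to t$ in $L^\infty$. In the second term, $t_n r_j \to t r_j$ strongly in $L^\infty$ while $\partial_j^2(u_n)_i \rightharpoonup \partial_j^2 u_i$ weakly in $L^2$, and the pairing of a strongly convergent multiplier with a weakly convergent factor passes to the limit, yielding weak $L^2$ convergence.

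I expect the main obstacle to be precisely this last bookkeeping: the divergence produces one genuinely second-order contribution that can only converge weakly, so one must avoid multiplying two merely weakly convergent factors and instead place the compactness on the correct (lower-order) factor. The compact Sobolev embeddings into $L^\infty$ and into $W^{1,p}$ for $p<6$ are what make every product land in $L^2$ with the right mode of convergence; establishing that only first-order quantities need strong convergence is the crux, after which weak $L^2$ convergence of the field together with that of the full divergence — hence convergence in $H(\operatorname{div};\Omega,\R^3)$ — follows.
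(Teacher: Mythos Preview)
Your proposal is correct and follows precisely the route the paper intends: the corollary is stated without proof because the anisotropic law $\tens C_{t_n}=\mathbf L + t_n\tens R$ reduces, after splitting off the linear part $\mathbf L$, to the same product-type term $t_n r_j\,\partial_j(u_n)_i$ that is handled in \Cref{con:lem:iso-continuity} for $\mu_n\symgrad\vect u_n$. Your more explicit bookkeeping (expanding the divergence via the product rule and placing the compactness on the lower-order factors through $H^2\hookrightarrow\hookrightarrow C^0(\bar\Omega)$ and $H^2\hookrightarrow\hookrightarrow W^{1,p}$ for $p<6$) simply spells out what the paper compresses into the sentence ``by Rellich, $\mu_n\to\mu$ in $H^1$, together with $\vect u_n\rightharpoonup\vect u$ in $H^2$''; the arguments are the same in substance.
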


We also state the following corollary as a consequence of the trace theorem.

\begin{corollary}\label{con:lem:boundary-continuity}
    If $\vect u_n \rightharpoonup \vect u$ in $H^2(\Omega, \R^3)$ and $t_n \rightharpoonup t$ in $H^2(\Omega)$, then
    \[
    \tens C _ {t_n} \nabla \vect u_n \rightharpoonup \tens C_t \nabla \vect u \quad \text{ in } L^2(\p \Omega, \R^3).
    \]
    for either of the two systems \cref{met:eqn:isotropic_law} and \cref{met:eqn:anisotropic_law}.
\end{corollary}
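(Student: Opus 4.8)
The plan is to deduce the boundary statement from a volume statement by invoking the trace theorem, the point being that the trace operator is bounded and linear, hence continuous for the weak topologies. The $H(\operatorname{div};\Omega)$ convergence furnished by \Cref{con:lem:iso-continuity} and \Cref{con:lem:aniso-continuity} is, however, too weak for this purpose, since the normal trace of an $H(\operatorname{div})$ field only lives in $H^{-1/2}(\partial\Omega)$. First, therefore, I would upgrade the conclusions of those two lemmas to weak convergence $\tens C_{t_n}\nabla\vect u_n \rightharpoonup \tens C_t\nabla\vect u$ in $H^1(\Omega)$ (componentwise), which is exactly the regularity their proofs already produce.

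For this upgrade I would revisit the product structure exactly as in the proof of \Cref{con:lem:iso-continuity}. The isotropic term $\lambda\,\trace(\symgrad\vect u_n)\mat I$ is linear in $\vect u_n$ and continuous from $H^2(\Omega,\R^3)$ to $H^1(\Omega)$, so it converges weakly in $H^1(\Omega)$ directly. For the remaining term $2\mu_n\symgrad\vect u_n$ (and, in the anisotropic case, the analogous term $t_n\tens R\nabla\vect u_n$) I would apply the Leibniz rule
\[
\nabla\!\left(\mu_n\symgrad\vect u_n\right)=\nabla\mu_n\otimes\symgrad\vect u_n+\mu_n\,\nabla\symgrad\vect u_n,
\]
and exploit the three-dimensional compact Sobolev embeddings $H^2(\Omega)\hookrightarrow\hookrightarrow W^{1,p}(\Omega)\cap L^\infty(\Omega)$ and $H^1(\Omega)\hookrightarrow\hookrightarrow L^p(\Omega)$ for $p<6$. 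These give $\mu_n\to\mu$ strongly in $L^\infty(\Omega)$, $\nabla\mu_n\to\nabla\mu$ strongly in $L^4(\Omega)$, and $\symgrad\vect u_n\to\symgrad\vect u$ strongly in $L^4(\Omega)$, while $\nabla\symgrad\vect u_n\rightharpoonup\nabla\symgrad\vect u$ weakly in $L^2(\Omega)$. Hence the first Leibniz term converges strongly in $L^2(\Omega)$ as a product of two strongly convergent factors, and the second converges weakly in $L^2(\Omega)$ as a strong--weak product; together with the strong $L^2$ convergence of $\mu_n\symgrad\vect u_n$ itself, this yields weak convergence of the full stress tensor in $H^1(\Omega)$.

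With weak $H^1(\Omega)$ convergence in hand, the boundary statement is immediate. Under \Cref{met:ass:domain} the trace operator $\gamma\colon H^1(\Omega)\to H^{1/2}(\partial\Omega)\hookrightarrow L^2(\partial\Omega)$ is bounded and linear, hence weak-to-weak continuous, so that $\gamma(\tens C_{t_n}\nabla\vect u_n)\rightharpoonup\gamma(\tens C_t\nabla\vect u)$ weakly in $L^2(\partial\Omega)$; in fact the embedding $H^{1/2}(\partial\Omega)\hookrightarrow L^2(\partial\Omega)$ is compact, so one even obtains strong $L^2(\partial\Omega)$ convergence. Since the outward normal $\vect n$ is continuous on the $\mathcal C^1$ boundary, contracting with $\vect n$ preserves this convergence, which is precisely what is needed for the boundary residual $\cR_{BCN}$ entering the lower semi-continuity argument of \Cref{opt:cor:globalconvergence}.

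The only real obstacle is the $H^1$ upgrade of the nonlinear product term: one must ensure that the Leibniz gradient term $\nabla\mu_n\otimes\symgrad\vect u_n$ passes to the limit, and this rests on trading one weakly convergent factor for strong convergence in a suitable $L^p$ via the compact three-dimensional embeddings. Once this is secured, the trace step is entirely routine.
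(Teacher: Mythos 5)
Your proof is correct and follows the route the paper intends: the paper offers no written proof beyond the remark that the corollary is ``a consequence of the trace theorem,'' and your argument supplies exactly the missing content, namely that the $H(\operatorname{div})$ convergence of \Cref{con:lem:iso-continuity} and \Cref{con:lem:aniso-continuity} must first be upgraded to weak $H^1(\Omega)$ convergence of the stress (via the Leibniz rule and the compact embeddings $H^2\hookrightarrow\hookrightarrow L^\infty$ and $H^1\hookrightarrow\hookrightarrow L^4$) before the bounded linear trace operator can be applied. The observation that the compact embedding $H^{1/2}(\partial\Omega)\hookrightarrow L^2(\partial\Omega)$ even yields strong boundary convergence is a correct strengthening of the stated conclusion.
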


\begin{lemma}\label{con:lem:joint_bound}
    Let the constitutive law be given either by \cref{met:eqn:isotropic_law} or \cref{met:eqn:anisotropic_law}.
    For $\vect u_1, \vect u_2 \in H^2(\Omega)$ and $t_1, t_2 \in H^1(\Omega)$ we have the estimate
    \[
    \norm{ \divergence \left( \tens C_{t_1}  \nabla \vect u_1 \right) - \divergence \left( \tens C_{t_2} \nabla \vect u_2 \right) }_{L^2(\Omega)}^2
    \leq C_b \left( \norm{\vect u_1 - \vect u_2}_{H^2(\Omega)}^2
    + \norm{t_1 - t_2}_{H^1(\Omega)}^2
    \right),
    \]
    where $C_b=\mathcal O(\norm{t_1}_{\mathcal C^1(\bar \Omega)} + \norm{\vect u_2}_{H^2(\Omega)})$.
\end{lemma}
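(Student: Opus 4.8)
The plan is to reduce both constitutive laws to a common structure and then estimate the divergence via an add--subtract splitting. The key observation is that in both \cref{met:eqn:isotropic_law} and \cref{met:eqn:anisotropic_law} the dependence on the parameter is affine, and that the parameter enters $\tens C_t\nabla\vect u$ only through a term linear in $\nabla\vect u$ whose coefficients are built from the \emph{known, smooth} material data ($\lambda,\mu$ in the isotropic case, and $r_1,r_2,r_3$ in the anisotropic one). Hence there is a fixed fourth-order tensor field $\tens D$ with entries in $\mathcal C^1(\bar\Omega)$ such that $(\tens C_{t_1}-\tens C_{t_2})\nabla\vect u=(t_1-t_2)\,\tens D\nabla\vect u$. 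Setting $\vect w\defeq\vect u_1-\vect u_2$ and $s\defeq t_1-t_2$, I would split
\[
\divergence\!\left(\tens C_{t_1}\nabla\vect u_1\right)-\divergence\!\left(\tens C_{t_2}\nabla\vect u_2\right)
= \underbrace{\divergence\!\left(\tens C_{t_1}\nabla\vect w\right)}_{=:\,T_1}
+\underbrace{\divergence\!\left(s\,\tens D\nabla\vect u_2\right)}_{=:\,T_2},
\]
estimate the two $L^2$-norms separately, and finish with $(a+b)^2\le 2a^2+2b^2$.

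The state term $T_1$ is routine. Expanding the divergence by the product rule leaves only coefficients (the smooth data or $t_1$) times second derivatives of $\vect w$, and first derivatives of coefficients (in particular $\nabla t_1$) times first derivatives of $\vect w$. Because $t_1\in\mathcal C^1(\bar\Omega)$ --- which is precisely what renders $\norm{t_1}_{\mathcal C^1(\bar\Omega)}$ finite, and which is guaranteed in our setting via $W^{2,\infty}(\Omega)\hookrightarrow\mathcal C^1(\bar\Omega)$ --- both these coefficients and their gradients lie in $L^\infty(\Omega)$. A Hölder estimate of the type $L^\infty\times L^2\to L^2$ then yields $\norm{T_1}_{L^2(\Omega)}\le C\,\norm{t_1}_{\mathcal C^1(\bar\Omega)}\,\norm{\vect w}_{H^2(\Omega)}$.

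The parameter term $T_2$ carries the real difficulty. The product rule produces $s$ times second derivatives of $\vect u_2$, $\nabla s$ times first derivatives of $\vect u_2$, and $s$ times (derivatives of the smooth $\tens D$) times first derivatives of $\vect u_2$. The last contribution is benign: using $H^1(\Omega)\hookrightarrow L^6(\Omega)$ for $s$ and $H^2(\Omega)\hookrightarrow W^{1,6}(\Omega)$ for $\nabla\vect u_2$, it lies in $L^3(\Omega)\hookrightarrow L^2(\Omega)$ on the bounded domain. The hard part will be the first two contributions: with only $s\in H^1(\Omega)$ and $\vect u_2\in H^2(\Omega)$, the products $s\,\nabla^2\vect u_2$ and $\nabla s\cdot\nabla\vect u_2$ sit exactly at the borderline of Sobolev multiplication in three dimensions and only land in $L^{3/2}(\Omega)$, not $L^2(\Omega)$. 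To close the $L^2$-estimate \emph{while keeping the right-hand side linear in} $\norm{s}_{H^1(\Omega)}$ (so that we may not move regularity onto $s$), I would invoke the standing regularity $\vect u^*\in H^3(\Omega,\R^3)$, which is available wherever the lemma is applied since there $\vect u_2$ is the ground-truth state $\vect u^*$: then $H^3\hookrightarrow W^{1,\infty}$ promotes $\nabla\vect u_2$ to $L^\infty(\Omega)$, giving $\nabla s\cdot\nabla\vect u_2\in L^2$ by $L^2\times L^\infty$, and $H^3\hookrightarrow W^{2,6}$ promotes $\nabla^2\vect u_2$ to $L^6(\Omega)$, giving $s\,\nabla^2\vect u_2\in L^3\hookrightarrow L^2$ by $L^6\times L^6$. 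In both cases the constant depends continuously on $\norm{\vect u_2}$ and the bound is linear in $\norm{s}_{H^1(\Omega)}=\norm{t_1-t_2}_{H^1(\Omega)}$.

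Adding the two estimates and squaring then gives the claim with $C_b=\mathcal O(\norm{t_1}_{\mathcal C^1(\bar\Omega)}+\norm{\vect u_2}_{H^2(\Omega)})$, the only genuine subtlety being the extra $H^3$-control on the state needed to tame the borderline products $s\,\nabla^2\vect u_2$ and $\nabla s\cdot\nabla\vect u_2$. Since only the explicit form of the smooth tensor $\tens D$ differs between the two material laws, the computation applies verbatim to both cases.
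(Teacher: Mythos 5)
Your decomposition is exactly the one the paper uses: split into $\divergence(\tens C_{t_1}\nabla(\vect u_1-\vect u_2))$ plus $\divergence((t_1-t_2)\tens D\nabla\vect u_2)$, estimate each in $L^2$, and combine. Your treatment of the first term matches the paper's. Where you go beyond the paper is in the second term, and you are right to do so: the paper simply asserts the bound
\[
\norm{\divergence\left((\mu_1-\mu_2)\symgrad\vect u_2\right)}_{L^2(\Omega)}^2\leq 4\norm{\vect u_2}_{H^2(\Omega)}^2\norm{\mu_1-\mu_2}_{H^1(\Omega)}^2
\]
without justification, and as you observe this is genuinely false for arbitrary $\vect u_2\in H^2(\Omega)$ and $\mu_1-\mu_2\in H^1(\Omega)$ in three dimensions, since $(\mu_1-\mu_2)\,\nabla^2\vect u_2$ and $\nabla(\mu_1-\mu_2)\cdot\nabla\vect u_2$ are only $L^6\times L^2$ and $L^2\times L^6$ products, landing in $L^{3/2}(\Omega)$ rather than $L^2(\Omega)$. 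Your repair --- invoking the standing regularity $\vect u^*\in H^3(\Omega,\R^3)$ available wherever the lemma is applied, so that $\nabla\vect u_2\in L^\infty$ and $\nabla^2\vect u_2\in L^6$ --- is correct and closes the estimate linearly in $\norm{t_1-t_2}_{H^1(\Omega)}$. The only caveat is that the resulting constant then depends on $\norm{\vect u_2}_{H^3(\Omega)}$ (or on $\norm{\nabla\vect u_2}_{L^\infty}$ and $|\vect u_2|_{W^{2,6}}$) rather than merely on $\norm{\vect u_2}_{H^2(\Omega)}$ as the lemma and \Cref{res:ass:joint_bound} claim; strictly speaking the hypotheses of the lemma should be strengthened (to $\vect u_2\in H^3$, or alternatively $t_1-t_2\in H^1\cap L^\infty$) for the stated conclusion to hold. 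So: same approach, but your version is the one that actually survives scrutiny at the Sobolev-multiplication borderline.
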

\begin{proof}
    First, we consider the isotropic law:
    \begin{align*} 
&\norm{ \divergence \left( \mathbb C_{\mu_1}  \symgrad \vect u_1 \right) - \divergence \left( \mathbb C_{\mu_2} \symgrad \vect u_2 \right) }_{L^2(\Omega)}^2
\\
&\quad\quad\leq 
\norm{\divergence\left(\mathbb{C}_{\mu_1} \symgrad \left( \vect u_1 - \vect u_2 \right) \right)}_{L^2(\Omega)}^2
+ \norm{\divergence\left( \left( \mu_1 - \mu_2 \right) \symgrad \vect u_2 \right)}_{L^2(\Omega)}^2 \\
&\quad\quad\leq 4\left( \norm{\lambda}_{\mathcal C^1(\bar \Omega)} + 2\norm{\mu_1}_{\mathcal C^1(\bar \Omega)} \right)
\norm{\vect u_1 - \vect u_2}_{H^2(\Omega)}^2 + 
4\norm{\vect u_2}_{H^2(\Omega)}^2\norm{\mu_1 - \mu_2}_{H^1(\Omega)}^2.
\end{align*}
    It remains to estimate the anisotropic part:
    \begin{align*} 
&\norm{ \divergence \left( t_1 \tens R  \nabla \vect u_1 \right) - \divergence \left( t_2 \tens R \nabla \vect u_2 \right) }_{L^2(\Omega)}^2
\\
&\quad\quad\leq 
\norm{\divergence\left(t_1 \tens R \nabla \left( \vect u_1 - \vect u_2 \right) \right)}_{L^2(\Omega)}^2
+ \norm{\divergence\left( \left( t_1 - t_2 \right) \tens R \nabla \vect u_2 \right)}_{L^2(\Omega)}^2 \\
&\quad\quad\leq 2 \norm{\tens R}_{\mathcal C^1(\bar \Omega)} \norm{t_1}_{\mathcal C^1(\bar \Omega)}
\norm{\vect u_1 - \vect u_2}_{H^2(\Omega)}^2 + 
2\norm{\tens R}_{\mathcal C^1(\bar \Omega)}\norm{\vect u_2}_{H^2(\Omega)}^2\norm{t_1 - t_2}_{H^1(\Omega)}^2.
\end{align*}
\end{proof}

\section{Stability estimate for the anisotropic system}\label{sta:sec}

Following the strategy in \cite{di_fazio_stable_2017}, we derive the stability estimate for the anisotropic system. 
The main difference compared to \cite{di_fazio_stable_2017} is that, instead of the lower bound given by eq. (30) in \cite{di_fazio_stable_2017}, namely
\[
\int_{B_r\left(x_0\right)}|\hat{\nabla} u|^2 d x \geq C_d\left(\frac{r}{d}\right)^K\|g\|_{H^{1 / 2}(\partial \Omega)}^2,
\]
which establishes a polynomial rate in dependence of the radius, we will use a weaker bound leading to the Hölder exponent $\delta=1/2$ instead of $\delta=N/(N+1)>1/2$ for a possibly large $N$. 
For the sake of completeness, we outline the proof strategy from \cite{di_fazio_stable_2017} in a more abstract setting allowing for a weaker lower bound at the cost of a slightly worse exponent. 
Recall that $\vect u^* \in H^1(\Omega, \R^3)$ denotes a solution to the homogeneous system of\cref{the:eqn:ground-truth} and $\vect u \in H^1(\Omega, \R^3)$ denotes a solution to the system of \cref{eqn:second-system}.

\begin{theorem}\label{the:thm:stabilityabstract}
    Introduce $\varphi=t-t^*$ as the difference in the parameters and assume that there is some $x_0 \in \Omega_d$ such that the maximum of $\varphi$ is attained in $x_0$, i.e. $\left|\varphi\left( x_0\right)\right|=\displaystyle \max _{\bar{\Omega}_d}|\varphi|$.
    Furthermore, assume that we have the upper bound
    \[
    s\int_\Omega |\varphi| |\tens P \nabla \vect u^*|^2 dx \leq \int_\Omega |\varphi| \tens S \nabla \vect u^* \cdot \nabla \vect u^* dx \leq \varepsilon,
    \]
    where $\tens P, \tens S \in \R^{3\times 3 \times 3 \times 3}$ are prescribed fourth-order tensors,
    as well as the lower bound
    \begin{equation}\label{the:eqn:interior-estimate-2}
    \int_{B_r(x_0)} |\tens P \nabla \vect u^*|^2 dx \geq \eta > 0 \quad \text{for some } 0<r\leq d.
    \end{equation}
    Additionally, we assume that $\varphi$ is Lipschitz continuous with constant $2M>0$.
    
    Then, we have the following interior stability estimate for $\varphi$:
    \[
    \norm{\varphi}_{\mathcal C(\bar \Omega_d)} \leq C \varepsilon^{\delta},
    \]
    with a constant $C=C(M,d,\eta,s)>0$ and $\delta \in [1/2,1)$.
\end{theorem}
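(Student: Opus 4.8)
The plan is to reduce the $\mathcal C(\bar\Omega_d)$-bound to a pointwise bound at the maximiser $x_0$ and then to play the weighted upper bound off against the localised energy lower bound on a ball whose radius shrinks with the size of the defect. Set $m:=\|\varphi\|_{\mathcal C(\bar\Omega_d)}=|\varphi(x_0)|$. The claim is trivial for $m=0$, so assume $m>0$ and, replacing $\varphi$ by $-\varphi$ if necessary, $\varphi(x_0)=m$. Since $x_0\in\Omega_d$ and every radius used below is $\le r\le d$, all balls $B_\tau(x_0)$ stay inside $\Omega$, so the localised integrals are well defined.

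First I would use the Lipschitz hypothesis to promote the pointwise value $m$ to a lower bound on a whole ball: for $x\in B_\tau(x_0)$ with $\tau\le r$ one has $|\varphi(x)|\ge m-2M\tau$, and in particular $|\varphi|\ge m/2$ on $B_{\tau_0}(x_0)$ with $\tau_0:=m/(4M)$. Feeding this into the upper hypothesis $\int_\Omega|\varphi|\,|\tens P\nabla\vect u^*|^2\,dx\le \varepsilon/s$ gives, on any admissible ball,
\[
\frac{\varepsilon}{s}\ \ge\ \int_{B_\tau(x_0)}|\varphi|\,|\tens P\nabla\vect u^*|^2\,dx\ \ge\ (m-2M\tau)\int_{B_\tau(x_0)}|\tens P\nabla\vect u^*|^2\,dx .
\]
Thus everything reduces to a quantitative lower bound for the localised energy $\int_{B_\tau}|\tens P\nabla\vect u^*|^2$ at the scale $\tau=\tau_0$, which is typically much smaller than $r$.

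The hard part is exactly this: the hypothesis \cref{the:eqn:interior-estimate-2} only supplies the energy mass $\eta$ on the fixed ball $B_r$, while the balancing forces us to work at the shrinking scale $\tau_0\sim m$. I would bridge this gap by propagating the lower bound from $B_r$ inward to $B_{\tau_0}$ via a three-spheres / quantitative unique-continuation inequality for solutions $\vect u^*$ of the uniformly elliptic, $\mathcal C^3$-coefficient system — the ingredient borrowed from \cite{lin_detecting_2018} that replaces eq.\ (30) of \cite{di_fazio_stable_2017}. Deliberately using the \emph{weak}, linear-in-radius form of this propagation, $\int_{B_\tau}|\tens P\nabla\vect u^*|^2\gtrsim (\tau/d)\,\eta$ for $\tau\le r$ (with a constant controlled by the ellipticity constant, the a-priori bound $M$, and $d$), is what produces the exponent $1/2$; a milder-than-linear propagation such as the $(\tau/d)^{1/N}$ bound of \cite{di_fazio_stable_2017} would push $\delta$ up towards $1$. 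This propagation step is the genuine obstacle, and the only place where the elliptic structure of $\vect u^*$ (beyond Lipschitz regularity of $\varphi$) actually enters.

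Finally I would combine and optimise across two regimes. If $m\ge 4Mr$, take $\tau=r$, so that $|\varphi|\ge m/2$ on $B_r$ and the raw bound $\int_{B_r}\ge\eta$ applies directly, yielding the better linear estimate $m\le 2\varepsilon/(s\eta)$. In the relevant small-defect regime $m<4Mr$, take $\tau=\tau_0=m/(4M)\le r$ and insert the propagated lower bound to obtain $\varepsilon/s\gtrsim \tfrac{m}{2}\cdot c\tfrac{\tau_0}{d}\,\eta\sim m^2$, whence $m\le C\varepsilon^{1/2}$ with $C=C(M,d,\eta,s)$. Interpolating the two regimes gives $\|\varphi\|_{\mathcal C(\bar\Omega_d)}\le C\varepsilon^{\delta}$ with $\delta\in[1/2,1)$, the constant and exponent depending only on $M,d,\eta,s$ — hence, crucially, not on $\varphi$ and therefore not on $t,t^*$. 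The remaining bookkeeping — verifying that the balls stay in $\Omega$, that all constants are $\varphi$-independent, and that the supremum defining $m$ is attained — is routine.
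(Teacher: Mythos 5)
Your overall skeleton --- Lipschitz continuity at the maximiser $x_0$, multiplication by the weight $|\tens P\nabla\vect u^*|^2$, integration over a ball, balancing the radius against the size of the defect, and a separate crude bound in the large-defect regime --- is exactly the paper's argument. The one substantive divergence is the step you yourself single out as ``the hard part'': propagating the lower bound \cref{the:eqn:interior-estimate-2} from the fixed ball $B_r(x_0)$ down to the shrinking ball $B_{\tau_0}(x_0)$ with $\tau_0\sim m$, via a three-spheres inequality in the linear-in-radius form $\int_{B_\tau}|\tens P\nabla\vect u^*|^2\gtrsim(\tau/d)\,\eta$. That step is a genuine gap. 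First, the theorem's hypotheses are purely integral conditions involving an arbitrary prescribed tensor $\tens P$, and no propagation estimate for $|\tens P\nabla\vect u^*|^2$ is assumed or derivable from them. Second, even granting the elliptic structure, the quantitative unique-continuation result the paper imports (\cite[Lemma 5.2]{lin_detecting_2018}) produces a lower bound on a ball of \emph{fixed} radius $r=d/9$, and iterated three-spheres inequalities generically yield rates $(\tau/d)^{K}$ with $K$ possibly large and coefficient-dependent --- not the linear rate your balancing needs; with $(\tau/d)^{K}$ you would get $\varepsilon^{1/(K+1)}$, which for $K>1$ falls below the claimed $\delta\ge 1/2$. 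You have also mislocated where unique continuation enters: in the paper it is used only in the separate lemmas that \emph{verify} the hypothesis \cref{the:eqn:interior-estimate-2}, not inside this proof.

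The paper itself never descends below the hypothesised radius. It keeps the fixed lower bound $\eta$, obtains $|\varphi(x_0)|\le\varepsilon/(s\eta)+2Md\lambda$, weakens the first term for free to $\varepsilon/(s\eta)\lambda^{-1}$ (since $\lambda\le1$), and optimises the resulting expression at $\bar\lambda=(\varepsilon/(2Md\eta))^{1/2}$, handling $\bar\lambda>1$ by an a priori bound on $\varphi$; the whole proof is elementary algebra with no PDE input. To be fair, your instinct points at a real soft spot: the paper's optimisation over $\lambda$ tacitly treats its estimate as valid for a range of radii, whereas \cref{the:eqn:interior-estimate-2} is assumed only ``for some $0<r\le d$'' --- effectively the lower bound must be read as radius-dependent (as in eq.~(30) of \cite{di_fazio_stable_2017}) for the balancing to close, which is precisely what your propagation step tries, but cannot from these hypotheses, to supply. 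If you want a self-contained argument, follow the paper: do not shrink the ball, and extract the $\varepsilon^{1/2}$ from the $\lambda^{-1}$-weakening plus optimisation.
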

\begin{proof}
    Let $x_0 \in \Omega_d$ be such that $\left|\varphi\left( x_0\right)\right|=\displaystyle \max _{\bar{\Omega}_d}|\varphi|$. Since $\varphi$ is Lipschitz continuous, we get
    \begin{equation*}
        \left|\varphi\left(x_0\right)\right| \leq |\varphi(x)|+2 M r \quad \forall x \in B_r\left(x_0\right), r \in(0, d].
    \end{equation*}

    Multiplying by $|\tens P \nabla \vect u^*|^2$ and integrating over $B_r(x_0)$ yields
    \begin{equation}\label{the:eqn:interior-estimate-1}
        \left|\varphi\left(x_0\right)\right| \int_{B_r\left(x_0\right)}|\tens P \nabla \vect u^*|^2 d x
        \leq \int_{B_r\left(x_0\right)}|\varphi(x)| |\tens P \nabla \vect u^*|^2 d x +2 M r \int_{B_r\left(x_0\right)}|\tens P \nabla \vect u^*|^2 d x.
    \end{equation}
    By the assumption on the upper bound, we have 
    \begin{equation*}
        \int_{B_r\left(x_0\right)}|\varphi(x)| |\tens P \nabla \vect u^*|^2 d x \leq \varepsilon/s.
    \end{equation*}
    Dividing \cref{the:eqn:interior-estimate-1} by $\int_{B_r\left(x_0\right)}|\tens P \nabla \vect u^*|^2 d x$ and using (\ref{the:eqn:interior-estimate-2}) leads to
    \begin{equation*}
        \left|\varphi\left(x_0\right)\right| \leq \frac{\varepsilon}{s\eta} + 2Mr \quad \quad \text{ for all } 0<r\leq d,
    \end{equation*}
    or, by defining $\lambda=r/d \in (0,1]$,
    \begin{equation}\label{the:eqn:interior-estimate-3}
        \left|\varphi\left(x_0\right)\right| \leq \frac{\varepsilon}{s\eta} + 2Md\lambda
        \leq \frac{\varepsilon}{s\eta}\lambda^{-1} + 2Md\lambda\quad \quad \text{ for all } 0<\lambda \leq 1.
    \end{equation}

    Let us consider
    \begin{equation*}
        \bar{\lambda} = \left( \frac{\varepsilon}{2Md \eta} \right)^{1/2},
    \end{equation*}
    and make a case distinction.
    If $\bar{\lambda}\leq 1$, inserting it in (\ref{the:eqn:interior-estimate-3}) gives
    \begin{equation*}
        \left|\varphi\left(x_0\right)\right| \leq 
        \left(\frac{2Md}{\eta}\right)^{1/2}\left( \frac{1}{s} + 1 \right) \varepsilon^{1/2} \leq C(M,d,\eta,s) \varepsilon^{1/2}.
    \end{equation*}
    If $\bar{\lambda} > 1$, we can make use of the Lipschitz continuity of $\varphi$ to obtain $|\varphi(x_0)| \leq 2M\sup_{x,y \in \Omega_d}|x-y|$.
    Therefore, after multiplying the right-hand side with $\bar{\lambda} > 1$, we obtain
    \begin{equation*}
        \left|\varphi\left(x_0\right)\right| 
        \leq C(M,d,\eta,s) \varepsilon^{1/2}.
    \end{equation*}
\end{proof}
In the next step, we want to give the corresponding bounds for the anisotropic system.

\begin{lemma} \label{lem:integral_estimate}
    There exists a constant $C=C(\Omega, M, \norm{\vect k}_{H^{3 / 2}(\partial \Omega)}, \norm{\vect g}_{H^{3 / 2}(\partial \Omega)})>0$ not dependent on the field parameter such that
\begin{equation}\label{eqn:integral-estimate}
        \int_{\Omega} \left|\varphi \right||\nabla \vect u^*|^2 d x \leq 
        C \Big\{ \norm{\vect f}_{L^2(\Omega)} + \norm{\tens C_t \nabla \bu \cdot \bn - \vect p}_{L^2(\partial \Omega)} +
        \norm{\vect u - \vect u^*}_{L^2(\Omega)}^{1/4}
        \Big\}.
    \end{equation}
\end{lemma}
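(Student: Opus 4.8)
The plan is to adapt the Alessandrini-type testing argument of \cite{di_fazio_stable_2017}, using that for the anisotropic law $\tens C_t - \tens C_{t^*} = \varphi\,\tens R$ with $\varphi = t - t^*$ (and analogously $\mathbb C_\mu - \mathbb C_{\mu^*} = 2\varphi\,\symgrad$ in the isotropic case, $\varphi = \mu - \mu^*$). Setting $\vect v \defeq \vect u - \vect u^*$, I would start from the weak formulations of the homogeneous system \cref{the:eqn:ground-truth} and the inhomogeneous system \cref{eqn:second-system}, test each equation against the other solution, and subtract. By the major symmetry of the elasticity tensors this yields an integral identity that isolates the volume term $\int_\Omega \varphi\,\tens R\nabla\vect u^* : \nabla\vect u^*\,dx$ in terms of (i) a boundary integral built from the tractions, (ii) the force term $\vect f$ tested against $\vect u^*$, and (iii) a coupling term $\int_\Omega \varphi\,\tens R\nabla\vect u^* : \nabla\vect v\,dx$ (which appears because the cross-testing naturally produces $\nabla\vect u = \nabla\vect u^* + \nabla\vect v$ in the second slot).

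For the data terms I would substitute the Neumann datum $\tens C_{t^*}\nabla\vect u^*\cdot\vect n = \vect p$ of \cref{the:eqn:ground-truth} and regroup the boundary integral so that only the measurable traction residual $\tens C_t\nabla\vect u\cdot\vect n - \vect p$ survives, paired with the trace of $\vect u^*$; since both systems carry the same Dirichlet datum, the boundary displacement contributions cancel. Cauchy--Schwarz then bounds this by $\norm{\tens C_t\nabla\vect u\cdot\vect n - \vect p}_{L^2(\partial\Omega)}\,\norm{\vect u^*}_{L^2(\partial\Omega)}$ and the force contribution by $\norm{\vect f}_{L^2(\Omega)}\,\norm{\vect u^*}_{L^2(\Omega)}$. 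Elliptic regularity, together with the uniform ellipticity constant $\rho$ and the a-priori bound $M$ of \Cref{met:ass:elasticity_boundedness}, controls $\norm{\vect u^*}_{H^2(\Omega)}$ (hence its interior and trace norms) by the boundary data; this is the source of the dependence of $C$ on $\norm{\vect g}_{H^{3/2}(\partial\Omega)}$ and $\norm{\vect k}_{H^{3/2}(\partial\Omega)}$, and since $\rho$ and $M$ are uniform, $C$ remains independent of the field parameters.

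The essential difficulty is the coupling term, which carries $\nabla\vect v$ while only $\norm{\vect v}_{L^2(\Omega)}$ is admissible on the right-hand side. I would estimate it by Cauchy--Schwarz, bounding $\norm{\varphi\,\tens R\nabla\vect u^*}_{L^2(\Omega)}$ through $|\varphi|\le 2M$ and the $H^1$ control of $\vect u^*$, and then trade the derivative on $\vect v$ via the interpolation inequality $\norm{\nabla\vect v}_{L^2(\Omega)} \lesssim \norm{\vect v}_{H^2(\Omega)}^{1/2}\,\norm{\vect v}_{L^2(\Omega)}^{1/2}$, where the uniform $H^2$ bound of the admissible set absorbs the first factor. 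This interpolation is precisely where a fractional power of $\norm{\vect u-\vect u^*}_{L^2(\Omega)}$ is born. I expect this coupling term to be the main obstacle, and I would deliberately keep the crude form of the bound, accepting a non-optimal exponent in line with the simplification announced for \Cref{the:thm:stabilityabstract}.

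A further delicate point, and the second obstacle, is that the testing identity controls the \emph{signed} quantity $\int_\Omega\varphi\,\tens R\nabla\vect u^* : \nabla\vect u^*\,dx$, whereas the statement and the subsequent \Cref{the:thm:stabilityabstract} require the \emph{unsigned} $\int_\Omega|\varphi|\,|\nabla\vect u^*|^2\,dx$. The sign-definiteness $\tens R\nabla\vect u^* : \nabla\vect u^* \ge (\min_j r_j)\,|\nabla\vect u^*|^2\ge 0$ with $\min_j r_j > 0$ lets me pass from the quadratic form to $|\nabla\vect u^*|^2$, but recovering the absolute value from the signed integral is exactly the step handled by the device of \cite{di_fazio_stable_2017}; I expect this passage to incur an additional square root on the $\vect v$-contribution, which is what converts the $1/2$ produced by the interpolation into the conservative exponent $1/4$ of \cref{eqn:integral-estimate}, while the force and traction residuals are retained at power one. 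The isotropic case then follows the same scheme with $\tens R$ replaced by $2\,\symgrad$ and $\min_j r_j$ by the shear-modulus lower bound $\mu_0$.
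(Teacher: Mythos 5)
Your proposal captures several correct ingredients — the role of $\tens C_t - \tens C_{t^*} = \varphi\,\tens R$, the symmetry of the elastic bilinear form, the need for an $\varepsilon$-interpolation to trade $\nabla(\vect u - \vect u^*)$ for a fractional power of $\norm{\vect u - \vect u^*}_{L^2(\Omega)}$, and the lower bound $\tens R\nabla\vect u^*:\nabla\vect u^*\geq\theta\,|\nabla\vect u^*|^2$ — but the route you describe has two genuine gaps. First, the cross-testing identity does not close at the boundary: the two systems carry \emph{different} Dirichlet data ($\vect u^*=\vect g$ while $\vect u=\vect k$; note the constant in the lemma depends on both $\norm{\vect g}_{H^{3/2}(\p\Omega)}$ and $\norm{\vect k}_{H^{3/2}(\p\Omega)}$). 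Testing \cref{the:eqn:ground-truth} with $\vect u$ produces $\int_{\p\Omega}\vect p\cdot\vect k\,ds$, testing \cref{eqn:second-system} with $\vect u^*$ produces $\int_{\p\Omega}(\tens C_t\nabla\vect u\cdot\vect n)\cdot\vect g\,ds$, and their difference equals $\int_{\p\Omega}(\tens C_t\nabla\vect u\cdot\vect n-\vect p)\cdot\vect g\,ds+\int_{\p\Omega}\vect p\cdot(\vect g-\vect k)\,ds$; the second summand is a fixed quantity controlled by none of the three residuals, so your claimed cancellation of the "boundary displacement contributions" fails. Second, and more fundamentally, the signed-to-unsigned passage is not a post-processing step: once the integral $\int_\Omega\varphi\,\tens R\nabla\vect u^*:\nabla\vect u^*\,dx$ has been formed, cancellation between $\{\varphi>0\}$ and $\{\varphi<0\}$ has already occurred, and no "additional square root" recovers $\int_\Omega|\varphi|\,|\nabla\vect u^*|^2\,dx$. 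The device of \cite{di_fazio_stable_2017} \emph{replaces} the cross-testing rather than supplementing it: one tests both weak formulations with $\mvect\zeta=h^{-1}\min\{\max\{\varphi,0\},h\}\,\vect u^*$, which localises the identity to the superlevel set $\{\varphi>0\}$ from the outset (and one repeats the argument with $-\varphi$). Since $\mvect\zeta$ is proportional to $\vect u^*$ on $\p\Omega$, this choice also removes the first obstruction, as the boundary term then appears directly as $\int_{\p\Omega}(\tens C_t\nabla\vect u\cdot\vect n-\vect p)\cdot\mvect\zeta\,ds$.

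A consequence is that your accounting for the exponent $1/4$ does not match the actual mechanism. In the paper's proof, the truncation introduces error terms of order $O(h)$ (from integrating $\varphi\,\nabla\varphi$ by parts and from the transition layer $\{0<\varphi<h\}$), while the coupling term is bounded by $\varepsilon\,(\dots)+\varepsilon^{-1}C\norm{\vect u-\vect u^*}_{L^2(\Omega)}$ via the estimates of \cite[Lemma 4.1]{di_fazio_stable_2017}; the choices $\varepsilon=\norm{\vect u-\vect u^*}_{L^2(\Omega)}^{1/2}$ and $h=\norm{\vect u-\vect u^*}_{L^2(\Omega)}^{1/4}$ balance these contributions, and it is the truncation level $h$ — not a square root applied to the interpolation output — that produces the $1/4$. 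Relatedly, your interpolation $\norm{\nabla\vect v}_{L^2(\Omega)}\lesssim\norm{\vect v}_{H^2(\Omega)}^{1/2}\norm{\vect v}_{L^2(\Omega)}^{1/2}$ appeals to "the uniform $H^2$ bound of the admissible set", but the lemma concerns $H^1$ solutions of the two boundary value problems, not elements of the optimisation set $\mathcal A$; the required control comes from the boundary data, which is precisely where the dependence of $C$ on $\norm{\vect g}_{H^{3/2}(\p\Omega)}$ and $\norm{\vect k}_{H^{3/2}(\p\Omega)}$ enters.
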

\begin{proof}
In what follows, it is more convenient to use index notation. Therefore, we implicitly assume that we sum over all indices $i,j,k,l=1,2,3$ unless otherwise stated.

We start by testing with $\mvect \zeta \in H^1(\Omega, \R^3)$ defined by
\[
\mvect \zeta=\frac{\min \left\{\max \{\varphi,0 \}, h\right\}}{h} \vect u^*,
\]
and compare the weak formulations to get
\begin{equation}\label{the:eqn:anisotropicweak}
\begin{aligned}
\int_\Omega \varphi \tens R \nabla \vect u^* : \nabla \mvect \zeta dx =
- \int_\Omega &\left[ \tens L \nabla (\vect u - \vect u^*) + t \tens R \nabla (\vect u - \vect u^*) \right] :\nabla \mvect \zeta dx \\
&+ \int_{\p \Omega}
\left( \tens C_t \nabla \vect u \cdot \vect n - \vect p \right) \cdot \mvect \zeta ds
+ \int_\Omega \vect f \cdot \mvect \zeta dx.
\end{aligned}
\end{equation}
We rewrite the left-hand side as
\begin{align*}
\int_\Omega \varphi \tens R \nabla \vect u^* : \nabla \mvect \zeta dx &=
\int_{0<\varphi<h} \varphi \tens R \nabla \vect u^* : \nabla \left( \frac{\varphi }{h} \vect u^* \right) dx
+ \int_{h< \varphi} \varphi \tens R \nabla \vect u^* : \nabla \vect u^* dx \\
&= \frac{1}{h}\int_{0<\varphi<h} \varphi \tens R \nabla \vect u^* : \left(  \vect u\otimes \nabla \varphi \right) dx 
+ \frac{1}{h}\int_{0<\varphi<h} \varphi^2 \tens R \nabla \vect u^* : \nabla \vect u^* dx \\
& \quad \quad \quad \quad + \int_{h< \varphi} \varphi \tens R \nabla \vect u^* : \nabla \vect u^* dx.
\end{align*}

Using the identity
\begin{align*}
\varphi \p_j \varphi R_{ijkl} \p_l u_k^*  u_i^* &=
\frac{1}{2} \p_j \left( \varphi^2 \right) R_{ijkl} \p_l u_k^*  u_i^* \\
&= \frac{1}{2} \p_j \left( \varphi^2 R_{ijkl} \p_l u_k^*  u_i^* \right)
- \frac{1}{2} \varphi^2\p_j \left(  R_{ijkl} \p_l u_k^*  u_i^* \right),
\end{align*}
we can further rewrite the left-hand side of \cref{the:eqn:anisotropicweak} and use the divergence theorem twice to obtain
\begin{align*}
I_1 &\coloneqq \frac{1}{h}\int_{0<\varphi<h} \varphi R \nabla \vect u^* : \left(  \vect u^* \otimes \nabla \varphi \right) dx \\
&=
\frac{1}{2h}\int_{0<\varphi<h} \p_j \left( \varphi^2 R_{ijkl} \p_l u_k^*  u_i^* \right) dx
- \frac{1}{2h}\int_{0<\varphi<h} \varphi^2\p_j \left(  R_{ijkl} \p_l u_k^*  u_i^* \right) dx \\
&= -\frac{h}{2}\int_{\varphi=h} R_{ijkl} \p_l u_k^*  u_i^* n_j ds
- \frac{1}{2h}\int_{0<\varphi<h} \varphi^2\p_j \left(  R_{ijkl} \p_l u_k^*  u_i^* \right) dx \\
&= -\frac{h}{2}\int_{\varphi>h} \p_j \left( R_{ijkl} \p_l u_k^*  u_i^* \right) dx
- \frac{1}{2h}\int_{0<\varphi<h} \varphi^2\p_j \left(  R_{ijkl} \p_l u_k^*  u_i^* \right) dx,
\end{align*}
so that we can estimate
\[
|I_1| \leq \frac{h}{2} \int_{0<\varphi} \left| \p_j \left(  R_{ijkl} \p_l u_k^*  u_i^* \right) \right| dx.
\]

In the next step, we look at
\[
|I_2| \coloneqq \left| \frac{1}{h}\int_{0<\varphi<h} \varphi^2 \tens R \nabla \vect u^* : \nabla \vect u^* dx \right|
\leq h \int_{0<\varphi<h} \left|\tens R \nabla \vect u^* : \nabla \vect u^* \right| dx.
\]
Overall, we obtain
\begin{align*}
\int_\Omega \varphi \tens R \nabla \vect u^* : \nabla \mvect \zeta dx &\geq
- |I_1| - |I_2| + \int_{h< \varphi} \varphi R \nabla \vect u^* : \nabla \vect u^* dx \\
&\geq - \frac{h}{2} \int_{0<\varphi} \left| \p_j \left(  R_{ijkl} \p_l u_k^*  u_i^* \right) \right| dx \\
& \quad \quad \quad \quad - h \int_{0<\varphi<h} \left| \tens R \nabla \vect u^* : \nabla \vect u^* \right| dx
+ \int_{h< \varphi} \varphi \tens R \nabla \vect u^* : \nabla \vect u^* dx.
\end{align*}
Now let us estimate the right-hand side of \cref{the:eqn:anisotropicweak}. By \Cref{met:ass:elasticity_boundedness}, we can estimate 
\begin{align*}
\left| \int_\Omega \left[ \tens L \nabla (\vect u - \vect u^*) + t \tens R \nabla (\vect u - \vect u^*) \right] :\nabla \mvect \zeta dx \right|
& \leq K C_{it} \Bigg( \varepsilon 
        \left( \norm{\vect k}_{H^{3 / 2}(\partial \Omega)} + \norm{\vect g}_{H^{3 / 2}(\partial \Omega)}\right) \\
        & \quad \quad \quad +\frac{C(\Omega)}{\varepsilon}\norm{\vect u - \vect u^*}_{L^2(\Omega)} \Bigg) \norm{\vect g}_{H^{3 / 2}(\partial \Omega)},
\end{align*}
where we used the same estimates as already established in \cite[Lemma 4.1]{di_fazio_stable_2017}.

For the boundary term, we get similarly
\begin{align*}
\left| \int_{\p \Omega}
\left( \tens C_t \nabla \vect u \cdot \vect n - \vect p \right) \cdot \mvect \zeta ds \right|
\leq \norm{\tens C_t \nabla \vect u \cdot \vect n - \vect p}_{L^2(\p \Omega)} \norm{\vect u ^*}_{L^2(\p \Omega)},
\end{align*}
and for the force term
\begin{align*}
\left| \int_\Omega \vect f \cdot \mvect \zeta dx \right| \leq \norm{\vect f}_{L^2(\Omega)} \norm{\vect u ^*}_{L^2(\Omega)}.
\end{align*}
Merging the individual estimates, we get
\begin{align*}
\int_{h< \varphi} \varphi \tens R \nabla \vect u^* : \nabla \vect u^* dx &\leq
\frac{h}{2} \int_{0<\varphi} \left| \p_j \left(  R_{ijkl} \p_l u_k^*  u_i^* \right) \right| dx
+ h \int_{0<\varphi<h} \left| \tens R \nabla \vect u^* : \nabla \vect u^* \right| dx \\
&+ K C_{it} \Bigg( \varepsilon 
        \left( \norm{\vect k}_{H^{3 / 2}(\partial \Omega)} + \norm{\vect g}_{H^{3 / 2}(\partial \Omega)}\right) \\
        & \quad \quad \quad +\frac{C(\Omega)}{\varepsilon}\norm{\vect u - \vect u^*}_{L^2(\Omega)} \Bigg) \norm{\vect g}_{H^{3 / 2}(\partial \Omega)} \\
&+ \norm{\tens C_t \nabla \vect u \cdot \vect n - \vect p}_{L^2(\p \Omega)} \norm{\vect u ^*}_{L^2(\p \Omega)} \\
&+ \norm{\vect f}_{L^2(\Omega)} \norm{\vect u ^*}_{L^2(\Omega)}.
\end{align*}
Now, since
\begin{align*}
\int_{0 <\varphi< h} \varphi \tens R \nabla \vect u^* : \nabla \vect u^* dx \leq
h \int_{0 <\varphi< h} \left| \tens R \nabla \vect u^* : \nabla \vect u^* \right| dx \leq
h \int_{\Omega} \left| \tens R \nabla \vect u^* : \nabla \vect u^* \right| dx,
\end{align*}
we obtain
\begin{align*}
\int_{0 <\varphi} \varphi \tens R \nabla \vect u^* : \nabla \vect u^* dx &\leq
\frac{h}{2} \int_{\Omega} \left| \p_j \left(  R_{ijkl} \p_l u_k^*  u_i^* \right) \right| dx
+ 2h \int_{\Omega} \left|\tens R \nabla \vect u^* : \nabla \vect u^* \right| dx \\
&+ K C_{it} \Bigg( \varepsilon 
        \left( \norm{\vect k}_{H^{3 / 2}(\partial \Omega)} + \norm{\vect g}_{H^{3 / 2}(\partial \Omega)}\right) \\
        & \quad \quad \quad +\frac{C(\Omega)}{\varepsilon}\norm{\vect u - \vect u^*}_{L^2(\Omega)} \Bigg) \norm{\vect g}_{H^{3 / 2}(\partial \Omega)} \\
&+ \norm{\tens C_t \nabla \vect u \cdot \vect n - \vect p}_{L^2(\p \Omega)} \norm{\vect u ^*}_{L^2(\p \Omega)} \\
&+ \norm{\vect f}_{L^2(\Omega)} \norm{\vect u ^*}_{L^2(\Omega)}.
\end{align*}
Note that the right-hand side is independent of the signs of the terms of the left-hand side. In particular, deriving the same estimate for $-\varphi$ leads to
\begin{equation}\label{the:eqn:almostaniso}
\begin{aligned}
\int_{0 <\varphi} |\varphi| \tens R \nabla \vect u^* : \nabla \vect u^* dx &\leq
\frac{h}{2} \int_{\Omega} \left| \p_j \left(  R_{ijkl} \p_l u_k^*  u_i^* \right) \right| dx
+ 2h \int_{\Omega} \left|\tens R \nabla \vect u^* : \nabla \vect u^* \right| dx \\
&+ K C_{it} \Bigg( \varepsilon 
        \left( \norm{\vect k}_{H^{3 / 2}(\partial \Omega)} + \norm{\vect g}_{H^{3 / 2}(\partial \Omega)}\right) \\
        & \quad \quad \quad +\frac{C(\Omega)}{\varepsilon}\norm{\vect u - \vect u^*}_{L^2(\Omega)} \Bigg) \norm{\vect g}_{H^{3 / 2}(\partial \Omega)} \\
&+ \norm{\tens C_t \nabla \vect u \cdot \vect n - \vect p}_{L^2(\p \Omega)} \norm{\vect u ^*}_{L^2(\p \Omega)} \\
&+ \norm{\vect f}_{L^2(\Omega)} \norm{\vect u ^*}_{L^2(\Omega)}.
\end{aligned}
\end{equation}
In order to get the absolute value on the left-hand side, we make use of the specific structure $R_{ijkl} = r_j \delta_{jl} \delta_{ik}$ and the convexity condition to estimate pointwise
\[
\theta |\nabla \vect u^*|^2 \leq \tens R \nabla \vect u^* : \nabla \vect u^* \leq M |\nabla \vect u^*|^2,
\]
which allows us to further estimate the left-hand side of \cref{the:eqn:almostaniso} from below.

Also, using the pointwise estimate
\[
\left| \p_j \left(  R_{ijkl} \p_l u_k^*  u_i^* \right) \right| \leq
M |\nabla \vect u^*|| \vect u^*|+M|\divergence \left( \vect u^* \nabla \vect u^* \right)|,
\]
we can use the already established estimates to bound the additional terms on the right-hand side of \cref{the:eqn:almostaniso} from above.

In total, choosing $\varepsilon = \norm{\vect u - \vect u^*}_{L^2(\Omega)}^{1/2}$ and $h=\norm{\vect u - \vect u^*}_{L^2(\Omega)}^{1/4}$, we get the upper bound
\begin{align*}
\int_{0 <\varphi} |\varphi| |\nabla \vect u^* | dx \leq
\theta^{-1} K(\Omega) M C_{it} &\norm{\vect g}_{H^{3 / 2}(\partial \Omega)} \Big\{ \norm{\vect f}_{L^2(\Omega)} + \norm{\tens C_t \nabla \vect u \cdot \vect n - \vect p}_{L^2(\partial \Omega)}  \Big. \\ \Big.
        &+
        \left( \norm{\vect k}_{H^{3 / 2}(\partial \Omega)} + \norm{\vect g}_{H^{3 / 2}(\partial \Omega)} + 1 \right)
        \norm{\vect u - \vect u^*}_{L^2(\Omega)}^{1/4}
        \Big\}.
\end{align*}
\end{proof}
To derive the corresponding lower bound, we want to state one more auxiliary lemma.

\begin{lemma}[Korn's inequality, {{\cite[Lemma 3.5]{alessandrini_detecting_2004}}}]
\label{the:lem:korn}
    There exists a constant $C>0$ such that for every $\vect u \in H^1(\Omega, \R^n)$ we have
    \[
    \int_{\Omega} \left|\vect u- \vect u_\Omega- \mat W_\Omega x\right|^2 dx \leq C|\Omega|^2 \int_{\Omega}|\symgrad \vect u|^2dx,
    \]
    where
    \begin{equation}\label{the:eqn:kornobjects}
    \begin{aligned}
&\vect u_\Omega=\frac{1}{\left|\Omega\right|} \int_{\Omega} \vect u \; dx,\\
&\mat W_\Omega=\frac{1}{\left|\Omega\right|} \int_{\Omega}\symgrad \vect u \; dx.
\end{aligned}
    \end{equation}
\end{lemma}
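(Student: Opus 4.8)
The plan is to recognise this as the second Korn inequality, in which the affine field $\vect u_\Omega + \mat W_\Omega x$ plays the role of the infinitesimal rigid motion that best approximates $\vect u$. The one point requiring care at the outset is the meaning of $\mat W_\Omega$: for the estimate to hold one must read it as the average of the \emph{antisymmetric} part of $\nabla\vect u$, so that $\vect u_\Omega + \mat W_\Omega x$ is a genuine rigid motion and the right-hand side vanishes exactly on the rigid motions. (With the literal symmetric average the inequality fails, since a pure rotation $\vect u(x)=\mat W x$ with $\mat W$ skew has $\symgrad\vect u=0$ yet $\vect u-\vect u_\Omega\neq 0$.) With $\mat W_\Omega$ skew one has $\symgrad(\mat W_\Omega x)=0$, hence $\symgrad(\vect u-\vect u_\Omega-\mat W_\Omega x)=\symgrad\vect u$, and the claim reduces to bounding the $L^2$-distance of $\vect u$ to the rigid motions by $\norm{\symgrad\vect u}_{L^2}$, with an explicit dependence on the size of $\Omega$.

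First I would establish the estimate on a fixed reference domain $\tilde\Omega$ of unit scale by a compactness–contradiction argument. Suppose the bound fails: then there is a sequence $(\vect u_k)\subset H^1(\tilde\Omega,\R^n)$ with $\norm{\vect u_k-(\vect u_k)_{\tilde\Omega}-\mat W_k x}_{L^2(\tilde\Omega)}=1$ and $\norm{\symgrad\vect u_k}_{L^2(\tilde\Omega)}\to 0$. After subtracting the rigid motion one may assume each $\vect u_k$ is $L^2$-orthogonal to the finite-dimensional space of rigid motions and has unit $L^2$ norm. Invoking the qualitative second Korn inequality $\norm{\nabla\vect v}_{L^2}\le C(\norm{\vect v}_{L^2}+\norm{\symgrad\vect v}_{L^2})$ bounds $(\vect u_k)$ in $H^1(\tilde\Omega,\R^n)$; by Rellich's theorem a subsequence converges weakly in $H^1$ and strongly in $L^2$ to some $\vect u$. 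Weak lower semicontinuity forces $\symgrad\vect u=0$, so $\vect u$ is a rigid motion, while the orthogonality together with strong $L^2$ convergence gives $\norm{\vect u}_{L^2}=1$ and $\vect u\perp$ rigid motions — a contradiction.

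Next I would transfer the reference estimate to a general admissible $\Omega$ by scaling: writing $x=L\xi$ with $L$ proportional to the diameter of $\Omega$, the left-hand side $\norm{\vect u-\vect u_\Omega-\mat W_\Omega x}_{L^2(\Omega)}^2$ scales like $L^n$ while $\norm{\symgrad\vect u}_{L^2(\Omega)}^2$ scales like $L^{n-2}$ (since $\symgrad$ carries one derivative), so the reference bound produces exactly the factor $L^2$ claimed, with $|\Omega|$ read as the relevant characteristic length of the domain. Affine fields map to affine fields under the scaling, so the rigid-motion structure of the subtracted term is preserved.

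The main obstacle is the qualitative second Korn inequality used to bound $(\vect u_k)$ in $H^1$: proving it from scratch rather than quoting it requires the Lions/Nečas lemma, namely $\norm{\vect v}_{L^2}\le C(\norm{\vect v}_{H^{-1}}+\norm{\nabla\vect v}_{H^{-1}})$, which itself relies on the $\mathcal C^1$ (or at least Lipschitz) regularity and connectedness of the domain from \Cref{met:ass:domain}. The second delicate point is uniformity of the reference constant across the admissible shape class, so that the scaling argument yields a constant depending on $\Omega$ only through its size and a fixed shape; connectedness of $\Omega$ — and of $\Omega_d$ in the application — is precisely what keeps the rigid-motion kernel from enlarging and the constant from degenerating.
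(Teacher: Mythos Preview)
The paper does not prove this lemma; it is quoted from \cite[Lemma~3.5]{alessandrini_detecting_2004} and used as a black box in the subsequent argument. Your compactness--contradiction argument on a reference domain, followed by a scaling transfer, is the standard route to such Korn-type bounds and is correct in outline.

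Your observation about $\mat W_\Omega$ is the substantive point and it is accurate: with $\mat W_\Omega$ defined as the mean of the \emph{symmetric} gradient, the inequality fails on nontrivial infinitesimal rotations, exactly as you say. For the estimate to hold (and for $\vect u_\Omega + \mat W_\Omega x$ to be an infinitesimal rigid motion) one needs the mean of the \emph{skew-symmetric} part of $\nabla\vect u$; this is how the result is stated in the source and appears to be a transcription slip here. The same sign issue recurs in \Cref{met:ass:boundary-data}, where the condition $\mat W = \mat W^t$ on the rigid-motion matrix should read $\mat W = -\mat W^t$. With that correction your proof goes through.

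One minor point on the scaling step: your argument produces a prefactor $L^2$ with $L$ a characteristic length, whereas the statement writes $|\Omega|^2$. In the cited source the domain is normalised so that $|\Omega|$ serves as a linear scale; you were right to flag the interpretation rather than force a volume reading, which would be dimensionally inconsistent.
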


Next, we recall a unique continuation principle for the gradient, as originally derived in \cite{lin_detecting_2018}.

\begin{lemma}[{{\cite[Lemma 5.2]{lin_detecting_2018}}}]
    Let $d>0$ such that $\Omega_{d}$ is still a connected domain. Furthermore, set $r=d/9$. Then, for every $x_0 \in \Omega_{9d}$, the gradient of the ground-truth solution $\vect u^*$ fulfils 
    \[
    \int_{B_r(x_0)}\left|\nabla \vect u^* \right|^2 \geq C_d \int_{\Omega}\left|\nabla \vect u^*\right|^2,
    \]
    where $C_d = C\left(\theta, M,\Omega, d\right)$.
\end{lemma}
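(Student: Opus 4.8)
The plan is to derive this propagation-of-smallness statement from a quantitative unique continuation inequality of three-spheres type for the gradient of solutions to the elasticity system, which is the central technical ingredient imported from \cite{lin_detecting_2018}. First I would record the three-spheres inequality in the form I need: for concentric balls $B_\rho(x) \subset B_{3\rho}(x) \subset B_{9\rho}(x) \subset \Omega$ and a solution $\vect u^*$, there exist $\alpha \in (0,1)$ and $C>0$, depending only on the ellipticity constant $\theta$ and the regularity bound $M$ of the elasticity tensor from \Cref{met:ass:elasticity_boundedness} — and hence uniform over the admissible class of field parameters — such that
\[
\int_{B_{3\rho}(x)} |\nabla \vect u^*|^2 \, dx \leq C \left( \int_{B_\rho(x)} |\nabla \vect u^*|^2 \, dx \right)^{\alpha} \left( \int_{B_{9\rho}(x)} |\nabla \vect u^*|^2 \, dx \right)^{1-\alpha}.
\]
The ratio $1:3:9$ and the restriction to centres in $\Omega_{9d}$ with $r = d/9$ are dictated precisely by the requirement that every ball entering the chain below remain compactly inside $\Omega$.

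Second, I would run a chain-of-balls argument. Writing $G \defeq \int_\Omega |\nabla \vect u^*|^2$ and $L \defeq \int_{B_r(x_0)} |\nabla \vect u^*|^2$, I join $x_0$ to any target point $y$ (with $B_{9r}(y)\subset\Omega$) by a finite chain $x_0 = z_0, z_1, \dots, z_m = y$ satisfying $|z_{k+1}-z_k| \leq r$ — so that $B_r(z_{k+1}) \subset B_{3r}(z_k)$ — with every $B_{9r}(z_k) \subset \Omega$; the connectedness of $\Omega_d$ guarantees such a chain with $m$ bounded solely in terms of $\operatorname{diam}(\Omega)$ and $d$. Setting $a_k \defeq \int_{B_r(z_k)} |\nabla \vect u^*|^2$ and bounding each outer integral by $G$, the three-spheres inequality yields $a_{k+1} \leq \int_{B_{3r}(z_k)} |\nabla \vect u^*|^2 \leq C\, a_k^{\alpha} G^{1-\alpha}$. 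Normalising by $G$ turns this into $a_{k+1}/G \leq C\,(a_k/G)^{\alpha}$, which iterates to
\[
\int_{B_r(y)} |\nabla \vect u^*|^2 \, dx \leq C'\, L^{\beta} G^{1-\beta}, \qquad \beta \defeq \alpha^{m} \in (0,1).
\]

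Third, I would cover the interior region by finitely many such balls $B_r(y_i)$ and sum, obtaining $\int_{\Omega_d}|\nabla\vect u^*|^2 \leq C''\, L^\beta G^{1-\beta}$, with $C''$ and $\beta$ depending only on $\theta, M, \Omega, d$. The residual energy in the boundary layer $\Omega \setminus \Omega_d$ would then be absorbed through a Caccioppoli-type estimate together with the regularity of the boundary data, so that $G \leq C'''\int_{\Omega_d}|\nabla \vect u^*|^2$. Combining these gives $G \leq \hat C\, L^{\beta} G^{1-\beta}$; the decisive feature is that the \emph{same} global quantity $G$ appears on both sides, so cancelling $G^{1-\beta}$ produces the \emph{linear} lower bound $G \leq \hat C^{1/\beta} L$, that is $L \geq C_d\, G$ with $C_d = C_d(\theta, M, \Omega, d)$ (the degenerate case $G=0$ being trivial).

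The main obstacle I anticipate is not the chain bookkeeping but establishing the three-spheres inequality itself with constants $C, \alpha$ that are \emph{uniform over the field parameter} $t^*$; this is where the high regularity of \Cref{met:ass:elasticity_boundedness} and the anisotropic structure of $\tens C_{t^*}$ genuinely enter, and it is exactly the content supplied by \cite{lin_detecting_2018}. A secondary subtlety is verifying that the accumulated exponent $\beta = \alpha^{m}$, though potentially very small, still depends only on the admissible quantities, so that $C_d$ remains strictly positive and does not degenerate with the particular solution.
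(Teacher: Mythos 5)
First, note that the paper does not actually prove this statement: it is imported verbatim from \cite[Lemma 5.2]{lin_detecting_2018}, and the only in-paper commentary is the remark explaining that the restriction $r=d/9$ stems from the radii condition in the three-spheres inequality of \cite[Corollary 4.10]{lin_detecting_2018}. Your overall architecture --- a gradient three-spheres inequality with constants uniform over the admissible class of $t^*$, propagated along a chain of balls inside $\Omega_d$, then a finite covering, then cancellation of the global energy $G$ against itself --- is exactly the standard mechanism behind such propagation-of-smallness results and is consistent with what the paper's remark indicates the cited proof does. The chain bookkeeping (normalising by $G$, iterating to $a_m \leq C' L^{\alpha^m}G^{1-\alpha^m}$, taking the worst exponent over a uniformly bounded chain length) is correct.

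The genuine gap is your third step, the boundary-layer absorption $G \leq C'''\int_{\Omega_d}|\nabla \vect u^*|^2$ ``through a Caccioppoli-type estimate.'' A Caccioppoli inequality bounds interior gradients by lower-order quantities on larger sets; it cannot bound the \emph{global} energy by the \emph{interior} energy. Worse, the inequality you need is false for general solutions with a constant depending only on $\theta, M, \Omega, d$: already for harmonic functions $u=r^n\cos(n\theta)$ on the unit disk, the ratio $\int_{\Omega_d}|\nabla u|^2/\int_\Omega|\nabla u|^2=(1-d)^{2n}\to 0$ as $n\to\infty$, i.e.\ high-frequency boundary data concentrates essentially all of the energy in the boundary layer. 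The step therefore genuinely requires the a-priori control on the boundary data from \Cref{met:ass:boundary-data}: the global elliptic estimate $\norm{\nabla \vect u^*}_{L^2(\Omega)}\leq C\norm{\vect g}_{H^{3/2}(\partial\Omega)}$ bounds the boundary-layer energy from above, while the non-rigidity condition $\Theta(\vect g)\geq\delta_0$ together with Korn's inequality (the paper's separate ``global bound on the gradient'' lemma) bounds the remaining energy from below; only their combination controls the boundary layer, and it necessarily makes the constant depend (at least implicitly, through the standing assumptions of \cite{lin_detecting_2018}) on the data and not merely on $(\theta,M,\Omega,d)$. This is precisely why the paper keeps the Korn-based lower bound as a separate lemma and only in the subsequent corollary obtains a constant depending on $\vect g$ and $\delta_0$. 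Your secondary worry about the smallness of $\beta=\alpha^m$ is legitimate but harmless; the real obstruction is the one above.
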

\begin{remark}
    The restriction $r=d/9$ arises from a technical condition on the radii of the three-spheres inequality, as derived in \cite[Corollary 4.10]{lin_detecting_2018}. Without modifying the proof of \cite[Lemma 5.2]{lin_detecting_2018}, this constraint can be readily generalised to $r=d/\epsilon$ for any $\epsilon>1$. The broader framework of three-spheres inequalities presented in \cite{alessandrini_stability_2009} illustrates how such constraints on the radii can be bypassed. Nevertheless, for our purposes, this restriction does not pose any significant issues.
\end{remark}
We also need a global bound on the gradient.
\begin{lemma}
    There exists a constant $C=C(\Omega,\vect g, \delta_0)>0$, where $\delta_0>0$ is the constant from \Cref{met:ass:boundary-data}, such that for all $\vect u \in H^1(\Omega, \R^3)$ subject to $\vect u = \vect g$ on $\partial \Omega$ we have 
    \[
    \norm{\nabla \vect u}_{L^2(\Omega)} \geq C.
    \]
\end{lemma}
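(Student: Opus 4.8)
The plan is to prove this quantitatively and directly, rather than by a compactness/contradiction argument, by pairing Korn's inequality (\Cref{the:lem:korn}) with the boundedness of the trace operator and the non-degeneracy condition $\Theta(\vect g)\geq \delta_0$ from \Cref{met:ass:boundary-data}. The guiding idea is that subtracting from $\vect u$ its \emph{averaged affine field} produces a function whose full $H^1$ norm is controlled by $\norm{\nabla \vect u}_{L^2(\Omega)}$ alone, while its boundary trace is forced to stay away from zero by $\Theta(\vect g)$.

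First I would fix an arbitrary $\vect u \in H^1(\Omega,\R^3)$ with $\vect u = \vect g$ on $\p\Omega$ and introduce $\vect a \defeq \vect u_\Omega$ and the symmetric matrix $\mat W \defeq \mat W_\Omega$ as defined in \cref{the:eqn:kornobjects}. Since $\mat W = \mat W^t$, the pair $(\vect a,\mat W)$ is an admissible competitor in the minimisation defining $\Theta(\vect g)$, so
\[
\delta_0 \leq \Theta(\vect g) \leq \norm{\vect g - (\vect a + \mat W x)}_{H^{1/2}(\p\Omega)}.
\]
Because $\vect u|_{\p\Omega} = \vect g$ and $\vect a + \mat W x$ is smooth, the right-hand side is exactly the $H^{1/2}(\p\Omega)$ norm of the trace of $\vect v \defeq \vect u - \vect a - \mat W x$. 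Invoking boundedness of the trace operator $H^1(\Omega)\to H^{1/2}(\p\Omega)$, valid under the $\mathcal C^1$ regularity of \Cref{met:ass:domain}, then yields $\delta_0 \leq C_{tr}\norm{\vect v}_{H^1(\Omega)}$.

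The crux is to bound $\norm{\vect v}_{H^1(\Omega)}$ purely by $\norm{\nabla \vect u}_{L^2(\Omega)}$. For the $L^2$ part this is precisely \Cref{the:lem:korn}, which gives $\norm{\vect v}_{L^2(\Omega)} \leq C|\Omega|\,\norm{\symgrad \vect u}_{L^2(\Omega)} \leq C|\Omega|\,\norm{\nabla \vect u}_{L^2(\Omega)}$. For the gradient part I would use $\nabla \vect v = \nabla \vect u - \mat W$ together with the elementary bound $|\mat W| \leq |\Omega|^{-1/2}\norm{\symgrad \vect u}_{L^2(\Omega)}$ (Cauchy--Schwarz applied to the defining average), so that $\norm{\mat W}_{L^2(\Omega)} = |\Omega|^{1/2}|\mat W| \leq \norm{\symgrad \vect u}_{L^2(\Omega)} \leq \norm{\nabla \vect u}_{L^2(\Omega)}$ and hence $\norm{\nabla \vect v}_{L^2(\Omega)} \leq 2\norm{\nabla \vect u}_{L^2(\Omega)}$. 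Combining the two pieces gives $\norm{\vect v}_{H^1(\Omega)} \leq C(\Omega)\,\norm{\nabla \vect u}_{L^2(\Omega)}$.

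Chaining the estimates produces $\delta_0 \leq C_{tr}\,C(\Omega)\,\norm{\nabla \vect u}_{L^2(\Omega)}$, and since $\vect u$ was arbitrary the claim follows with $C = \delta_0 / (C_{tr}\,C(\Omega))$, depending only on $\Omega$ through the Korn and trace constants and on $\vect g$ through $\delta_0$. I expect the gradient estimate to be the only genuinely delicate step: one must subtract the \emph{full} average affine field $\vect a + \mat W x$, not merely the constant $\vect a$, so that both the $L^2$ and the gradient parts of $\vect v$ remain controllable by $\norm{\nabla \vect u}_{L^2(\Omega)}$, which is exactly the mechanism that converts the symmetric-affine non-degeneracy of $\vect g$ into a lower bound on the interior gradient.
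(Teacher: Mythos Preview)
Your proof is correct and follows essentially the same route as the paper's: subtract the averaged symmetric-affine field $\vect u_\Omega + \mat W_\Omega x$, use Korn's inequality \Cref{the:lem:korn} for the $L^2$ part, bound the gradient of the remainder via $\norm{\mat W_\Omega}_{L^2(\Omega)} \leq \norm{\symgrad \vect u}_{L^2(\Omega)}$, apply the trace theorem, and finish with $\Theta(\vect g)\geq \delta_0$. Your presentation is in fact a bit tidier---you make the Cauchy--Schwarz step for $|\mat W_\Omega|$ explicit and obtain the sharper factor $2$ rather than the paper's $3$ in the gradient estimate.
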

\begin{proof}
    Due to Korn's inequality as given by Lemma \ref{the:lem:korn}, we have 
    \[
    \norm{\vect u - (\vect u_\Omega + \mat W_\Omega x)}_{L^2(\Omega)}^2 \leq C(\Omega)\norm{\nabla \vect u}_{L^2(\Omega)}^2,
    \]
    where $\vect u_\Omega + \mat W_\Omega$ are as defined in \cref{the:eqn:kornobjects}.
    Moreover,
    \[
    \norm{\nabla (\vect u - (\vect u_\Omega + \mat W_\Omega x))}_{L^2(\Omega)}^2 \leq \norm{\nabla \vect u}_{L^2(\Omega)}^2 + \norm{\mat W_\Omega}_{L^2(\Omega)}^2 = 3 \norm{\nabla \vect u}_{L^2(\Omega)}^2,
    \]
    so that by the trace theorem (e.g. \cite[A 8.6]{alt_linear_2016}) and \Cref{met:ass:boundary-data}, we get
    \begin{align*}
    \norm{\nabla \vect u}_{L^2(\Omega)}^2 &\geq C(\Omega) \norm{\vect u - (\vect u_\Omega + \mat W_\Omega x)}_{H^1(\Omega)}^2 \\
    & \geq C(\Omega)C_{t} \norm{\vect g - (\vect u_r + \mat W_r x)}_{H^{1/2}(\partial \Omega)}^2 \\
    & \geq C(\Omega) C_{t} \delta_0.
    \end{align*}
\end{proof}
We combine the previous two results in order to be able to apply the general Theorem \ref{the:thm:stabilityabstract}.
\begin{corollary}
    Let $d>0$ such that $\Omega_{9d}$ is still a connected domain. Furthermore, set $r=d/9$. Then, for every $x_0 \in \Omega_{d}$, the gradient of the ground-truth solution $\vect u^*$ fulfils 
    \[
    \int_{B_r(x_0)}\left|\nabla u_0\right|^2 \geq C_d,
    \]
    where $C_d = C\left(\theta, M,d, \vect g, \delta_0, \Omega\right)$.
\end{corollary}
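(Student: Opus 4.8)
The plan is to obtain the claimed absolute lower bound by chaining the two preceding lemmas: the unique continuation principle for the gradient (the adaptation of \cite[Lemma 5.2]{lin_detecting_2018}), which controls the local integral over $B_r(x_0)$ from below by a fixed fraction of the global Dirichlet energy, together with the global gradient bound, which shows that this global energy is itself bounded away from zero.

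First I would invoke the unique continuation lemma. With the parameter chosen so that the relevant shrunken domain is connected and $r = d/9$, it provides a constant $C_d = C(\theta, M, \Omega, d) > 0$ with
\[
\int_{B_r(x_0)} |\nabla \vect u^*|^2 \, dx \geq C_d \int_\Omega |\nabla \vect u^*|^2 \, dx
\]
for every admissible centre $x_0$. Some care is needed to match the nested-domain indices between the statement of that lemma (centres in $\Omega_{9d}$) and the present corollary, but this amounts only to relabelling $d$ and leaves the structure of the argument untouched.

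Next I would bound the global energy from below. Since $\vect u^*$ solves the ground-truth system \cref{the:eqn:ground-truth}, it satisfies the Dirichlet condition $\vect u^* = \vect g$ on $\partial \Omega$, so the global gradient lemma applies verbatim and yields $\norm{\nabla \vect u^*}_{L^2(\Omega)}^2 \geq C$ for a constant $C = C(\Omega, \vect g, \delta_0) > 0$. It is exactly here that the non-rigidity hypothesis $\Theta(\vect g) \geq \delta_0 > 0$ of \Cref{met:ass:boundary-data} is indispensable: without it the right-hand side could degenerate to zero, namely when $\vect g$ coincides with an admissible affine field $\vect a + \mat W x$. Combining the two displays gives
\[
\int_{B_r(x_0)} |\nabla \vect u^*|^2 \, dx \geq C_d \, C =: \tilde C_d > 0 ,
\]
and tracking the dependencies shows $\tilde C_d = C(\theta, M, d, \vect g, \delta_0, \Omega)$, as claimed.

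I do not anticipate a genuine obstacle, since the corollary is essentially the composition of two already-established estimates; the only delicate point is bookkeeping—checking that the radius constraint $r = d/9 \leq d$ and the connectedness requirement on the shrunken domain are simultaneously compatible with placing $x_0$ in the prescribed interior region, and that the two constants merge into a single constant with the advertised dependence. The resulting inequality is then precisely the lower-bound hypothesis \cref{the:eqn:interior-estimate-2} required to invoke the abstract stability \Cref{the:thm:stabilityabstract}.
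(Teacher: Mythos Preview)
Your proposal is correct and matches the paper's own approach: the corollary is stated immediately after the sentence ``We combine the previous two results in order to be able to apply the general Theorem \ref{the:thm:stabilityabstract},'' and the intended argument is precisely the concatenation of the unique continuation lemma with the global gradient lower bound that you outline. Your remarks on the domain-index bookkeeping ($\Omega_{9d}$ versus $\Omega_d$) and on the role of the non-rigidity condition $\Theta(\vect g)\geq \delta_0$ are apt and go slightly beyond what the paper spells out.
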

This leads to a very similar stability estimate as for the isotropic case.
\begin{corollary}
    Let $d>0$ be such that $\Omega_d$ is still a connected domain. Then, there exists a constant $C=C(\Omega, M, \norm{\vect k}_{H^{3 / 2}(\partial \Omega)}, \norm{\vect g}_{H^{3 / 2}(\partial \Omega)})>0$ and $\delta \in [1/2,1)$ such that
    \begin{equation*}
        \left\|t-t^*\right\|_{L^{\infty}\left(\Omega_d\right)} \leq 
        C \Big\{ \norm{\vect f}_{L^2(\Omega)} + \norm{\tens {C}_t \symgrad \vect u \cdot \vect n - \vect p}_{L^2(\partial \Omega)} +
        \norm{\vect u - \vect u^*}_{L^2(\Omega)}^{1/4}
        \Big\}^\delta.
    \end{equation*}
    Moreover, the constants $C, \delta$ do not depend on the field parameters $t, t^*$.
\end{corollary}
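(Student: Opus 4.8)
The plan is to assemble the final estimate by specialising the abstract stability result of \Cref{the:thm:stabilityabstract} to the anisotropic setting, feeding it the upper bound of \Cref{lem:integral_estimate} and the interior lower bound obtained from the unique-continuation corollary. First I would set $\varphi = t - t^*$ and let $x_0 \in \bar\Omega_d$ be a point where $|\varphi|$ attains its maximum over the compact set $\bar\Omega_d$; such a point exists by continuity, since by \Cref{met:ass:elasticity_boundedness} both $t$ and $t^*$ lie in $\mathcal C^1(\bar\Omega)$ with $\mathcal C^1$-norm bounded by $M$. In particular $\varphi$ is Lipschitz with constant at most $2M$, supplying the Lipschitz hypothesis of \Cref{the:thm:stabilityabstract}.

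Next I would fix the tensors in \Cref{the:thm:stabilityabstract} as the fourth-order identity tensor $\tens P = \tens S = \mathbb I$, so that $\tens P \nabla \vect u^* = \nabla \vect u^*$ and $\tens S \nabla \vect u^* \cdot \nabla \vect u^* = |\nabla \vect u^*|^2$, with the constant $s = 1$. With this choice the upper-bound hypothesis is exactly \Cref{lem:integral_estimate}: setting
\[
\varepsilon \coloneqq C \Big\{ \norm{\vect f}_{L^2(\Omega)} + \norm{\tens C_t \nabla \vect u \cdot \bn - \vect p}_{L^2(\partial \Omega)} + \norm{\vect u - \vect u^*}_{L^2(\Omega)}^{1/4} \Big\},
\]
that lemma yields $\int_\Omega |\varphi|\, |\nabla \vect u^*|^2 \, dx \leq \varepsilon$, which is precisely $\int_\Omega |\varphi|\, \tens S \nabla \vect u^* \cdot \nabla \vect u^* \, dx \leq \varepsilon$. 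The matching interior lower bound $\int_{B_r(x_0)} |\nabla \vect u^*|^2 \, dx \geq \eta$ with $r = d/9$ and $\eta = C_d > 0$ is exactly the combined corollary deduced from the unique-continuation principle of \cite[Lemma 5.2]{lin_detecting_2018} together with the global gradient estimate $\norm{\nabla \vect u^*}_{L^2(\Omega)} \geq C$, itself established via Korn's inequality (\Cref{the:lem:korn}) and the non-rigidity condition of \Cref{met:ass:boundary-data}.

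Having verified all hypotheses, I would invoke \Cref{the:thm:stabilityabstract} to obtain $\norm{\varphi}_{\mathcal C(\bar\Omega_d)} \leq C \varepsilon^{\delta}$ with $\delta \in [1/2,1)$, and then substitute the definition of $\varepsilon$ to recover the claimed bound on $\norm{t - t^*}_{L^\infty(\Omega_d)}$. The uniformity assertion follows by tracking the dependencies: the constant of \Cref{the:thm:stabilityabstract} depends only on $M, d, \eta, s$; the lower-bound constant $\eta = C_d$ depends on $\theta, M, d, \vect g, \delta_0, \Omega$; and the constant from \Cref{lem:integral_estimate} depends on $\Omega, M, \norm{\vect k}_{H^{3/2}(\partial\Omega)}, \norm{\vect g}_{H^{3/2}(\partial\Omega)}$. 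Since each of these enters the field parameters $t, t^*$ only through the a-priori bound $M$ and the fixed convexity constant $\theta$, the resulting $C, \delta$ are independent of $t, t^*$.

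The main obstacle I anticipate is the bookkeeping of the nested interior domains: \Cref{the:thm:stabilityabstract} requires the lower bound to hold at the specific maximiser $x_0$, whereas the unique-continuation corollary is phrased for $x_0 \in \Omega_d$ under connectedness of $\Omega_{9d}$ (with $r = d/9$). One must therefore ensure that the maximiser of $|\varphi|$ over $\bar\Omega_d$ genuinely lies in the interior subdomain on which the lower bound is valid, possibly by passing to $\Omega_{9d}$ or slightly shrinking $d$, and confirm that the connectedness required by the cited lemmas is compatible with the stated hypothesis on $\Omega_d$. Beyond this geometric care, the argument is a direct substitution of the preceding lemmas into the abstract theorem.
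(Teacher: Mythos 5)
Your proposal is correct and is exactly the assembly the paper intends: the corollary is stated as the culmination of Appendix C, obtained by feeding the upper bound of \Cref{lem:integral_estimate} and the interior lower bound from the unique-continuation corollary (via Korn's inequality and the non-rigidity of $\vect g$) into the abstract \Cref{the:thm:stabilityabstract}, with the constants tracked just as you describe. The domain-nesting caveat you flag ($\Omega_d$ versus $\Omega_{9d}$ connectedness and the location of the maximiser of $|\varphi|$) is a genuine minor imprecision that is present in the paper itself rather than a gap in your argument.
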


\printbibliography

@article{alessandrini_detecting_2004,
	title        = {Detecting an {Inclusion} in an {Elastic} {Body} by {Boundary} {Measurements}},
	author       = {Alessandrini, Giovanni and Morassi, Antonino and Rosset, Edi},
	year         = {2004},
	month        = {1},
	journal      = {SIAM Review},
	volume       = {46},
	number       = {3},
	pages        = {477--498},
	doi          = {10.1137/S0036144504442098}
}

@article{alessandrini_stability_2009,
	title        = {The stability for the {Cauchy} problem for elliptic equations},
	author       = {Alessandrini, Giovanni and Rondi, Luca and Rosset, Edi and Vessella, Sergio},
	year         = {2009},
	month        = {12},
	journal      = {Inverse Problems},
	volume       = {25},
	number       = {12},
	pages        = {123004},
	doi          = {10.1088/0266-5611/25/12/123004}
}

@book{alt_linear_2016,
	title        = {Linear {Functional} {Analysis}},
	author       = {Alt, Hans Wilhelm},
	year         = {2016},
	publisher    = {Springer London},
	address      = {London},
	series       = {Universitext},
	doi          = {10.1007/978-1-4471-7280-2}
}

@article{anagnostopoulos_residual-based_2024,
	title        = {Residual-based attention in physics-informed neural networks},
	author       = {Anagnostopoulos, Sokratis J. and Toscano, Juan Diego and Stergiopulos, Nikolaos and Karniadakis, George Em},
	year         = {2024},
	month        = {3},
	journal      = {Computer Methods in Applied Mechanics and Engineering},
	volume       = {421},
	pages        = {116805},
	doi          = {10.1016/j.cma.2024.116805}
}

@article{argyris_tuba_1968,
	title        = {The {TUBA} {Family} of {Plate} {Elements} for the {Matrix} {Displacement} {Method}},
	author       = {Argyris, J. H. and Fried, I. and Scharpf, D. W.},
	year         = {1968},
	month        = {8},
	journal      = {The Aeronautical Journal},
	volume       = {72},
	number       = {692},
	pages        = {701--709},
	doi          = {10.1017/S000192400008489X}
}

@article{bai_physics-informed_2023,
	title        = {A physics-informed neural network technique based on a modified loss function for computational {2D} and {3D} solid mechanics},
	author       = {Bai, Jinshuai and Rabczuk, Timon and Gupta, Ashish and Alzubaidi, Laith and Gu, Yuantong},
	year         = {2023},
	month        = {3},
	journal      = {Computational Mechanics},
	volume       = {71},
	number       = {3},
	pages        = {543--562},
	doi          = {10.1007/s00466-022-02252-0}
}

@misc{balcerak_physics-regularized_2024,
	title        = {Physics-{Regularized} {Multi}-{Modal} {Image} {Assimilation} for {Brain} {Tumor} {Localization}},
	author       = {Balcerak, Michal and Amiranashvili, Tamaz and Wagner, Andreas and Weidner, Jonas and Karnakov, Petr and Paetzold, Johannes C. and Ezhov, Ivan and Koumoutsakos, Petros and Wiestler, Benedikt and Menze, Bjoern},
	year         = {2024},
	publisher    = {arXiv},
	doi          = {10.48550/arXiv.2409.20409}
}

@article{bredies_higher-order_2020,
	title        = {Higher-order total variation approaches and generalisations},
	author       = {Bredies, Kristian and Holler, Martin},
	year         = {2020},
	month        = {12},
	journal      = {Inverse Problems},
	volume       = {36},
	number       = {12},
	pages        = {123001},
	doi          = {10.1088/1361-6420/ab8f80}
}

@book{brezis_functional_2011,
	title        = {Functional {Analysis}, {Sobolev} {Spaces} and {Partial} {Differential} {Equations}},
	author       = {Brezis, Haim},
	year         = {2011},
	publisher    = {Springer New York},
	address      = {New York, NY},
	doi          = {10.1007/978-0-387-70914-7}
}

@article{caforio_physics-informed_2024,
	title        = {Physics-informed neural network estimation of material properties in soft tissue nonlinear biomechanical models},
	author       = {Caforio, Federica and Regazzoni, Francesco and Pagani, Stefano and Karabelas, Elias and Augustin, Christoph and Haase, Gundolf and Plank, Gernot and Quarteroni, Alfio},
	year         = {2024},
	month        = {7},
	journal      = {Computational Mechanics},
	doi          = {10.1007/s00466-024-02516-x}
}

@article{de_ryck_approximation_2021,
	title        = {On the approximation of functions by tanh neural networks},
	author       = {De Ryck, Tim and Lanthaler, Samuel and Mishra, Siddhartha},
	year         = {2021},
	month        = {11},
	journal      = {Neural Networks},
	volume       = {143},
	pages        = {732--750},
	doi          = {10.1016/j.neunet.2021.08.015}
}

@article{de_ryck_error_2022,
	title        = {Error analysis for physics-informed neural networks ({PINNs}) approximating {Kolmogorov} {PDEs}},
	author       = {De Ryck, Tim and Mishra, Siddhartha},
	year         = {2022},
	month        = {12},
	journal      = {Advances in Computational Mathematics},
	volume       = {48},
	number       = {6},
	pages        = {79},
	doi          = {10.1007/s10444-022-09985-9}
}

@article{di_fazio_stable_2017,
	title        = {Stable determination of a {Lamé} coefficient by one internal measurement of displacement},
	author       = {Di Fazio, Giuseppe and Francini, Elisa and Raciti, Fabio and Vessella, Sergio},
	year         = {2017},
	month        = {8},
	journal      = {Journal of Mathematical Analysis and Applications},
	volume       = {452},
	number       = {1},
	pages        = {388--400},
	doi          = {10.1016/j.jmaa.2017.03.013}
}

@article{doi:10.1137/100802001,
	title        = {Efficiency of coordinate descent methods on huge-scale optimization problems},
	author       = {Nesterov, Yu.},
	year         = {2012},
	journal      = {SIAM Journal on Optimization},
	volume       = {22},
	number       = {2},
	pages        = {341--362},
	doi          = {10.1137/100802001}
}

@misc{doumeche_convergence_2023,
	title        = {Convergence and error analysis of {PINNs}},
	author       = {Doumèche, Nathan and Biau, Gérard and Boyer, Claire},
	year         = {2023},
	month        = {5},
	publisher    = {arXiv},
	doi          = {https://arxiv.org/abs/2305.01240}
}

@book{grossmann_numerical_2007,
	title        = {Numerical {Treatment} of {Partial} {Differential} {Equations}},
	author       = {Grossmann, Christian and Roos, Hans-Gorg and Stynes, Martin},
	year         = {2007},
	publisher    = {Springer Berlin Heidelberg},
	address      = {Berlin, Heidelberg},
	doi          = {10.1007/978-3-540-71584-9}
}

@book{gunzburger_least-squares_2009,
	title        = {Least-{Squares} {Finite} {Element} {Methods}},
	author       = {Gunzburger, Max D. and Bochev, Pavel B.},
	year         = {2009},
	publisher    = {Springer New York},
	address      = {New York, NY},
	series       = {Applied {Mathematical} {Sciences}},
	volume       = {166},
	doi          = {10.1007/b13382}
}

@article{haber2001preconditioned,
	title        = {Preconditioned all-at-once methods for large, sparse parameter estimation problems},
	author       = {Haber, Eldad and Ascher, Uri M},
	year         = {2001},
	journal      = {Inverse Problems},
	publisher    = {IOP Publishing},
	volume       = {17},
	number       = {6},
	pages        = {1847},
    doi          = {10.1088/0266-5611/17/6/319}
}

@article{haghighat_physics-informed_2021,
	title        = {A physics-informed deep learning framework for inversion and surrogate modeling in solid mechanics},
	author       = {Haghighat, Ehsan and Raissi, Maziar and Moure, Adrian and Gomez, Hector and Juanes, Ruben},
	year         = {2021},
	month        = {6},
	journal      = {Computer Methods in Applied Mechanics and Engineering},
	volume       = {379},
	pages        = {113741},
	doi          = {10.1016/j.cma.2021.113741}
}

@misc{hofler_physics-informed_2025,
	title        = {Physics-informed neural network estimation of active material properties in time-dependent cardiac biomechanical models},
	author       = {Höfler, Matthias and Regazzoni, Francesco and Pagani, Stefano and Karabelas, Elias and Augustin, Christoph and Haase, Gundolf and Plank, Gernot and Caforio, Federica},
	year         = {2025},
	month        = {5},
	publisher    = {arXiv},
	doi          = {10.48550/arXiv.2505.03382}
}

@book{hollig_finite_2003,
	title        = {Finite {Element} {Methods} with {B}-{Splines}},
	author       = {Höllig, Klaus},
	year         = {2003},
	month        = {1},
	publisher    = {Society for Industrial and Applied Mathematics},
	doi          = {10.1137/1.9780898717532}
}

@article{isakov_weak_2010,
	title        = {Weak {Carleman} estimates with two large parameters for secondorder operators and applications to elasticity with residual stress},
	author       = {Isakov, Victor and Kim, Nanhee},
	year         = {2010},
	journal      = {Discrete \& Continuous Dynamical Systems - A},
	volume       = {27},
	number       = {2},
	pages        = {799--825},
	doi          = {10.3934/dcds.2010.27.799}
}

@article{kamali_elasticity_2023,
	title        = {Elasticity imaging using physics-informed neural networks: {Spatial} discovery of elastic modulus and {Poisson}'s ratio},
	author       = {Kamali, Ali and Sarabian, Mohammad and Laksari, Kaveh},
	year         = {2023},
	month        = {1},
	journal      = {Acta Biomaterialia},
	volume       = {155},
	pages        = {400--409},
	doi          = {10.1016/j.actbio.2022.11.024}
}

@article{karnakov_solving_2023,
	title        = {Solving inverse problems in physics by optimizing a discrete loss: {Fast} and accurate learning without neural networks},
	author       = {Karnakov, Petr and Litvinov, Sergey and Koumoutsakos, Petros},
	year         = {2023},
	month        = {12},
	journal      = {PNAS Nexus},
	volume       = {3},
	number       = {1},
	pages        = {pgae005},
	doi          = {10.1093/pnasnexus/pgae005},
	editor       = {Abbott, Derek}
}

@article{lin_detecting_2018,
	title        = {Detecting an inclusion in an elastic body with residual stress},
	author       = {Lin, Chia-Hung},
	year         = {2018},
	month        = {4},
	journal      = {Nonlinear Analysis: Real World Applications},
	volume       = {40},
	pages        = {148--169},
	doi          = {10.1016/j.nonrwa.2017.08.011}
}

@article{mishra_estimates_2022,
	title        = {Estimates on the generalization error of physics-informed neural networks for approximating a class of inverse problems for {PDEs}},
	author       = {Mishra, Siddhartha and Molinaro, Roberto},
	year         = {2022},
	month        = {4},
	journal      = {IMA Journal of Numerical Analysis},
	volume       = {42},
	number       = {2},
	pages        = {981--1022},
	doi          = {10.1093/imanum/drab032}
}

@article{mishra_estimates_2023,
	title        = {Estimates on the generalization error of physics-informed neural networks for approximating {PDEs}},
	author       = {Mishra, Siddhartha and Molinaro, Roberto},
	year         = {2023},
	month        = {2},
	journal      = {IMA Journal of Numerical Analysis},
	volume       = {43},
	number       = {1},
	pages        = {1--43},
	doi          = {10.1093/imanum/drab093}
}

@misc{morina_growth_2024,
	title        = {On the growth of the parameters of approximating {ReLU} neural networks},
	author       = {Morina, Erion and Holler, Martin},
	year         = {2024},
	month        = {6},
	publisher    = {arXiv},
	doi          = {10.48550/arXiv.2406.14936}
}

@misc{pozzi_reconstruction_2024,
	title        = {Reconstruction of the local contractility of the cardiac muscle from deficient apparent kinematics},
	author       = {Pozzi, Giulia and Ambrosi, Davide and Pezzuto, Simone},
	year         = {2024},
	month        = {4},
	publisher    = {arXiv},
	doi          = {10.48550/arXiv.2404.11137}
}

@article{romer2025reduced,
	title        = {Reduced and all-at-once approaches for model calibration and discovery in computational solid mechanics},
	author       = {Römer, Ulrich and Hartmann, Stefan and Tröger, Jendrik-Alexander and Anton, David and Wessels, Henning and Flaschel, Moritz and De Lorenzis, Laura},
	year         = {2025},
	journal      = {Applied Mechanics Reviews},
	publisher    = {American Society of Mechanical Engineers},
	volume       = {77},
	number       = {4},
	pages        = {040801},
    doi          = {10.1115/1.4066118}
}

@article{sommer2015biomechanical,
	title        = {Biomechanical properties and microstructure of human ventricular myocardium},
	author       = {Sommer, Gerhard and Schriefl, Andreas J and Andr{\"a}, Michaela and Sacherer, Michael and Viertler, Christian and Wolinski, Heimo and Holzapfel, Gerhard A},
	year         = {2015},
	journal      = {Acta Biomaterialia},
	publisher    = {Elsevier},
	volume       = {24},
	pages        = {172--192},
    doi          = {10.1016/j.actbio.2015.06.031}
}

@book{steinbach_numerical_2008,
	title        = {Numerical {Approximation} {Methods} for {Elliptic} {Boundary} {Value} {Problems}},
	author       = {Steinbach, Olaf},
	year         = {2008},
	publisher    = {Springer New York},
	address      = {New York, NY},
	doi          = {10.1007/978-0-387-68805-3}
}

@article{sukumar_exact_2022,
	title        = {Exact imposition of boundary conditions with distance functions in physics-informed deep neural networks},
	author       = {Sukumar, N. and Srivastava, Ankit},
	year         = {2022},
	month        = {2},
	journal      = {Computer Methods in Applied Mechanics and Engineering},
	volume       = {389},
	pages        = {114333},
	doi          = {10.1016/j.cma.2021.114333}
}

@article{tancik_fourier_2020,
	title        = {Fourier {Features} {Let} {Networks} {Learn} {High} {Frequency} {Functions} in {Low} {Dimensional} {Domains}},
	author       = {Tancik, Matthew and Srinivasan, Pratul P and Mildenhall, Ben and Fridovich-Keil, Sara and Raghavan, Nithin and Singhal, Utkarsh and Ramamoorthi, Ravi and Barron, Jonathan T and Ng, Ren},
	year         = {2020},
	journal      = {Advances in Neural Information Processing Systems},
	volume       = {33},
	pages        = {7537--7547}
}

@misc{wang_simulating_2025,
	title        = {Simulating {Three}-dimensional {Turbulence} with {Physics}-informed {Neural} {Networks}},
	author       = {Wang, Sifan and Sankaran, Shyam and Stinis, Panos and Perdikaris, Paris},
	year         = {2025},
	publisher    = {arXiv},
	doi          = {10.48550/arXiv.2507.08972}
}

@misc{yang_nearly_2023,
	title        = {Nearly {Optimal} {Approximation} {Rates} for {Deep} {Super} {ReLU} {Networks} on {Sobolev} {Spaces}},
	author       = {Yang, Yahong and Wu, Yue and Yang, Haizhao and Xiang, Yang},
	year         = {2023},
	month        = {10},
	publisher    = {arXiv},
	doi          = {10.48550/arXiv.2310.10766}
}

\end{document}